\title{\bf Generalized Poisson integral and sharp estimates for harmonic and biharmonic functions in the half-space}
\author{\sc{Gershon Kresin$^a\!\!$}
\thanks{e-mail: kresin@ariel.ac.il}$\;\;$
and \sc{Vladimir Maz'ya$^b$}
\thanks{e-mail: vladimir.mazya@liu.se}$\;\;$
\\ \\
{\it{$^a$Department of Mathematics, Ariel University, Ariel 40700, Israel}}\\
{\it{$^b$Department of Mathematical Sciences, University of Liverpool,
M$\&$O Building, Liverpool,}}\\ 
{\it{L69 3BX, UK; Department of Mathematics, Link\"oping University,SE-58183 Link\"oping,}}\\ 
{\it{{\hskip -145mm}Sweden}}
}
{ \date\ }
\numberwithin{equation}{section}
\newtheorem{lemma}{Lemma}
\newtheorem{theorem}{Theorem}
\newtheorem{proposition}[theorem]{Proposition}
\newtheorem{corollary}{Corollary}
\newenvironment{remark}{{\bf Remark}}
\newcommand{\bs}{\boldsymbol}
\begin{document}
\maketitle

{\bf Abstract.} A representation for the sharp coefficient in a pointwise estimate for the gradient 
of a generalized Poisson integral of a function $f$ on ${\mathbb R}^{n-1}$ is obtained under the assumption that
$f$ belongs to  $L^p$. It is assumed that the kernel of the integral depends on the parameters $\alpha$ and $\beta$.
The explicit formulas for the sharp coefficients are found for the cases $p=1$, $p=2$ and for some values of 
$\alpha , \beta$ in the case $p=\infty$. Conditions ensuring the validity of  some analogues of the Khavinson's 
conjecture for the  generalized Poisson integral are obtained. The sharp estimates are applied to harmonic and 
biharmonic functions in the half-space.
\\
\\
{\bf Keywords:} generalized Poisson integral, two-parametric kernel, sharp estimates, 
harmonic functions, biharmonic functions
\\
\\
{\bf 2010 Mathematics Subject Classification:} Primary: 31B10; Secondary: 31B05, 31B30
\\
\\
%%%%%%%%%%%%%%%%%%%%%%%%%%%%%%%%%%%%%%%%%%%%%%%%%%%%%%%%%%%%%%%%% 
\section{Background and main results}
%%%%%%%%%%%%%%%%%%%%%%%%%%%%%%%%%%%%%%%%%%%%%%%%%%%%%%%%%%%%%%%%%

In the paper \cite{KM2} (see also \cite{KM5}) a representation for the sharp coefficient
${\mathcal C}_p( x)$ in the inequality
$$
\left |\nabla v(x)\right |\leq {\mathcal C}_p( x)\big|\!\big |v \big |\!\big |_p
$$
was found, where $v$ is harmonic function in the half-space
${\mathbb R}^n _{+}=\big \lbrace x=(x', x_n): x'\in {\mathbb R}^{n-1}, x_n > 0\big \rbrace$,
represented by the Poisson integral with
boundary values in $L^p({\mathbb R}^{n-1})$,  $||\cdot ||_p$ is the norm in
$L^p({\mathbb R}^{n-1})$, $1\leq p \leq \infty$, $x \in {\mathbb R}^n _{+}$. It was shown that
$$
{\mathcal C}_p( x)=\frac{C_p}{x_{n}^{(n-1+p)/p}}
$$
and explicit formulas for $C_1$, $C_2$ and $C_\infty$ were found. Namely,
$$
C_1=\frac{2 (n-1)}{\omega_n}\;,\;\;\;\;\;\;\;\;\;C_2=\sqrt{\frac{(n-1)n}{2^n \omega_n}}\;,\;\;\;\;\;\;\;\;\;
C_\infty=\frac{4(n-1)^{(n-1)/2}\;\omega_{n-1}}{ n^{n/2}\;\omega_n}\;,
$$
where $\omega_n=2\pi^{n/2}/\Gamma(n/2)$ is the area of the unit sphere ${\mathbb S}^{n-1}$ in ${\mathbb R}^n$.

In \cite{KM2} it was shown that the sharp coefficients in pointwise estimates for the absolute value 
of the normal derivative and the modulus of the gradient of a harmonic function in the half-space 
coincide for the cases $p=1$, $p=2$ as well as for the case $p=\infty$. 

Similar results for the gradient and the radial derivative of a harmonic function in the multidimensional ball with 
boundary values from $L^p$ for $p=1, 2$ in \cite{KM3} were obtained.

We note that explicit sharp 
coefficients in the inequality for the first derivative of analytic function in the half-plane %%%and the disk 
with boundary values of the real-part from $L^p$ in \cite{KM4} were found.

The subjects of papers \cite{KM2,KM3} is closely connected with D. Khavinson problem \cite{KHAV} and conjecture 
(see \cite{KM5}, Ch. 6) for harmonic functions in a ball. The Khavinson's problem is to find the sharp coefficient in the inequality
\begin{equation} \label{EI_1K}
|\nabla  v(x)| \leq {\mathcal K}(x)\sup _{|y|<1}|v(y)|,
\end{equation}
where $v$ is a bounded harmonic function in the ball $B=\{x \in {\mathbb R}^3: |x|<1 \}$. The Khavinson's conjecture 
is that the sharp coefficient ${\mathcal K}(x)$ in (\ref{EI_1K}) and the sharp coefficient $K(x)$ 
in the inequality
$$
\left |\frac{\partial v(x)}{\partial |x|} \right |\leq K(x)\sup _{|y|<1}|v(y)|
$$
coincide for any $x \in B$.

Thus, the $L^1, L^2$-analogues of Khavinson's problem were solved in \cite{{KM2},{KM3}} 
for harmonic functions in the multidimensional half-space and the ball. Also, the $L^\infty$-analogue of Khavinson's problem
for harmonic functions in the multidimensional half-space in \cite{KM2} was solved. A similar remark concerns to the positive 
answers on aforementioned analogues of the Khavinson's conjecture for harmonic functions in the half-space and the ball. 

\smallskip
In this paper we treat a generalization of some problems considered in our work \cite{KM2}.
Here we consider the generalized Poisson integral in the half-space
\begin{equation} \label{EI_1}
u(x)=k \int _{{\mathbb R}^{n-1}}\left (\frac{x_{n}^\alpha}{|y-x|}\right )^\beta\;f(y')dy'
\end{equation}
with two parameters, $\alpha\geq 0$ and $\beta >(n-1)(p-1)/p$, where $k$ is a constant, $n>2$, $f\in L^p({\mathbb R}^{n-1})$, 
$1\leq  p\leq \infty $, $y=(y', 0)$, $y' \in {\mathbb R}^{n-1}$.  

In the case $\alpha=1/n, \beta=n, k=2/\omega_{n}$ integral (\ref{EI_1}) coincides 
with the Poisson integral for harmonic functions in the half-space.
If $k=2/((n-2)\omega_{n})$, $\alpha=0$ and $\beta=n-2$, then integral (\ref{EI_1}) gives solution 
of the Neumann problem for the half-space. Solution of the first boundary value problem for the biharmonic equation 
in the half-space is represented as the sum of two integrals (\ref{EI_1}) with $\alpha=3/(n+2), \beta=n+2$ and 
$\alpha=2/n, \beta=n$, accordingly. Integral (\ref{EI_1}) with $\alpha=0$, $\beta\in (0, n-1)$ with appropriate choice of $k$ 
can be considered as a continuation on ${\mathbb R}^n$ of the Riesz potential in ${\mathbb R}^{n-1}$. 

In the present paper we arrive at conditions for which some analogues of Khavinson's conjecture for the generalized Poisson integral 
in the half-space are valid.

\smallskip
In Section 2 we obtain a representation for the sharp coefficient ${\mathcal C}_{\alpha, \beta, p}(x)$ in the inequality
\begin{equation} \label{EI_2}
\left |\nabla u(x)\right |\leq {\mathcal C}_{\alpha, \beta, p}( x)\big|\!\big |f \big |\!\big |_p\;,
\end{equation}
where 
$$
{\mathcal C}_{\alpha, \beta, p}(x)=\frac{C_{\alpha, \beta, p}}{x_{n}^{2-n+\beta(1-\alpha)+((n-1)/p)}}\;.
$$
The constant $C_{\alpha, \beta, p}$ in this section is characterized in terms of an extremal problem on the sphere ${\mathbb S}^{n-1}$.

\smallskip
Analyzing this extremal problem for the case $p=1$, in Section 3 we derive the explicit formula for $C_{\alpha, \beta, 1}$
with $\beta>0$. It is shown that
$C_{\alpha, \beta, 1}=|k|\beta|1-\alpha|$ if $\alpha$ satisfies the condition  
$$
0\leq\alpha\leq\frac{\sqrt{1+\beta}}{\sqrt{1+\beta}+1}\;\;\;{\rm or}\;\;\;\alpha\geq\frac{\sqrt{1+\beta}}{\sqrt{1+\beta}-1}\;.
$$  
For these values of $\alpha$, the constant $C_{\alpha, \beta, 1}=|k|\beta|1-\alpha|$ is sharp also in the weaker inequality 
obtained from $(\ref{EI_2})$ with $p=1$ by replacing $\nabla u$ 
by $\partial u/\partial x_n$. Also, it is shown that
$$
C_{\alpha, \beta, 1}=|k|\beta\left ( \frac{\beta }{2\alpha-1} \right )^{\beta/2}\left ( \frac{\alpha^2}{1+\beta } \right )^{(\beta+2)/2} 
$$
if $\alpha$ satisfies the condition
$$
\frac{\sqrt{1+\beta}}{\sqrt{1+\beta}+1} < \alpha< \frac{\sqrt{1+\beta}}{\sqrt{1+\beta}-1}\;.
$$

\smallskip
In Section 4 we consider the case $\alpha=0$ in (\ref{EI_1}). Solving the extremal problem on ${\mathbb S}^{n-1}$ described in Section 2, 
we arrive at the explicit formula for the sharp coefficient ${\mathcal C}_{0, \beta, p}(x)$ in inequality (\ref{EI_2}) with $\alpha=0$. 
In particular, we obtain the sharp inequality 
\begin{equation} \label{EI_4}
\left |\nabla u (x) \right |\leq \frac{ C_{0, \beta, p}}{x_{n}^{2-n+\beta+((n-1)/p)}}\;
\big|\!\big |f \big |\!\big |_p
\end{equation}
for $\beta> n-1$ and $p\in [1, \infty ]$, where $C_{0, \beta, 1}=|k|\beta$ and
\begin{equation} \label{EI_5}
C_{0, \beta, p}=|k|\beta\left \{\frac{\pi^{\frac{n-1}{2}}\Gamma\left ( \frac{(\beta -n +3)p+n-1}{2(p-1)} \right )}
{\Gamma\left ( \frac{(\beta +2)p}{2(p-1)} \right )} \right \}^{\frac{p}{p-1}}
\end{equation}
for $p>1$. The constant (\ref{EI_5}) is sharp also in the weaker inequality 
obtained from $(\ref{EI_4})$ by replacing $\nabla u $ by $\partial u/\partial x_n$. 

\smallskip
In Section 5 we reduce the extremal problem on the sphere ${\mathbb S}^{n-1}$ from Section 2 to that of finding of the supremum
of a certain double integral, depending on a scalar parameter.

\smallskip
Using the representation for $C_{\alpha, \beta, p}$ as the supremum of the double integral with a scalar parameter from Section 5, in Section 6 we 
consider the case $p=2$. Here we obtain results similar to those of Section 3.

\smallskip
In Section 7 we deal with the case $p=\infty$ in (\ref{EI_2}). First, we show that for any $\beta>n-1$ there exists $\alpha_n(\beta)>1$
such that for $\alpha\geq \alpha_n(\beta)$ the equality holds
$$
C_{\alpha, \beta, \infty}=|k|\frac{\pi^{(n-1)/2}\;\Gamma\left ( \frac{\beta -n+1}{2}\right )}{
\Gamma\left ( \frac{\beta}{2}\right )}\big ( (\alpha-1)\beta + n-1 \big ).
$$
The number $\alpha_n(\beta)$ is a root of a transcendental equation. %%%We calculate it for some values of $n$ and $\beta$.
For instance, $\alpha_3(2.5)\approx 1.2865, \;\alpha_3(3)\approx 1.4101, \;\alpha_3(3.5)\approx 1.4788$. 
Second, we consider the case $\alpha=1$ separately and show that
$$
C_{1, \beta, \infty}=|k|\frac{\pi^{(n-1)/2}(n-1)\Gamma\left (\frac{\beta-n+1}{2} \right )}{\Gamma\left (\frac{\beta}{2} \right )}
$$
for $\beta \in (n-1, n]$. In each of two assertions of Section 7 we show that absolute value of the derivative of $u$
with respect to the normal to the boundary of the half-space at any point
$x \in {\mathbb R}^n_+$ has the same supremum as $|\nabla u(x)|$. 

\smallskip
In Section 8 we concretize the results of Sections 3, 4, 6, 7 to obtain sharp estimates for the gradient of $x_n^{n\kappa -1} v(x)$,
where $\kappa\geq 0$ and $v$ is a harmonic function in ${\mathbb R}^n_+$ which can be represented as the Poisson integral with boundary values from $L^p$. Here we give the explicit formulas for the sharp constant $C_{\kappa, n, p}$ in the estimate 
\begin{equation} \label{EI_A}
\left |\nabla \big ( x_n^{n\kappa - 1}v(x)\big )\right |\leq C_{\kappa, n, p}x_n^{n\kappa-2-(n-1)/p}\big|\!\big |v \big |\!\big |_p
\end{equation}
with some values of $\kappa$ and $p$. For instance, in the case $\kappa=0 $ we derive the inequality
\begin{equation} \label{EI_8A}
\left |\nabla \left \{ \frac{v (x)}{x_n} \right \}\right |
\leq C_{0, n, p} x_{n}^{-2-(n-1)/p}\;\big|\!\big | v \big |\!\big |_p
\end{equation}
with the sharp constant
$$
C_{0, n, p}=\frac{2 n}{\omega_n}\left \{\frac{\pi^{\frac{n-1}{2}}\Gamma\left ( \frac{3p+n-1}{2(p-1)} \right )}
{\Gamma\left ( \frac{(n +2)p}{2(p-1)} \right )} \right \}^{\frac{p}{p-1}}
$$
for $1< p< \infty$. For the cases $p=1, 2, \infty$ inequality (\ref{EI_8A}) becomes
$$
\left |\nabla \left \{ \frac{v (x)}{x_n} \right \}\right |
\leq \frac{2n}{\omega_n} \frac{1}{x_{n}^{n+1}}\;\big|\!\big | v \big |\!\big |_1\;,\;\;\;\;\;\;
\left |\nabla \left \{ \frac{v (x)}{x_n} \right \}\right |
\leq \sqrt{\frac{n(n+3)}{2^n\omega_n}} \frac{1}{x_{n}^{(n+3)/2}}\;\big|\!\big | v \big |\!\big |_2\;,\;\;\;\;\;\;
\left |\nabla \left \{ \frac{v (x)}{x_n} \right \}\right |
\leq \frac{1}{x_n^2} \;\big|\!\big | v \big |\!\big |_\infty\;,
$$
accordingly. We note, that the constants in inequality (\ref{EI_8A}) %%%and (\ref{EI_8B}) 
remain sharp also in the weaker inequalities obtained by replacing $\nabla $ by $\partial /\partial x_n$.

We mention one more group of  inequalities for harmonic functions with the sharp coefficients obtained for the case $\kappa=1$:
$$
\left |\nabla \big ( x_n^{n - 1}v(x)\big )\right |\leq \frac{2(n-2)}{n\omega_n}\left \{ \frac{(n-1)^2}{(n-2)(n+1)} \right \}^{(n+2)/2}\; \frac{1}{x_n} \big|\!\big | v \big |\!\big |_1\;,
$$ 
$$
\left |\nabla \big ( x_n^{n - 1}v(x)\big )\right |\leq \left \{ \frac{n(n-1)}{2^{n}\omega_n }\right \}^{1/2} 
x_n^{(n-3)/2} \big|\!\big | v \big |\!\big |_2\;,\;\;\;\;\;\;\;
\left |\nabla \big ( x_n^{n - 1}v(x)\big )\right |\leq (n-1) x_n^{n-2} \big|\!\big | v \big |\!\big |_\infty\;.
$$

\smallskip
Concluding of Section 8, we present the sharp estimate 
\begin{equation} \label{EI_8w}
\left |\nabla \big ( x_n^{n\kappa - 2}w_0(x)\big )\right |\leq C_{\kappa, n, p}x_n^{n\kappa-2-(n-1)/p}\left |\!\left |
\frac{\partial w_0}{\partial x_n} \right |\!\right |_p\;,
\end{equation}
where $\kappa\geq 0$ and $w_0$ is a biharmonic function in ${\mathbb R}^n_+$ with the boundary values 
$$
w_0\big |_{x_n=0}=0\;,\;\;\;\;\;\;\;\;\frac{\partial w_0}{\partial x_n}\Big |_{x_n=0} \in L^p({\mathbb R}^{n-1})\;.
$$
The sharp constant $C_{\kappa, n, p}$ in inequality (\ref{EI_8w}) is the same as in (\ref{EI_A}). 
For example, in the case $\kappa=0, p=\infty$, inequality (\ref{EI_8w}) takes the form
$$
\left |\nabla \left \{ \frac{w_0 (x)}{x_n^2} \right \}\right |
\leq \frac{1}{x_n^2} \;\left |\!\left |
\frac{\partial w_0}{\partial x_n} \right |\!\right |_\infty\;.
$$

%%%%%%%%%%%%%%%%%%%%%%%%%%%%%%%%%%%%%%%%%%%%%%%%%%%%%%%%%%%%%%%%%
\section{Representation for the sharp constant in inequality for 
the gradient in terms of an extremal problem on the unit sphere}
%%%%%%%%%%%%%%%%%%%%%%%%%%%%%%%%%%%%%%%%%%%%%%%%%%%%%%%%%%%%%%%%%

We introduce some notation used henceforth.
Let ${\mathbb R}^{n} _{+}=\big \lbrace x=(x', x_{n}): x'=(x_1,\dots,x_{n-1})
\in {\mathbb R}^{n-1}, x_{n} > 0\big \rbrace$, ${\mathbb S}^{n-1}=
\{ x \in {\mathbb R}^{n}: |x|=1 \}$,
${\mathbb S}^{n-1}_+= \{ x \in {\mathbb R}^{n}: |x|=1,\; x_{n} >0 \}$ and
${\mathbb S}^{n-1}_{-}= \{ x \in {\mathbb R}^{n}: |x|=1,\; x_{n}<0 \}$.
Let $\bs e_\sigma $ stand for the
$n$-dimensional unit vector joining the origin to a point $\sigma $
on the sphere ${\mathbb S}^{n-1}$. 
As before, by $\omega_n=2\pi ^{n/2}/\Gamma(n/2)$ we 
denote the area of the unit sphere in ${\mathbb R}^n$. Let $\bs e_n$ be the unit vector of the $n$-th coordinate axis.

By $||\cdot ||_p$ we denote the norm in the space $L^p({\mathbb R}^{n-1})$, that is
$$
|| f||_p=\left \{\int _ {{\mathbb R}^{n-1}} |f(x')|^p \;dx' \right \}^{1/p},
$$
if $1\leq  p< \infty $, and $|| f||_\infty =\mbox{ess}\;\sup \{ | f(x') |:
x' \in {\mathbb R}^{n-1} \}$.

Let the function $u$ in ${\mathbb R}^{n}_{+}$ be represented as the generalized Poisson integral
\begin{equation} \label{EH_1}
u(x)=k \int _{{\mathbb R}^{n-1}}\left (\frac{x_{n}^\alpha}{|y-x|}\right )^\beta\;f(y')dy'
\end{equation}
with parameters $\alpha\geq 0$ and
\begin{equation} \label{EQU_1}
\beta >(n-1)(p-1)/p\;, 
\end{equation}
where $k$ is a constant, $f\in L^p({\mathbb R}^{n-1})$, $1\leq  p\leq \infty $, $y=(y', 0)$, $y' \in {\mathbb R}^{n-1}$.

\smallskip
Now, we find a representation for the best coefficient ${\mathcal C}_p( x; \bs z)$
in the inequality for the absolute value of derivative of $u(x)$ 
in an arbitrary direction $\bs z \in {\mathbb S}^{n-1}$, $x\in {\mathbb R}^{n}_{+}$.
In particular, we obtain a formula for the sharp coefficient in a similar inequality for the
modulus of the gradient.

\begin{proposition} \label{P_1}
Let $x $ be  an arbitrary point in ${\mathbb R}^{n} _+$ and let $\bs z\in {\mathbb S}^{n-1}$. 
The sharp coefficient ${\mathcal C}_{\alpha, \beta, p} (x; \bs z)$ in the inequality
\begin{equation} \label{EH_2A1}
|\left ( \nabla u(x), \bs z \right ) |\leq {\mathcal C}_{\alpha, \beta, p}( x; \bs z)
\big|\!\big |f \big |\!\big |_p
\end{equation}
is given by
\begin{equation} \label{EH_2A}
{\mathcal C}_{\alpha, \beta, p}( x; \bs z)= \frac{C_{\alpha, \beta, p}(\bs z)}{x_{n}^{2-n+\beta(1-\alpha)+((n-1)/p)}}\;,
\end{equation}
where
\begin{equation} \label{EH_2B}
C_{\alpha, \beta, 1}(\bs z)=|k|\beta\sup _{\sigma \in {\mathbb S}^{n-1}_+}
\big |\big (\alpha\bs e_{n} -(\bs e_{\sigma}, \bs e_{n})\bs e_{\sigma},\; \bs z \big )\big |
\big (\bs e_{\sigma}, \bs e_{n} \big )^\beta ,
\end{equation}
\begin{equation} \label{EH_3DA}
C_{\alpha, \beta, p}(\bs z)\! =\!|k|\beta
\left \{ \int _ {{\mathbb S}^{n-1}_+ }\!\!
\big |\!\big (\alpha\bs e_{n} \!-\!(\bs e_{\sigma}, \bs e_{n})\bs e_{\sigma}, \bs z \big )\!\big |^{\frac{p}{p\!-\!1}}
\big (\bs e_{\sigma}, \bs e_{n} \big )^{\frac{(\beta - n)p+n}{p-1}}d\sigma \!\right \}^{\frac{p\!-\!1}{p}}
\end{equation}
for $1<p<\infty $,
and
\begin{equation} \label{EH_3H}
C_{\alpha, \beta, \infty}(\bs z)=|k|\beta 
\int _ {{\mathbb S}^{n-1}_+ }
\big |\big (\alpha\bs e_{n} - (\bs e_{\sigma}, \bs e_{n})\bs e_{\sigma},\; 
\bs z \big )\big |
\big (\bs e_{\sigma}, \bs e_{n} \big )^{\beta-n}\;d\sigma .
\end{equation}

In particular, the sharp coefficient ${\mathcal C}_{\alpha, \beta, p} (x)$ in the inequality
\begin{equation} \label{EH_2A2}
\left |\nabla u(x)\right |\leq {\mathcal C}_{\alpha, \beta, p}( x)\big|\!\big |f \big |\!\big |_p
\end{equation}
is given by
\begin{equation} \label{EH_3J}
{\mathcal C}_{\alpha, \beta, p}(x)=\frac{C_{\alpha, \beta, p} }{x_{n}^{2-n+\beta(1-\alpha)+((n-1)/p)}}\;,
\end{equation}
where
\begin{equation} \label{EH_4J}
C_{\alpha, \beta, p}=\sup _{|\bs z|=1}C_{\alpha, \beta, p}(\bs z).
\end{equation}
\end{proposition}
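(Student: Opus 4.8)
The plan is to compute the directional derivative $(\nabla u(x), \bs z)$ explicitly by differentiating under the integral sign in \eqref{EH_1}, then recognize the result as a linear functional on $f \in L^p$ whose operator norm is, by duality, the $L^{p/(p-1)}$-norm of the kernel (or its supremum for $p=1$). The key preliminary computation is $\nabla_x (x_n^\alpha / |y-x|)^\beta$. Writing $r = |y-x|$, we have $\nabla_x (x_n^{\alpha\beta} r^{-\beta}) = \alpha\beta x_n^{\alpha\beta - 1} r^{-\beta} \bs e_n + x_n^{\alpha\beta}(-\beta) r^{-\beta-2}(x-y)$. Factoring out $\beta x_n^{\alpha\beta-1} r^{-\beta}$ and pairing with $\bs z$, the integrand for $(\nabla u(x),\bs z)$ becomes
\[
k\beta\, \frac{x_n^{\alpha\beta-1}}{r^{\beta}} \left( \alpha\bs e_n - \frac{x_n}{r^2}(x-y),\ \bs z \right) f(y').
\]
So the sharp coefficient is $|k|\beta$ times the $L^{p/(p-1)}(\mathbb R^{n-1})$-norm in $y'$ of the kernel $K(y') = x_n^{\alpha\beta-1} r^{-\beta}\bigl(\alpha\bs e_n - x_n r^{-2}(x-y), \bs z\bigr)$, with the usual convention that for $p=1$ this is the essential supremum and for $p=\infty$ the $L^1$-norm. (One should check, using hypothesis \eqref{EQU_1}, that this norm is finite, so differentiation under the integral is justified and the functional is bounded; attainment of the norm in the $L^1$ and $L^\infty$ cases requires the standard approximate-identity / sign-choosing argument, and for $1<p<\infty$ the extremal $f$ is the appropriate power of $K$.)

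Next I would pass from the integral over $\mathbb R^{n-1}$ to an integral over the hemisphere ${\mathbb S}^{n-1}_+$ by the substitution that sends $y' \in \mathbb R^{n-1}$ to the point $\sigma \in {\mathbb S}^{n-1}_+$ where the ray from $x$ through $(y',0)$ meets the unit sphere centered at $x$. Concretely, $x - y = r\,\bs e_\sigma$ with $\bs e_\sigma \in {\mathbb S}^{n-1}_+$ (note $x_n > 0$ forces the $n$-th component positive), and $x_n = r\,(\bs e_\sigma, \bs e_n)$, so $r = x_n/(\bs e_\sigma, \bs e_n)$. Under this map the kernel's vector part becomes $x_n^{\alpha\beta-1} r^{-\beta}\bigl(\alpha\bs e_n - (\bs e_\sigma,\bs e_n)\bs e_\sigma,\ \bs z\bigr)$, which already displays the homogeneity structure of \eqref{EH_2B}–\eqref{EH_3H}. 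The Jacobian of the change of variables $y' \mapsto \sigma$ is the classical one for central projection of a hyperplane onto a sphere: $dy' = (r^{n-1}/(\bs e_\sigma,\bs e_n))\, d\sigma = x_n^{n-1}(\bs e_\sigma, \bs e_n)^{-n}\, d\sigma$. Substituting $r = x_n/(\bs e_\sigma,\bs e_n)$ everywhere and collecting all powers of $x_n$ and of $(\bs e_\sigma,\bs e_n)$ is then bookkeeping: for the $L^{p/(p-1)}$ case the integrand carries $r^{-\beta \cdot p/(p-1)}$ from the kernel times $r^{n-1}/(\bs e_\sigma,\bs e_n)$ from the Jacobian, giving the exponent $\frac{(\beta-n)p+n}{p-1}$ on $(\bs e_\sigma,\bs e_n)$ as in \eqref{EH_3DA} after raising to the power $(p-1)/p$, while the overall power of $x_n$ works out to the exponent in \eqref{EH_2A}. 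The cases $p=1$ and $p=\infty$ are the limiting forms: for $p=1$ the norm is the sup over $\sigma$, which is independent of the Jacobian, yielding \eqref{EH_2B}; for $p=\infty$ the exponent on $(\bs e_\sigma,\bs e_n)$ collapses to $\beta - n$, yielding \eqref{EH_3H}.

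Finally, the "in particular" claim \eqref{EH_2A2}–\eqref{EH_4J} follows immediately: $|\nabla u(x)| = \sup_{|\bs z|=1} |(\nabla u(x), \bs z)|$, and since \eqref{EH_2A1} is sharp for each fixed $\bs z$ with the same power of $x_n$ in every case (the exponent $2-n+\beta(1-\alpha)+(n-1)/p$ does not depend on $\bs z$), taking the supremum over $\bs z$ commutes with the displayed formula and produces \eqref{EH_3J} with $C_{\alpha,\beta,p} = \sup_{|\bs z|=1} C_{\alpha,\beta,p}(\bs z)$. I expect the main obstacle to be the careful justification that the coefficient is genuinely \emph{sharp} rather than merely an upper bound: one must exhibit (or take a limit of) functions $f$ in the unit ball of $L^p$ for which the ratio $|(\nabla u(x),\bs z)|/\|f\|_p$ approaches $C_{\alpha,\beta,p}(\bs z)/x_n^{\cdots}$. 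For $1<p<\infty$ this is the standard Hölder equality case — take $f$ proportional to $\operatorname{sgn}(K)\,|K|^{1/(p-1)}$, appropriately truncated and normalized — and for $p=1,\infty$ it requires concentrating $f$ near the point (resp. direction) where $|K|$ attains its supremum, together with verifying that the supremum over $\sigma\in{\mathbb S}^{n-1}_+$ in \eqref{EH_2B} is actually attained (continuity of the integrand and compactness of the closed hemisphere, minding the boundary behavior as $(\bs e_\sigma,\bs e_n)\to 0$, which is controlled by $\beta>0$). A secondary point needing care is the interchange of differentiation and integration, which is legitimate precisely because condition \eqref{EQU_1} makes the differentiated kernel lie in $L^{p/(p-1)}$ locally uniformly in $x$.
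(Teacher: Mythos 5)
Your proposal is correct and follows essentially the same route as the paper: differentiate under the integral sign, identify the sharp coefficient as the dual $L^{p/(p-1)}$-norm (resp.\ sup for $p=1$) of the kernel, and pass to the hemisphere by central projection using $r=x_n/(\bs e_\sigma,\bs e_n)$ and the Jacobian $dy'=r^{n-1}(\bs e_\sigma,\bs e_n)^{-1}d\sigma$ (the paper packages the same computation via the identity $x_n/|y-x|=(\bs e_{xy},-\bs e_n)$ and the solid-angle element $x_n\,dy'/|y-x|^n=d\sigma$, then reflects $\bs e_\sigma\mapsto-\bs e_\sigma$). The exponent bookkeeping you describe reproduces \eqref{EH_2A}--\eqref{EH_3H} exactly, and the final supremum over $\bs z$ is handled identically.
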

\begin{proof} Let $x=(x', x_{n})$ be a fixed point in ${\mathbb R}^{n}_+$. 
The representation (\ref{EH_1}) implies
$$
\frac{\partial u}{\partial x_i}=k\int _{{\mathbb R}^{n-1}}
\left [ \frac{\alpha\beta\delta_{n i} x_{n}^{\alpha\beta-1}}{|y-x|^\beta} + \frac{\beta 
x_{n} ^{\alpha\beta}(y_i-x_i)}{|y-x|^{\beta+2}}  \right ]f(y')dy',
$$
that is
\begin{eqnarray*}
\nabla u(x)&=&k \beta x_{n}^{\alpha\beta-1}\int _{{\mathbb R}^{n-1}}
\left [\; \frac{\alpha\bs e_{n}}{|y-x|^{\beta}} + \frac{ x_{n} (y-x)}{|y-x|^{\beta+2}}\; \right ]f(y')dy'\\
& &\\
&=&k \beta x_{n}^{\alpha\beta-1}\int _{{\mathbb R}^{n-1}}
\frac{\alpha\bs e_{n}  - (\bs e_{xy}, \bs e_{n}) \bs e_{xy}}{|y-x|^{\beta}}\; f(y')dy',
\end{eqnarray*}
where $\bs e_{xy}=(y-x)|y-x|^{-1}$.
For any $\bs z \in {\mathbb S}^{n-1}$,
$$
(\nabla u(x), \bs z)=
k \beta x_{n}^{\alpha\beta-1}\int _{{\mathbb R}^{n-1}}
\frac{(\alpha\bs e_{n}  - (\bs e_{xy}, \bs e_{n})\bs e_{xy},\; \bs z)}{|y-x|^{\beta}}\; f(y')dy'.
$$
Hence,
\begin{equation} \label{EH_7AD}
{\mathcal C}_{\alpha, \beta, 1}( x; \bs z)=|k| \beta x_{n}^{\alpha\beta-1}
\sup _{y \in {\partial\mathbb R}^{n-1}}
\frac{|(\alpha\bs e_{n}  - (\bs e_{xy}, \bs e_{n})\bs e_{xy},\; \bs z)|}{|y-x|^{\beta}}\;,
\end{equation}
and
\begin{equation} \label{EH_6A}
{\mathcal C}_{\alpha, \beta, p}( x; \bs z)=|k| \beta x_{n}^{\alpha\beta-1}
\left \{\int _{{\mathbb R}^{n-1}}
\frac{\big |\big (\alpha\bs e_{n}  - (\bs e_{xy}, \bs e_{n})\bs e_{xy}, 
\bs z \big )\big |^q }{|y-x|^{\beta q}}\;dy' \right \}^{1/q}
\end{equation}
for $1<p \leq \infty $, where $p^{-1}+q^{-1}=1$.

Taking into account the equality
\begin{equation} \label{EH_COS}
\frac{x_{n}}{|y-x|}=(\bs e_{xy}, -\bs e_{n}),
\end{equation}
by (\ref{EH_7AD}) we obtain
\begin{eqnarray*}
{\mathcal C}_{\alpha, \beta, 1}( x; \bs z)
&=&|k| \beta x_{n}^{\alpha\beta-1}\sup _{y \in \partial{\mathbb R}^{n-1}}
\frac{|(\alpha\bs e_{n}  - (\bs e_{xy}, \bs e_{n})\bs e_{xy},\; \bs z)|}{x_{n}^{\beta}}
\left (\frac{x_{n}}{|y-x|}\right )^{\beta} \\
& &\\
&=&\frac{|k| \beta}{x_{n}^{1+\beta(1-\alpha)}}\sup _{\sigma \in {\mathbb S}^{n-1}_-}
\big |\big (\alpha\bs e_{n} -(\bs e_{\sigma}, \bs e_{n})\bs e_{\sigma},\; \bs z \big )\big |\big (\bs e_{\sigma}, -\bs e_{n} 
\big )^{\beta}.
\end{eqnarray*}
Replacing here $\bs e_\sigma$ by $-\bs e_\sigma$, we arrive at (\ref{EH_2A})
for $p=1$  with the sharp constant (\ref{EH_2B}).

Let $1<p \leq \infty $. Using (\ref{EH_COS}) and the equality
$$
\frac{1}{|y-x|^{\beta q}}=\frac{1}{x_{n}^{\beta q-n+1}}\left (\frac{x_{n}}{|y-x|} \right )^{\beta q-n}
\frac{x_n}{|y-x|^{n}}\;,
$$
and replacing  $q$ by $p/(p-1)$ in (\ref{EH_6A}), we conclude that (\ref{EH_2A}) holds
with the sharp constant
$$
C_{\alpha, \beta, p}(\bs z)=|k|\beta
\left \{ \int _ {{\mathbb S}^{n-1}_- }
\big |\big (\alpha\bs e_{n} -(\bs e_{\sigma}, \bs e_{n})\bs e_{\sigma},\; \bs z \big )\big |^{\frac{p}{p-1}}
\big (\bs e_{\sigma}, -\bs e_{n} \big )^{\frac{(\beta -n )p+n}{p-1}}\;d\sigma \right \}^{\frac{p-1}{p}},
$$
where ${\mathbb S}^{n-1}_- =\{ \sigma \in {\mathbb S}^{n-1}: (\bs e_\sigma , \bs e_{n})<0 \}$.
Replacing here $\bs e_\sigma$ by $-\bs e_\sigma$, 
we arrive at (\ref{EH_3DA}) for $1<p<\infty$ and at (\ref{EH_3H}) for $p=\infty$.

Estimate (\ref{EH_2A2}) with the sharp coefficient (\ref{EH_3J}), where the constant $C_{\alpha, \beta, p}$ is given by (\ref{EH_4J}),
is an immediate consequence of (\ref{EH_2A1}) and (\ref{EH_2A}).
\end{proof}

\begin{remark}. 
Formula (\ref{EH_3DA}) for the sharp constant $C_{\alpha, \beta, p}(\bs z)$ in (\ref{EH_2A}), $1<p< \infty$,
can be written with the integral over the whole sphere ${\mathbb S}^{n-1}$
in ${\mathbb R}^{n}$,
\begin{equation} \label{EH_3HW1}
C_{\alpha, \beta, p}(\bs z)=\frac{|k|\beta}{2^{(p-1)/p}}\;
\left \{ \int _ {{\mathbb S}^{n-1}}\!\!
\big |\!\big (\alpha\bs e_{n} \!-\!(\bs e_{\sigma}, \bs e_{n})\bs e_{\sigma}, \bs z \big )\!\big |^{\frac{p}{p\!-\!1}}
\big |\big (\bs e_{\sigma}, \bs e_{n} \big )\big |^{\frac{(\beta - n)p+n}{p-1}}d\sigma \!\right \}^{\frac{p\!-\!1}{p}}\;.
\end{equation}
A similar remark relates (\ref{EH_2B}):
\begin{equation} \label{EH_3HW3}
C_{\alpha, \beta, 1}(\bs z)=|k|\beta\sup _{\sigma \in {\mathbb S}^{n-1}}
\big |\big (\alpha\bs e_{n} -(\bs e_{\sigma}, \bs e_{n})\bs e_{\sigma},\; 
\bs z \big )\big |\big |\big (\bs e_{\sigma}, \bs e_{n} \big )\big |^\beta\;,
\end{equation}
as well as formula (\ref{EH_3H}):
$$
C_{\alpha, \beta, \infty}(\bs z)=\frac{|k|\beta}{2}
\int _ {{\mathbb S}^{n-1}}
\big |\big (\alpha\bs e_{n} - (\bs e_{\sigma}, \bs e_{n})\bs e_{\sigma},\; \bs z \big )\big |
\big |\big (\bs e_{\sigma}, \bs e_{n} \big )\big |^{\beta-n}\;d\sigma\;. 
$$
\end{remark}

%%%%%%%%%%%%%%%%%%%%%%%%%%%%%%%%%%%%%%%%%%%%%%%%%%%%%%%%%%%%%%%%%
\section{The case $p=1$} 
%%%%%%%%%%%%%%%%%%%%%%%%%%%%%%%%%%%%%%%%%%%%%%%%%%%%%%%%%%%%%%%%%

In the next assertion we obtain the explicit formula for the sharp constant $C_{\alpha, \beta, 1}$.

\setcounter{theorem}{0}
\begin{theorem} \label{C_1_1} Let $f \in L^1({\mathbb R}^{n-1})$,
and let $x $ be  an arbitrary point in ${\mathbb R}^{n} _+$. 
The sharp coefficient ${\mathcal C}_{\alpha, \beta, 1} (x)$ in the inequality
\begin{equation} \label{Eq_C_1.1}
\left |\nabla u(x)\right |\leq {\mathcal C}_{\alpha, \beta, 1}( x)\big|\!\big |f \big |\!\big |_1
\end{equation}
is given by
\begin{equation} \label{Eq_C_1.2}
{\mathcal C}_{\alpha, \beta, 1}(x)=\frac{C_{\alpha, \beta, 1}}{x_{n}^{1+\beta(1-\alpha)}}\;,
\end{equation}
where
\begin{equation} \label{EH_2BAA}
C_{\alpha, \beta, 1}=|k|\beta|1-\alpha|
\end{equation}
if 
\begin{equation} \label{EH_3AA}
0\leq\alpha\leq\frac{\sqrt{1+\beta}}{\sqrt{1+\beta}+1}\;\;\;{\rm or}\;\;\;\alpha\geq\frac{\sqrt{1+\beta}}{\sqrt{1+\beta}-1}\;,
\end{equation}
and
\begin{equation} \label{EH_4AA}
C_{\alpha, \beta, 1}=|k|\beta\left ( \frac{\beta }{2\alpha-1} \right )^{\beta/2}\left ( \frac{\alpha^2}{1+\beta } \right )^{(\beta+2)/2} 
\end{equation}
if
\begin{equation} \label{EH_5AA}
\frac{\sqrt{1+\beta}}{\sqrt{1+\beta}+1} < \alpha< \frac{\sqrt{1+\beta}}{\sqrt{1+\beta}-1}\;.
\end{equation}

If $\alpha$ satisfies condition $(\ref{EH_3AA})$, then the coefficient ${\mathcal C}_{\alpha, \beta, 1}( x)$ 
is sharp also in the weaker inequality obtained from $(\ref{Eq_C_1.1})$ by 
replacing $\nabla u$ by $\partial u /\partial x_n$.
\end{theorem}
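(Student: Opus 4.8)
By Proposition \ref{P_1} with $p=1$, the constant $C_{\alpha,\beta,1}$ equals $\sup_{|\bs z|=1}C_{\alpha,\beta,1}(\bs z)$, where $C_{\alpha,\beta,1}(\bs z)$ is given by (\ref{EH_2B}). So the whole statement reduces to evaluating
$$
C_{\alpha,\beta,1}=|k|\beta\sup_{|\bs z|=1}\ \sup_{\sigma\in{\mathbb S}^{n-1}_+}\bigl|\bigl(\alpha\bs e_n-(\bs e_\sigma,\bs e_n)\bs e_\sigma,\ \bs z\bigr)\bigr|\,(\bs e_\sigma,\bs e_n)^\beta .
$$
The first step is to dispose of the $\bs z$-supremum: for fixed $\sigma$ the inner expression is maximized over unit $\bs z$ by choosing $\bs z$ parallel to the vector $\bs a(\sigma):=\alpha\bs e_n-(\bs e_\sigma,\bs e_n)\bs e_\sigma$, giving $|\bs a(\sigma)|$. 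Hence $C_{\alpha,\beta,1}=|k|\beta\sup_{\sigma\in{\mathbb S}^{n-1}_+}|\bs a(\sigma)|\,(\bs e_\sigma,\bs e_n)^\beta$. Writing $t=(\bs e_\sigma,\bs e_n)\in(0,1]$, a direct computation gives $|\bs a(\sigma)|^2=\alpha^2-2\alpha t^2+t^2=\alpha^2-(2\alpha-1)t^2$ (using $|\bs e_\sigma|=1$), which depends on $\sigma$ only through $t$. Therefore the problem collapses to the one-variable maximization
$$
C_{\alpha,\beta,1}=|k|\beta\,\sup_{0<t\le 1}\ \Phi(t),\qquad \Phi(t):=\sqrt{\alpha^2-(2\alpha-1)t^2}\;t^{\beta},
$$
and one must check that every value $t\in(0,1]$ is actually attained by some $\sigma\in{\mathbb S}^{n-1}_+$ (it is, since $n>2$).

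The second step is the calculus. It is cleaner to maximize $\Psi(t):=\Phi(t)^2=\bigl(\alpha^2-(2\alpha-1)t^2\bigr)t^{2\beta}$ on $(0,1]$. Differentiating, $\Psi'(t)=2t^{2\beta-1}\bigl[\beta\alpha^2-(\beta+1)(2\alpha-1)t^2\bigr]$, so the only interior critical point is at $t_*^2=\dfrac{\beta\alpha^2}{(\beta+1)(2\alpha-1)}$, valid only when $2\alpha-1>0$ and $t_*^2\le 1$. One then distinguishes cases according to the sign of $2\alpha-1$ and the location of $t_*$:
\begin{itemize}
\item If $2\alpha-1\le 0$ (i.e. $0\le\alpha\le 1/2$), then $\Psi$ is nondecreasing on $(0,1]$, so the max is at $t=1$, giving $\Phi(1)=\sqrt{\alpha^2-(2\alpha-1)}=\sqrt{(1-\alpha)^2}=|1-\alpha|$.
\item If $2\alpha-1>0$ but $t_*^2\ge 1$, i.e. $\beta\alpha^2\ge(\beta+1)(2\alpha-1)$, then again $\Psi$ increases on $(0,1]$ and the max is $\Phi(1)=|1-\alpha|$.
\item If $2\alpha-1>0$ and $t_*^2<1$, the interior critical point gives the max, and substituting $t_*$ into $\Phi$ yields exactly (\ref{EH_4AA}):
$\Phi(t_*)=\bigl(\tfrac{\beta}{2\alpha-1}\bigr)^{\beta/2}\bigl(\tfrac{\alpha^2}{1+\beta}\bigr)^{(\beta+2)/2}$ after simplifying $\alpha^2-(2\alpha-1)t_*^2=\alpha^2/(\beta+1)$.
\end{itemize}
The algebra of the quadratic inequality $\beta\alpha^2\ge(\beta+1)(2\alpha-1)$ is routine: rewrite it as $\beta\alpha^2-2(\beta+1)\alpha+(\beta+1)\ge 0$; its roots are $\alpha_\pm=\dfrac{(\beta+1)\pm\sqrt{(\beta+1)^2-\beta(\beta+1)}}{\beta}=\dfrac{(\beta+1)\pm\sqrt{\beta+1}}{\beta}$, which after rationalizing become precisely $\dfrac{\sqrt{1+\beta}}{\sqrt{1+\beta}+1}$ and $\dfrac{\sqrt{1+\beta}}{\sqrt{1+\beta}-1}$. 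Together with the $0\le\alpha\le 1/2$ case (which is absorbed into the first of these two intervals since $1/2<\sqrt{1+\beta}/(\sqrt{1+\beta}+1)$), this reproduces exactly the dichotomy (\ref{EH_3AA})/(\ref{EH_5AA}).

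The third step handles the last sentence — that under (\ref{EH_3AA}) the constant stays sharp for $\partial u/\partial x_n$ alone. Here $\bs z=\bs e_n$, so by (\ref{EH_2B}) one needs $\sup_{\sigma\in{\mathbb S}^{n-1}_+}\bigl|\bigl(\alpha\bs e_n-(\bs e_\sigma,\bs e_n)\bs e_\sigma,\bs e_n\bigr)\bigr|(\bs e_\sigma,\bs e_n)^\beta=\sup_{0<t\le1}|\alpha-t^2|\,t^{\beta}$, and the claim is that this equals $|1-\alpha|$, attained at $t=1$. Under (\ref{EH_3AA}) we have $\alpha\le t_*$-type bounds forcing $|\alpha-t^2|t^\beta$ to be maximized at the endpoint; more directly one checks $|\alpha-t^2|t^\beta\le\Phi(t)\le\Phi(1)=|1-\alpha|$ for all $t\in(0,1]$ whenever (\ref{EH_3AA}) holds, because $|\alpha-t^2|\le\sqrt{\alpha^2-(2\alpha-1)t^2}$ is equivalent to $t^2(1-t^2)\ge0$, true for $t\in(0,1]$. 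Since equality holds at $t=1$, the full-gradient extremal $\bs z=\bs e_n$ is admissible, so the coefficient is unchanged.

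**Main obstacle.** The genuinely delicate point is not any single calculation but organizing the case analysis so that the algebraic boundary conditions (the thresholds $t_*^2=1$ and $2\alpha-1=0$) line up cleanly with the stated intervals (\ref{EH_3AA}), (\ref{EH_5AA}); in particular one must verify that the quadratic roots $\alpha_\pm$ rationalize to the advertised surd expressions and that the half-integer exponents in (\ref{EH_4AA}) emerge correctly from $\Phi(t_*)$. One should also be careful that no further local maximum is introduced at $t=1$ when $t_*$ is interior (it is not, since $\Psi$ is then decreasing past $t_*$), and that the supremum over $\sigma\in{\mathbb S}^{n-1}_+$ is a genuine maximum — attained because the map $\sigma\mapsto(\bs e_\sigma,\bs e_n)$ is onto $(0,1]$ for $n>2$ and $\Phi$ is continuous on $(0,1]$ with $\Phi(0^+)=0$.
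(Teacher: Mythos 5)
Your proposal is correct and follows essentially the same route as the paper: reduce to $\sup_{\sigma}|\alpha\bs e_n-(\bs e_\sigma,\bs e_n)\bs e_\sigma|(\bs e_\sigma,\bs e_n)^\beta$ by permuting the two suprema, pass to the one-variable function $\bigl(\alpha^2+(1-2\alpha)t^2\bigr)t^{2\beta}$, and carry out the same case analysis on the interior critical point, with the same rationalized roots and the same endpoint comparison $|\alpha-t^2|\le\sqrt{\alpha^2-(2\alpha-1)t^2}$ for the normal-derivative claim. The only cosmetic difference is that you detect the interior maximum from the sign change of $\Psi'$ rather than from the second-derivative test used in the paper.
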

\begin{proof} The equality (\ref{Eq_C_1.2}) for the sharp coefficient ${\mathcal C}_{\alpha, \beta, 1}(x)$ in (\ref{Eq_C_1.1}) 
was proved in Proposition \ref{P_1}. Using (\ref{EH_2B}), (\ref{EH_4J}) and the permutability of two suprema, we find
\begin{eqnarray} \label{EH_2BP}
C_{\alpha, \beta, 1}=|k|\beta\sup_{|\bs z|=1}\sup _{\sigma \in {\mathbb S}^{n-1}_+ }
\big |\big (\alpha\bs e_n -(\bs e_{\sigma}, \bs e_n)\bs e_{\sigma},\; \bs z \big )\big |\big (\bs e_{\sigma}, \bs e_n \big )^\beta 
=|k|\beta\sup _{\sigma \in {\mathbb S}^{n-1}_+ }
\big |\alpha\bs e_n -(\bs e_{\sigma}, \bs e_n)\bs e_{\sigma}\big |\big (\bs e_{\sigma}, \bs e_n \big )^\beta  \;.
\end{eqnarray}
Taking into account the equality
\begin{eqnarray*}
\big |\alpha\bs e_n -n(\bs e_{\sigma}, \bs e_n)\bs e_{\sigma}\big |
=\Big (\alpha\bs e_n -(\bs e_{\sigma}, \bs e_n)\bs e_{\sigma}, 
\;\alpha\bs e_n -(\bs e_{\sigma}, \bs e_n)\bs e_{\sigma} \Big )^{1/2}
=\Big (\alpha^2+(1-2\alpha)(\bs e_{\sigma},\; \bs e_n)^2 \Big )^{1/2},
\end{eqnarray*}
and using (\ref{EH_2BP}), we arrive at the representation
\begin{equation} \label{EH_2BPA}
C_{\alpha, \beta, 1}=|k|\beta\sup _{\sigma \in {\mathbb S}^{n-1}_+ }\Big (\alpha^2+(1-2\alpha)(\bs e_{\sigma},\; \bs e_n)^2 \Big )^{1/2}
\big (\bs e_{\sigma}, \bs e_n \big )^\beta \;.
\end{equation}

We denote $t=\big (\bs e_{\sigma}, \bs e_n \big )$. Let us introduce the function
\begin{equation} \label{EH_1BPAB}
f(t)=\big ( \alpha^2+(1-2\alpha)t^2\big )^{1/2}t^{\beta},
\end{equation}
where $t\in [0, 1]$, $\alpha\geq 0$ and $\beta>0$. By (\ref{EH_2BPA}),
\begin{equation} \label{EH_2BPAB}
C_{\alpha, \beta, 1}=|k|\beta\max _{0\leq t\leq 1}f(t)\;.
\end{equation}
Taking into account that $f(t)>0$ for $t\in (0, 1)$ and any $\alpha\geq 0$, $\beta>0$, we can consider the 
function $F(t)=f^2(t)$ on the interval $t\in (0, 1)$ instead of $f(t)$. We have
\begin{equation} \label{EH_2BPAC}
F'(t)=2\Big (\alpha^2\beta +(1-2\alpha)(1+\beta)t^2 \Big )t^{2\beta-1}\;.
\end{equation}

If $0\leq \alpha\leq1/2$, then $F'(t)>0$ for $t\in(0, 1)$. If $\alpha>1/2$, then the positive root of the equation
$F'(t)=0$ is
\begin{equation} \label{EH_2BPAH}
t_1=\sqrt{\frac{\alpha^2\beta}{(2\alpha -1)(1+\beta)}}\;.
\end{equation}
Herewith, if
\begin{equation} \label{EH_2BPAD}
\frac{\alpha^2\beta}{(2\alpha -1)(1+\beta)}\geq 1\;,
\end{equation}
then $t_1\notin (0, 1)$. Solving inequality (\ref{EH_2BPAD}) with respect to $\alpha$, we obtain intervals for which (\ref{EH_2BPAD})
holds:
$$
\alpha \leq \alpha_{_1}=\frac{\sqrt{1+\beta}}{\sqrt{1+\beta}+1}\;\;\;{\rm and}\;\;\;\alpha \geq \alpha_{_2}=\frac{\sqrt{1+\beta}}{\sqrt{1+\beta}-1}\;.
$$
Hence, $F'(t)=(f^2(t))'>0$ for $t\in (0, 1)$ if $\alpha\leq \alpha_{_1}$ or $\alpha\geq \alpha_2$. This, by (\ref{EH_1BPAB}) and (\ref{EH_2BPAB}), 
proves the equality (\ref{EH_2BAA}) for (\ref{EH_3AA}).

Furthermore, by (\ref{EH_2B}),
$$
C_{\alpha, \beta, 1}(\bs e_n)=|k|\beta\sup _{\sigma \in {\mathbb S}^{n-1}_+ }
\big |\alpha -(\bs e_{\sigma}, \bs e_n)^2 |\big (\bs e_{\sigma}, \bs e_n \big )^\beta \geq
|k|\beta|1-\alpha|\;.
$$
Hence, by $C_{\alpha, \beta, 1} \geq C_{\alpha, \beta, 1}(\bs e_n)$ and by (\ref{EH_2BAA}) we obtain $C_{\alpha, \beta, 1}=
C_{\alpha, \beta, 1}(\bs e_n)$, which completes the proof for the case $\alpha \leq \alpha_{_1}$ as for as for the case 
$\alpha \geq \alpha_2$.

Now, we consider the case $t_1<1$, that is
$$
\frac{\alpha^2\beta}{(2\alpha -1)(1+\beta)}< 1\;.
$$ 
The last inequality holds for $\alpha\in (\alpha_{_1}, \alpha_{_2})$. Differentiating (\ref{EH_2BPAC}), we obtain
$$
F''(t)=2\Big (\alpha^2\beta +(1-2\alpha)(1+\beta)(1+2\beta)t^2 \Big )t^{2(\beta-1)}\;.
$$
After calculations, we have
$$
F''(t_1)=-4\alpha^2\beta^2\left ( \frac{\alpha^2\beta}{(2\alpha -1)(1+\beta)} \right )^{\beta -1}\;.
$$
Since $\alpha>\alpha_1>1/2$ and $\beta>0$, by the last equality we conclude that $F''(t_1)<0$. Hence, the function $F(t)$ 
and, as a consequence, the function $f(t)$ attains its maximum on $[0,1]$ at the point $t_1\in(0, 1)$. 

Substituting $t_1$ from (\ref{EH_2BPAH}) in (\ref{EH_1BPAB}) and using (\ref{EH_2BPAB}), we arrive at (\ref{EH_4AA})
for the case $\alpha_{_1}<\alpha<\alpha_{_2}$.
\end{proof}

%%%%%%%%%%%%%%%%%%%%%%%%%%%%%%%%%%%%%%%%%%%%%%%%%%%%%%%%%%%%%%%%%
\section{The case $\alpha=0$}
%%%%%%%%%%%%%%%%%%%%%%%%%%%%%%%%%%%%%%%%%%%%%%%%%%%%%%%%%%%%%%%%%

In this section we consider integral (\ref{EH_1}) with $\alpha=0$ that is
$$
u(x)=k \int _{{\mathbb R}^{n-1}}\frac{f(y')}{|y-x|^\beta}\;dy'\;,
$$
where $x\in{\mathbb R}^{n} _+$, $\beta$ satisfies inequality (\ref{EQU_1}) and $f \in L^p({\mathbb R}^{n-1})$. 
Here we solve extremal problem (\ref{EH_4J}) with $\alpha=0$ and obtain the explicit value for $C_{0, \beta, p}$. 
Namely, we prove 

\begin{theorem} \label{P_1_0} Let $\alpha=0$ in $(\ref{EH_1})$ and let any of the following conditions holds:

{\rm (i)} $\beta\geq n-1$ and $p\in [1, \infty )$,

{\rm (ii)} $\beta > n-1$ and $p=\infty $,

{\rm (iii)} $\beta< n-1$ and $p\in [1, (n-1)/(n-1-\beta ))$.

Then for any $x\in{\mathbb R}^{n} _+$ the sharp constant $C_{0, \beta, p}$ in the inequality
\begin{equation} \label{EH_2A20}
\left |\nabla u(x)\right |\leq \frac{ C_{0, \beta, p}}{x_{n}^{2-n+\beta+((n-1)/p)}}\;\big|\!\big |f \big |\!\big |_p
\end{equation}
is given by $C_{0, \beta, 1}=|k|\beta$, and
\begin{equation} \label{EQ_expl}
C_{0, \beta, p}=|k|\beta\left \{\frac{\pi^{\frac{n-1}{2}}\Gamma\left ( \frac{(\beta -n +3)p+n-1}{2(p-1)} \right )}
{\Gamma\left ( \frac{(\beta +2)p}{2(p-1)} \right )} \right \}^{\frac{p}{p-1}}
\end{equation}
for $p>1$.

The constant $C_{0, \beta, p}$ is sharp under conditions of the Theorem also in the weaker inequality 
obtained from $(\ref{EH_2A20})$ by replacing $\nabla u$ by $\partial u/\partial x_n$.
\end{theorem}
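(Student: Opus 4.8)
The plan is to read off from Proposition~\ref{P_1}, with $\alpha=0$, a clean expression for the extremal quantity over the direction $\bs z$, then to show that the extremum sits at $\bs z=\bs e_n$, and finally to evaluate the constant by an elementary Beta-integral. Putting $\alpha=0$ turns the vector $\alpha\bs e_n-(\bs e_\sigma,\bs e_n)\bs e_\sigma$ into $-(\bs e_\sigma,\bs e_n)\bs e_\sigma$, so on ${\mathbb S}^{n-1}_+$ the factor $|(\alpha\bs e_n-(\bs e_\sigma,\bs e_n)\bs e_\sigma,\bs z)|$ becomes $(\bs e_\sigma,\bs e_n)\,|(\bs e_\sigma,\bs z)|$. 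Writing $q=p/(p-1)$ and
\[
\gamma=(\beta+1)q-n=\frac{(\beta-n+1)p+n}{p-1},
\]
formulas (\ref{EH_3DA}), (\ref{EH_4J}) (and the $p=\infty$ case, where $q=1$) give
\[
C_{0,\beta,p}=|k|\beta\,\sup_{|\bs z|=1}\Big\{\int_{{\mathbb S}^{n-1}_+}(\bs e_\sigma,\bs e_n)^{\gamma}\,|(\bs e_\sigma,\bs z)|^{q}\,d\sigma\Big\}^{1/q}\qquad(1<p\le\infty).
\]
A short computation shows that the hypotheses (i)--(iii) are together exactly the standing assumption (\ref{EQU_1}), and that the latter forces $\gamma>0$; this positivity is what the proof turns on. The case $p=1$ I would dispose of at once: by (\ref{EH_2B}), $C_{0,\beta,1}(\bs z)=|k|\beta\sup_{\sigma\in{\mathbb S}^{n-1}_+}(\bs e_\sigma,\bs e_n)^{\beta+1}|(\bs e_\sigma,\bs z)|\le|k|\beta$ because each factor is $\le1$, with equality at $\bs z=\bs e_\sigma=\bs e_n$, whence $C_{0,\beta,1}=|k|\beta$.

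For $1<p\le\infty$ the crux is to prove that the supremum above equals the value at $\bs z=\bs e_n$, i.e.
\[
\sup_{|\bs z|=1}\int_{{\mathbb S}^{n-1}_+}(\bs e_\sigma,\bs e_n)^{\gamma}|(\bs e_\sigma,\bs z)|^{q}\,d\sigma=\int_{{\mathbb S}^{n-1}_+}(\bs e_\sigma,\bs e_n)^{\gamma+q}\,d\sigma .
\]
By rotational symmetry about the $x_n$-axis the left-hand integral depends on $\bs z$ only through the angle $\psi\in[0,\pi]$ between $\bs z$ and $\bs e_n$, so I take $\bs z=\cos\psi\,\bs e_n+\sin\psi\,\bs e_1$. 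Projecting ${\mathbb S}^{n-1}$ orthogonally onto the plane $\mathrm{span}\{\bs e_n,\bs e_1\}$ (equivalently, integrating out the $n-2$ transverse directions) and writing the planar part of $\bs e_\sigma$ in polar form $\rho(\cos\chi,\sin\chi)$, one has $(\bs e_\sigma,\bs e_n)=\rho\cos\chi$, $(\bs e_\sigma,\bs z)=\rho\cos(\chi-\psi)$, and the surface measure pushes forward to a density proportional to $\rho(1-\rho^{2})^{(n-4)/2}\,d\rho\,d\chi$. The $\rho$-integration factors out, and the assertion collapses to the one-dimensional inequality
\[
\int_{-\pi/2}^{\pi/2}(\cos\chi)^{\gamma}\,|\cos(\chi-\psi)|^{q}\,d\chi\ \le\ \int_{-\pi/2}^{\pi/2}(\cos\chi)^{\gamma+q}\,d\chi,\qquad\psi\in{\mathbb R}.
\]
(The same reduction is reached on ${\mathbb R}^{n-1}$: from $(\nabla u(x),\bs z)=k\beta\int_{{\mathbb R}^{n-1}}(y-x,\bs z)\,|y-x|^{-\beta-2}f(y')\,dy'$ one scales $y'-x'=x_nv$, integrates out $v_2,\dots,v_{n-1}$, and sets $v_1=\tan\chi$.)

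To prove this one-dimensional inequality I use $\gamma>0$: the functions $(\cos\chi)^{\gamma}$ and $|\cos\chi|^{q}$ are nonnegative, $\pi$-periodic, even and nonincreasing on $[0,\pi/2]$, hence coincide with their own symmetric-decreasing rearrangements on the circle ${\mathbb R}/\pi{\mathbb Z}$; by the Hardy--Littlewood (Riesz) rearrangement inequality on the circle, the pairing of $(\cos\chi)^{\gamma}$ with the translate $|\cos(\chi-\psi)|^{q}$ of $|\cos\chi|^{q}$ is maximal precisely when $\psi\equiv0\pmod{\pi}$, which is the claimed bound (a hands-on alternative is to differentiate in $\psi$ and check the sign, again using $\gamma>0$). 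Hence $C_{0,\beta,p}=C_{0,\beta,p}(\bs e_n)$, and it remains to evaluate
\[
C_{0,\beta,p}(\bs e_n)=|k|\beta\Big\{\int_{{\mathbb S}^{n-1}_+}(\bs e_\sigma,\bs e_n)^{(\beta+2)q-n}\,d\sigma\Big\}^{1/q}
\]
via $\int_{{\mathbb S}^{n-1}_+}(\bs e_\sigma,\bs e_n)^{\lambda}\,d\sigma=\omega_{n-1}\int_0^{\pi/2}\cos^{\lambda}\theta\,\sin^{n-2}\theta\,d\theta=\pi^{(n-1)/2}\,\Gamma\!\big(\tfrac{\lambda+1}{2}\big)/\Gamma\!\big(\tfrac{\lambda+n}{2}\big)$; taking $\lambda=(\beta+2)q-n$ and rewriting the two $\Gamma$-arguments with $q=p/(p-1)$ yields formula (\ref{EQ_expl}) (and, for $p=1$, $C_{0,\beta,1}(\bs e_n)=|k|\beta\sup_\sigma(\bs e_\sigma,\bs e_n)^{\beta+2}=|k|\beta$). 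The final assertion is then immediate: by Proposition~\ref{P_1} the sharp coefficient in the inequality obtained from (\ref{EH_2A20}) by replacing $\nabla u$ by $\partial u/\partial x_n$ is $C_{0,\beta,p}(\bs e_n)\,x_n^{-(2-n+\beta+(n-1)/p)}$, which we have just identified with the sharp coefficient for $|\nabla u|$.

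I expect the second step --- reducing the sphere integral to $\int_{-\pi/2}^{\pi/2}(\cos\chi)^{\gamma}|\cos(\chi-\psi)|^{q}d\chi$ and proving this is maximised at $\psi=0$ --- to be the main obstacle; everything there rides on $\gamma>0$ (guaranteed by (i)--(iii)), which makes both factors symmetric-decreasing and pins the extremal direction at $\bs z=\bs e_n$. The remaining ingredients --- the $\alpha=0$ simplification, the Beta-integral, and the normal-derivative remark --- are routine.
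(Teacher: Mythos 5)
Your proof is correct, but the pivotal step is carried out by a genuinely different argument than the paper's. Both proofs reduce, via Proposition~\ref{P_1}, to showing that $\sup_{|\bs z|=1}\int_{{\mathbb S}^{n-1}}|(\bs e_\sigma,\bs z)|^{\mu}|(\bs e_\sigma,\bs e_n)|^{\lambda}\,d\sigma$ is attained at $\bs z=\bs e_n$ (your $\gamma$ and $q$ are the paper's $\lambda$ and $\mu$), and both hinge on exactly the positivity $\lambda>0$ guaranteed by (i)--(iii), equivalently by (\ref{EQU_1}). The paper settles the extremality in two lines: H\"older's inequality with the conjugate pair $\frac{\lambda+\mu}{\mu},\frac{\lambda+\mu}{\lambda}$ (this is where $\lambda,\mu>0$ enters) splits the integral into a $\bs z$-independent factor and a power of $\int|(\bs e_\sigma,\bs e_n)|^{\lambda+\mu}d\sigma$, and rotational invariance identifies the two. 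You instead integrate out the directions transverse to $\mathrm{span}\{\bs e_n,\bs z\}$ to reduce to the circle integral $\int_{-\pi/2}^{\pi/2}(\cos\chi)^{\gamma}|\cos(\chi-\psi)|^{q}d\chi$ and then invoke the Hardy--Littlewood/Riesz rearrangement inequality on ${\mathbb R}/\pi{\mathbb Z}$; this is valid (your pushforward density $\rho(1-\rho^2)^{(n-4)/2}$ is right for $n>2$, and both factors are indeed symmetric decreasing on that circle since $\gamma>0$), but it is heavier machinery for the same conclusion. What your route buys is a more geometric explanation of \emph{why} the extremal direction is $\bs e_n$ (maximal overlap of two symmetric-decreasing profiles), and it would survive in situations where the specific H\"older exponents are unavailable; what the paper's route buys is brevity. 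The $p=1$ case, the Beta-integral evaluation, the equivalence of (i)--(iii) with (\ref{EQU_1}), and the normal-derivative remark are handled the same way in both.

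One small caution: your computation produces the outer exponent $1/q=(p-1)/p$ on the curly bracket, whereas (\ref{EQ_expl}) as printed carries the exponent $p/(p-1)$. Your exponent is the correct one --- it agrees with (\ref{Eq_H2}) and with the $p=2$ specialization $C_{0,n,2}=\{n(n+3)/(2^n\omega_n)\}^{1/2}$ of Corollary 3 --- so the display in (\ref{EQ_expl}) contains a typo and you should not "correct" your derivation to match it.
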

\begin{proof} Let $\alpha=0$ in $(\ref{EH_1})$ and $p=1$. By (\ref{EH_4J}) and (\ref{EH_3HW3}),
\begin{equation} \label{EQ_P_1}
C_{0, \beta, 1}=|k|\beta\sup _{|\bs z|=1}\sup _{\sigma \in {\mathbb S}^{n-1}}
\big |\big (\bs e_{\sigma},\;\bs z \big )\big |\big |\big (\bs e_{\sigma}, \bs e_{n} \big )\big |^{\beta +1}\leq |k|\beta\;.
\end{equation}
On the other hand,
$$
C_{0, \beta, 1}\geq|k|\beta\sup _{\sigma \in {\mathbb S}^{n-1}}
\big |\big (\bs e_{\sigma},\;\bs e_{n} \big )\big |\big |\big (\bs e_{\sigma}, \bs e_{n} \big )\big |^{\beta +1}= |k|\beta\;,
$$
which, together with (\ref{EQ_P_1}), implies $C_1=|k|\beta$. We note that by (\ref{EH_2B}), 
$$
C_{0, \beta, 1}(\bs e_n)={|k|\beta}\sup _{\sigma \in {\mathbb S}^{n-1}_+}
\big (\bs e_{\sigma}, \bs e_{n} \big )^{\beta +2} =|k|\beta\;,
$$
that is $C_{0, \beta, 1}=C_{0, \beta, 1}(\bs e_n)$.

Let now $\alpha=0$ in $(\ref{EH_1})$ and $p> 1$. By (\ref{EH_4J}) and (\ref{EH_3HW1}) we have
\begin{equation} \label{EH_3HW1R}
C_{0, \beta, p}=\frac{|k|\beta}{2^{(p-1)/p}}\;\sup _{|\bs z|=1}
\left \{ \int _ {{\mathbb S}^{n-1}}\!\!
\big |\!\big (\bs e_{\sigma}, \bs z \big )\!\big |^{\frac{p}{p\!-\!1}}
\big |\big (\bs e_{\sigma}, \bs e_{n} \big )\big |^{\frac{(\beta - n+1)p+n}{p-1}}d\sigma \!\right \}^{\frac{p\!-\!1}{p}}\;.
\end{equation}

Let us denote by $\mu=p/(p-1)$ and $\lambda=((\beta - n+1)p+n)/(p-1)$ the powers in (\ref{EH_3HW1R}). 
Obviously, $\lambda >0$ for $\beta\geq n-1$ and any $p>1$. That is, $\lambda >0$ if condition (i) is satisfied.

If $\beta>n-1$ and $p=\infty$, then $\lambda=\beta-n+1>0$. Therefore, $\lambda >0$ if condition (ii) holds.

If $\beta <n-1$, then $\lambda >0$ for $p<n/(n-\beta -1)$. For $\beta -n+1<0$, by inequality (\ref{EQU_1}), 
we have $p<(n-1)/(n-\beta -1)$. So, $\lambda >0$ if condition (iii) is satisfied.

By H\"older's inequality, we obtain
\begin{equation} \label{Eq_H}
\int _ {{\mathbb S}^{n-1}}\!\!\big |\!\big (\bs e_{\sigma}, \bs z \big )\!\big |^\mu
\big |\big (\bs e_{\sigma}, \bs e_{n} \big )\big |^\lambda d\sigma\leq 
\left \{\int _ {{\mathbb S}^{n-1}}\!\!\big |\!\big (\bs e_{\sigma}, \bs z \big )\!\big |^{\mu\frac{\lambda+\mu}{\mu} }d\sigma\right \}
^{\frac{\mu}{\lambda+\mu}}
\left \{\int _ {{\mathbb S}^{n-1}}\!\!\big |\!\big (\bs e_{\sigma}, \bs e_n \big )\!\big |^{\lambda\frac{\lambda+\mu}{\lambda} }d\sigma\right \}
^{\frac{\lambda}{\lambda+\mu}}\;.
\end{equation}
Obviously, the value of the first integral on the right-hand side of the last inequality is independent of $\bs z$. Therefore, 
$$
\int _ {{\mathbb S}^{n-1}}\!\!\big |\!\big (\bs e_{\sigma}, \bs z \big )\!\big |^{\lambda+\mu }d\sigma=
\int _ {{\mathbb S}^{n-1}}\!\!\big |\!\big (\bs e_{\sigma}, \bs e_n \big )\!\big |^{\lambda+\mu}d\sigma\;,
$$
which, in view of (\ref{Eq_H}), implies
\begin{equation} \label{Eq_H1}
\int _ {{\mathbb S}^{n-1}}\!\!\big |\!\big (\bs e_{\sigma}, \bs z \big )\!\big |^\mu
\big |\big (\bs e_{\sigma}, \bs e_{n} \big )\big |^\lambda d\sigma\leq 
\int _ {{\mathbb S}^{n-1}}\!\!\big |\!\big (\bs e_{\sigma}, \bs e_n \big )\!\big |^{\lambda+\mu} d\sigma\;.
\end{equation}
On the other hand,
$$
\sup _{\bs z|=1} \int _ {{\mathbb S}^{n-1}}\!\!
\big |\!\big (\bs e_{\sigma}, \bs z \big )\!\big |^\mu
\big |\big (\bs e_{\sigma}, \bs e_{n} \big )\big |^\lambda d\sigma \geq 
\int _ {{\mathbb S}^{n-1}}\!\!\big |\!\big (\bs e_{\sigma}, \bs e_n \big )\!\big |^{\lambda+\mu} d\sigma\;,
$$
which together with (\ref{Eq_H1}) leads to
$$
\sup _{\bs z|=1} \int _ {{\mathbb S}^{n-1}}\!\!
\big |\!\big (\bs e_{\sigma}, \bs z \big )\!\big |^\mu
\big |\big (\bs e_{\sigma}, \bs e_{n} \big )\big |^\lambda d\sigma = 
\int _ {{\mathbb S}^{n-1}}\!\!\big |\!\big (\bs e_{\sigma}, \bs e_n \big )\!\big |^{\lambda+\mu} d\sigma\;.
$$
The last equality, in view of (\ref{EH_3HW1R}), implies
\begin{equation} \label{Eq_H2}
C_{0, \beta, p}=\frac{|k|\beta}{2^{(p-1)/p}}\left \{ \int _ {{\mathbb S}^{n-1}}\!\!
\big |\big (\bs e_{\sigma}, \bs e_{n} \big )\big |^{\frac{(\beta - n+2)p+n}{p-1}}d\sigma \!\right \}^{\frac{p\!-\!1}{p}}\;.
\end{equation}
Comparing (\ref{EH_3HW1}) with $\alpha=0$, $\bs z=\bs e_n$ and (\ref{Eq_H2}), we conclude that $C_{0, \beta, p}=C_{0, \beta, p}(\bs e_n)$.
This proves that the constant $C_{0, \beta, p}$ is sharp also in the weaker inequality 
obtained from $(\ref{EH_2A20})$ by replacing $\nabla u$ by $\partial u/\partial x_n$.

Evaluating the integral in (\ref{Eq_H2}), we find
\begin{eqnarray*}
& &\int _ {{\mathbb S}^{n-1}}\!\!\big |\big (\bs e_{\sigma}, \bs e_{n} \big )\big |^{\frac{(\beta - n+2)p+n}{p-1}}d\sigma=
2\omega_{n-1}\int_0^{\pi/2}\cos^{\frac{(\beta - n+2)p+n}{p-1}}\vartheta \sin^{n-2}\vartheta d\vartheta\\
& &\\
& &=\omega_{n-1}B\left (\frac{(\beta - n+3)p+n-1}{2(p-1)}, \frac{n-1}{2} \right )=
\frac{2\pi^{(n-1)/2}\Gamma\left (\frac{(\beta - n+3)p+n-1}{2(p-1)} \right )}{\Gamma\left (\frac{(\beta +2)p}{2(p-1)} \right )}\;,
\end{eqnarray*}
which together with (\ref{EH_2A2}), (\ref{EH_3J}), where $\alpha=0$, and (\ref{Eq_H2}) proves (\ref{EH_2A20}) and (\ref{EQ_expl}). 
\end{proof}

%%%%%%%%%%%%%%%%%%%%%%%%%%%%%%%%%%%%%%%%%%%%%%%%%%%%%%%%%%%%%%%%%
\section{Reduction of the extremal problem to finding of the 
supremum by parameter of a double integral}
%%%%%%%%%%%%%%%%%%%%%%%%%%%%%%%%%%%%%%%%%%%%%%%%%%%%%%%%%%%%%%%%%

The next assertion is based on the representation for ${\mathcal C}_{\alpha, \beta, p} (x)$,
obtained in Proposition \ref{P_1}.

\setcounter{theorem}{1}
\begin{proposition} \label{P_2} Let $f \in L^p({\mathbb R}^{n-1})$, $p>1$,
and let  $x $ be  an arbitrary point in ${\mathbb R}^{n} _+$. 
The sharp coefficient ${\mathcal C}_{\alpha, \beta, p} (x)$ in the inequality
\begin{equation} \label{EH_2_3}
|\nabla u(x)|\leq {\mathcal C}_{\alpha, \beta, p}( x)\big|\!\big |f \big |\!\big |_p
\end{equation}
is given by
\begin{equation} \label{EH_2A_3A}
{\mathcal C}_{\alpha, \beta, p}( x)= \frac{C_{\alpha, \beta, p} }{x_{n}^{2-n+\beta(1-\alpha)+((n-1)/p)}}\;,
\end{equation}
where
\begin{equation} \label{EH_1ABC}
C_{\alpha, \beta, p}\!=\!|k|\beta(\omega_{n-2})^{(p-1)/p}
\sup _{\gamma \geq 0}\;\frac{1}{\sqrt{1+\gamma^2}}\left \{ \int _ {0}^{\pi}d\varphi
\int _ {0}^{\pi/2}{\mathcal F}_{n,p}(\varphi, \vartheta ; \alpha, \beta, \gamma) \;
 d\vartheta\right \}^\frac{p-1}{p}.
\end{equation}
Here
\begin{equation} \label{EH_1AC}
{\mathcal F}_{n,p}(\varphi, \vartheta ; \alpha, \beta, \gamma)
=\big |{\mathcal G}(\varphi, \vartheta ; \alpha, \gamma) \big |^{p / (p-1)}
\cos^{((\beta-n)p+n)/(p-1)}\vartheta \sin^{n-2}\vartheta \sin^{n-3}\varphi
\end{equation}
with
\begin{equation} \label{E_AUX}
{\mathcal G}(\varphi, \vartheta ; \alpha, \gamma)= \cos^2 \vartheta -\alpha+\gamma\cos \vartheta
\sin \vartheta\cos \varphi \;.
\end{equation}
\end{proposition}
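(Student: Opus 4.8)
\textbf{Proof proposal for Proposition \ref{P_2}.}

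The plan is to start from the representation of $C_{\alpha,\beta,p}(\bs z)$ as an integral over the half-sphere ${\mathbb S}^{n-1}_+$ given in formula (\ref{EH_3DA}) of Proposition \ref{P_1}, together with $C_{\alpha,\beta,p}=\sup_{|\bs z|=1}C_{\alpha,\beta,p}(\bs z)$, and to reduce the supremum over the $(n-1)$-sphere of directions $\bs z$ to a supremum over a single scalar parameter $\gamma\ge 0$. The first step is a symmetry reduction: since the integrand in (\ref{EH_3DA}) depends on $\sigma$ only through $(\bs e_\sigma,\bs e_n)$ and $(\bs e_\sigma,\bs z)$, and since the weight is invariant under rotations fixing $\bs e_n$, one may assume without loss of generality that $\bs z$ lies in the plane spanned by $\bs e_n$ and one fixed orthogonal unit vector, say $\bs e_1$. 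Thus write $\bs z=\cos\psi\,\bs e_n+\sin\psi\,\bs e_1$; the linear functional $\alpha\bs e_n-(\bs e_\sigma,\bs e_n)\bs e_\sigma$ paired with $\bs z$ then becomes $\alpha\cos\psi-(\bs e_\sigma,\bs e_n)\big[\cos\psi\,(\bs e_\sigma,\bs e_n)+\sin\psi\,(\bs e_\sigma,\bs e_1)\big]$. Factoring out $\cos\psi$ (handling $\cos\psi=0$ as a limiting/degenerate case absorbed by $\gamma\to\infty$, or noting the supremum is attained with $\cos\psi\ne 0$) and setting $\gamma=\tan\psi$, the integrand acquires the factor $|\cos\psi|^{p/(p-1)}=\big(1+\gamma^2\big)^{-p/(2(p-1))}$ in front, which after the outer exponent $(p-1)/p$ produces exactly the prefactor $1/\sqrt{1+\gamma^2}$ in (\ref{EH_1ABC}).

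The second step is to introduce spherical coordinates on ${\mathbb S}^{n-1}_+$ adapted to the pair $(\bs e_n,\bs e_1)$: let $\vartheta\in[0,\pi/2]$ be the polar angle from $\bs e_n$ (so $(\bs e_\sigma,\bs e_n)=\cos\vartheta$), let $\varphi\in[0,\pi]$ be the next angle measuring the component along $\bs e_1$ (so $(\bs e_\sigma,\bs e_1)=\sin\vartheta\cos\varphi$), and let the remaining $n-3$ angles parametrize an $(n-3)$-sphere of total measure $\omega_{n-2}$. The surface element is $\sin^{n-2}\vartheta\,\sin^{n-3}\varphi\,d\vartheta\,d\varphi\,d\omega_{n-3}$; since the integrand is independent of the last $n-3$ angles, that integration contributes the factor $\omega_{n-2}$, which appears in (\ref{EH_1ABC}) as $(\omega_{n-2})^{(p-1)/p}$ after the outer exponent. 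Inside, $|(\alpha\bs e_n-(\bs e_\sigma,\bs e_n)\bs e_\sigma,\bs z)|/|\cos\psi|$ becomes $|\cos^2\vartheta-\alpha+\gamma\cos\vartheta\sin\vartheta\cos\varphi|=|{\mathcal G}(\varphi,\vartheta;\alpha,\gamma)|$, raised to the power $p/(p-1)$, and the weight $(\bs e_\sigma,\bs e_n)^{((\beta-n)p+n)/(p-1)}=\cos^{((\beta-n)p+n)/(p-1)}\vartheta$ combines with the measure to give precisely ${\mathcal F}_{n,p}$ of (\ref{EH_1AC}). Taking the supremum over $\psi$, equivalently over $\gamma\ge 0$ (the sign of $\gamma$ and the reflection $\varphi\mapsto\pi-\varphi$ show non-negative $\gamma$ suffices), yields (\ref{EH_1ABC}); formulas (\ref{EH_2_3}) and (\ref{EH_2A_3A}) are then just a restatement of (\ref{EH_2A2})--(\ref{EH_4J}) from Proposition \ref{P_1}.

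The main technical obstacle is the bookkeeping that justifies restricting $\bs z$ to the $(\bs e_n,\bs e_1)$-plane and the careful treatment of the degenerate direction $\bs z=\pm\bs e_1$ (i.e. $\cos\psi=0$): one must check that the scalar function $\bs z\mapsto C_{\alpha,\beta,p}(\bs z)$ is invariant under any rotation fixing $\bs e_n$, so that only the angle $\psi$ between $\bs z$ and $\bs e_n$ matters, and then verify that after factoring out $|\cos\psi|$ the resulting family of integrals over $\gamma\in[0,\infty)$ captures the supremum (the case $\cos\psi=0$ corresponding to the limit $\gamma\to\infty$, where the prefactor $1/\sqrt{1+\gamma^2}$ together with the growth of $|{\mathcal G}|^{p/(p-1)}$ in $\gamma$ must be shown not to exceed the supremum, or is simply excluded because $\lambda>0$ forces the optimal $\bs z$ to have a nonzero $\bs e_n$-component). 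A secondary point is confirming convergence of the double integral, which follows from $((\beta-n)p+n)/(p-1)>-1$ — equivalent to $\beta>(n-1)(p-1)/p$, i.e. condition (\ref{EQU_1}) — exactly as the single-variable computation at the end of the proof of Theorem \ref{P_1_0} shows. The rest is a routine change of variables.
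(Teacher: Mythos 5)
Your proof is correct and takes essentially the same approach as the paper: both reduce $\sup_{|\bs z|=1}$ to $\sup_{\gamma\ge 0}$ via $\gamma=|\bs z'|/z_n=\tan\psi$ (using rotational invariance about $\bs e_n$ and the symmetry $\bs z\mapsto-\bs z$) and then express the hemisphere integral as the double integral in $(\varphi,\vartheta)$ with the factor $\omega_{n-2}$ from the inert angles. The only cosmetic difference is that you parametrize ${\mathbb S}^{n-1}_+$ directly by spherical coordinates adapted to $(\bs e_n,\bs e_1)$, whereas the paper first projects onto the ball ${\mathbb B}^{n-1}$ and applies the tabulated Prudnikov--Brychkov--Marichev formula before substituting $r=\sin\vartheta$; the resulting integrand is identical.
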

\begin{proof} 
The equality (\ref{EH_2A_3A}) for the sharp coefficient ${\mathcal C}_{\alpha, \beta, p}( x)$ in (\ref{EH_2_3}) 
was proved in Proposition \ref{P_1}. Since the integrand in (\ref{EH_3DA}) does not
change when $\bs z \in {\mathbb S}^{n-1}$ is replaced by $-\bs z$,
we may assume that $z_n=(\bs e_n, \bs z) > 0$ in (\ref{EH_4J}).

Let $\bs z'=\bs z-z_n\bs e_n$. Then $(\bs z', \bs e_n)=0$ and hence
$z^2_n+|\bs z'|^2=1$.
Analogously, with
$\sigma=(\sigma_1,\dots,\sigma_{n-1},\sigma_n) \in {\mathbb S}^{n-1}_+$,
we associate the vector $\bs \sigma '=\bs e_\sigma -\sigma_n\bs e_n$.

Using the equalities $(\bs \sigma ', \bs e_n)=0$,
$\sigma_n =\sqrt{1-|\bs \sigma '|^2}$
and $(\bs z', \bs e_n)=0$, we find an expression for
$(\alpha\bs e_n -(\bs e_{\sigma}, \bs e_n)\bs e_{\sigma},\; \bs z \big )$
as a function of $\bs \sigma'$:
\begin{eqnarray}
& &(\alpha\bs e_n - (\bs e_{\sigma}, \bs e_n)\bs e_{\sigma},\; \bs z \big )=
\alpha z_n-\sigma_n\big ( \bs e_{\sigma}, \bs z \big )
=\alpha z_n-\sigma_n
\big ( \bs \sigma '+\sigma_n \bs e_n,\; \bs z'+z_n\bs e_n \big )\nonumber\\
& &=\alpha z_n-\sigma_n
\big [ \big ( \bs \sigma ', \bs z' \big )+z_n \sigma_n\big ]
=-\big [(1-|\bs\sigma '|^2) -\alpha \big ]z_n-\sqrt{1-|\bs\sigma '|^2}\;
\big ( \bs \sigma ', \bs z' \big ) .
\label{EH_B}
\end{eqnarray}

Let ${\mathbb B}^{n}=\{ x=(x_1,\dots,x_n)\in {\mathbb R}^n: |x|< 1 \}$.
By (\ref{EH_3DA}) and (\ref{EH_B}), taking into account that
$d\sigma=d\sigma '/\sqrt{1-|\bs\sigma '|^2}$, we may write (\ref{EH_4J}) as
\begin{eqnarray} \label{EH_B1C}
\hspace{-15mm}& &C_{\alpha, \beta, p}=|k|\beta\sup _{\bs z \in {\mathbb S}^{n-1}_+}
\left \{ \int _ {{\mathbb B}^{n-1}}
\frac{{\mathcal H}_{\alpha, p}\big  ( |\bs \sigma '|,  (\bs \sigma ',  \bs z') \big  )
\big ( 1-|\bs\sigma '|^2\big )^{((\beta -n )p+n )/(2p-2)}}
{\sqrt{1-|\bs\sigma '|^2}}\;d\sigma ' \right \}^{\frac{p-1}{p}} \nonumber \\
\hspace{-15mm}& &\nonumber \\
\hspace{-15mm}& &=|k|\beta\sup _{\bs z \in {\mathbb S}^{n-1}_+}\!\!
\left \{\! \int _ {{\mathbb B}^{n-1}}
\!\!{\mathcal H}_{\alpha, p}\big  ( |\bs \sigma '|,  (\bs \sigma ',  \bs z') \big  )
\big ( 1\!-\!|\bs\sigma '|^2\big )^{((\beta -n -1)p+n+1 )/(2p-2)}d\sigma '\! \right \}^{\frac{p-1}{p}},
\end{eqnarray}
where
\begin{equation} \label{E_FH}
{\mathcal H}_{\alpha, p}\big  ( |\bs \sigma '|,  (\bs \sigma ',  \bs z') \big  )
=\Big | \big [(1-|\bs\sigma '|^2)-\alpha \big ]z_n+\sqrt{1-|\bs\sigma '|^2}\;
\big ( \bs \sigma ', \bs z' \big )\Big |^{p/(p-1)} .
\end{equation}

Using the well known formula (see e.g. \cite{PBM}, 3.3.2(3)),
$$
\int_{{\mathbb B}^{n}}g\big (|\bs x|, (\bs a, \bs x)\big )dx=\omega_{n-1}\int_0^1 r^{n-1} dr
\int_0^\pi g\big ( r, |\bs a|r \cos \varphi \big )\sin ^{n-2}\varphi \;d\varphi \;,
$$
we obtain
\begin{eqnarray*} 
& &\hspace{-17mm}\int _ {{\mathbb B}^{n-1}}
{\mathcal H}_{\alpha, p}\big  ( |\bs \sigma '|,  (\bs \sigma ',  \bs z') \big  )
\big ( 1-|\bs\sigma '|^2\big )^{((\beta -n -1)p+n+1 )/(2p-2)}\;d\sigma ' \nonumber\\
& &\hspace{-17mm}\\
& &\hspace{-17mm}=\omega_{n-2}\!\int^{1}_{0}\! r^{n-2}\big ( 1\!-\!r^2\big )^{((\beta -n -1)p+n+1 )/(2p-2)} dr
\!\int^{\pi}_{0}\!\!{\mathcal H}_{\alpha, p}
\big  (r, r|\bs z'|\cos  \varphi \big  )\sin ^{n-3}\varphi d\varphi\;.
\end{eqnarray*}
Making the change of variable $r=\sin \vartheta $ on the right-hand side of the last equality, we find
\begin{eqnarray} \label{E_PBMD}
& &\hspace{-7mm}\int _ {{\mathbb B}^{n-1}}
{\mathcal H}_{\alpha, p}\big  ( |\bs \sigma '|,  (\bs \sigma ',  \bs z') \big  )
\big ( 1-|\bs\sigma '|^2\big )^{((\beta -n -1)p+n+1 )/(2p-2)}\;d\sigma ' \\
& &\hspace{-7mm}\nonumber\\
& &\hspace{-7mm}=\omega _{n-2}\int^{\pi}_{0}\!\! \sin ^{n-3}\varphi d\varphi \!\int^{\pi/2}_{0}\!\!
{\mathcal H}_{\alpha, p}\big  (\sin \vartheta,\; |\bs z'|\sin \vartheta\cos  \varphi \big  )
\sin ^{n-2}\vartheta \cos^{\frac{(\beta -n )p+n }{p-1}}\vartheta d\vartheta
\;,\nonumber
\end{eqnarray}
where, by (\ref{E_FH}),
$$
{\mathcal H}_{\alpha, p}\big  (\sin \vartheta,\; |\bs z'|\sin \vartheta\cos  \varphi \big  )=
\Big |\big ( \cos^2 \vartheta -\alpha \big )z_n\!+\!|\bs z'|\cos \vartheta
\sin \vartheta\cos \varphi \Big |^{p/(p-1)}.
$$
Introducing here the parameter $\gamma =|\bs z'|/z_n$ and using
the equality $|\bs z'|^2+z^2_n=1$, we obtain
\begin{equation} \label{E_FHGH}
{\mathcal H}_{\alpha, p}\big  (\sin \vartheta,\; |\bs z'|\sin \vartheta\cos  \varphi \big  )=
(1+\gamma^2)^{-p/(2p-2)}\big |{\mathcal G}(\varphi, \vartheta ; \alpha, \gamma)\big |^{p/(p-1)},
\end{equation}
where ${\mathcal G}(\varphi, \vartheta ; \alpha, \gamma)$ is given by (\ref{E_AUX}).

By (\ref{EH_B1C}), taking into account (\ref{E_PBMD}) and (\ref{E_FHGH}), we arrive at (\ref{EH_1ABC}).
\end{proof}

%%%%%%%%%%%%%%%%%%%%%%%%%%%%%%%%%%%%%%%%%%%%%%%%%%%%%%%%%%%%%%%%%
\section{The case $p=2$}
%%%%%%%%%%%%%%%%%%%%%%%%%%%%%%%%%%%%%%%%%%%%%%%%%%%%%%%%%%%%%%%%%

In the next assertion we obtain the explicit formula for $C_{\alpha, \beta, 2}$.

\setcounter{theorem}{2}
\begin{theorem} \label{C_2_2} Let $f \in L^2({\mathbb R}^{n-1})$,
and let  $x $ be  an arbitrary point in ${\mathbb R}^{n} _+$. 
The sharp coefficient ${\mathcal C}_{\alpha, \beta, 2} (x)$ in the inequality
\begin{equation} \label{EH_2_2.1}
|\nabla u(x)|\leq {\mathcal C}_{\alpha, \beta, 2}( x)\big|\!\big |f \big |\!\big |_2
\end{equation}
is given by
\begin{equation} \label{EH_2_2.2}
{\mathcal C}_{\alpha, \beta, 2}( x)= \frac{C_{\alpha, \beta, 2}}{x_{n}^{\beta(1-\alpha)+((3-n)/2)}}\;,
\end{equation}
where
\begin{equation} \label{EH_2_2.3}
C_{\alpha, \beta, 2}=|k|\beta\left \{ \frac{\pi^{(n-1)/2}
\Gamma\left (\frac{2\beta+3-n}{2} \right )}{\Gamma(\beta+2)}
\left [  \frac{2\alpha^2 \beta(\beta+1)}{2\beta+1-n} -2\alpha(\beta+1)+ \frac{2\beta+3-n}{2}\right ]\right \}^{1/2}
\end{equation}
for $(n-1)/2<\beta\leq n-1$. The same formula for $C_{\alpha, \beta, 2}$ holds for $\beta>n-1$ and
$$
\alpha\leq \alpha_{_1}=\frac{(1+\beta)(2\beta+1-n)-\sqrt{(1+\beta)(2\beta+1-n)(\beta+1-n)}}{2\beta(1+\beta)}\;,
$$
or
$$
\alpha\geq \alpha_{_2}=\frac{(1+\beta)(2\beta+1-n)+\sqrt{(1+\beta)(2\beta+1-n)(\beta+1-n)}}{2\beta(1+\beta)}\;.
$$

If $\beta>n-1$ and $\alpha_1<\alpha< \alpha_2$, then
\begin{equation} \label{EH_2_2.6}
C_{\alpha, \beta, 2}=|k|\beta\left \{ \frac{\pi^{(n-1)/2}
\Gamma\left (\frac{2\beta+3-n}{2} \right )}{2\Gamma(\beta+2)}\right \}^{1/2}\;.
\end{equation}

If ${\rm (i)}$ $(n-1)/2<\beta\leq n-1$ or ${\rm (ii)}$ $\beta>n-1$, $\alpha \leq\alpha_1$ or  $\alpha\geq\alpha_2$, 
then the coefficient ${\mathcal C}_{\alpha, \beta, 2}( x)$ is sharp under conditions of the Theorem also in
the weaker inequality obtained from $(\ref{EH_2_2.1})$ by replacing $\nabla u$ by
$\partial u/\partial x_n$.
\end{theorem}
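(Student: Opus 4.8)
The plan is to start from the representation for $C_{\alpha,\beta,2}$ as a supremum over a parameter $\gamma\ge 0$ of a double integral, which is the $p=2$ specialization of Proposition~\ref{P_2}. Setting $p=2$ in \eqref{EH_1ABC}, the exponents simplify: $p/(p-1)=2$, $((\beta-n)p+n)/(p-1)=2\beta-n$, and $(p-1)/p=1/2$, so
\[
C_{\alpha,\beta,2}=|k|\beta\,(\omega_{n-2})^{1/2}\sup_{\gamma\ge 0}\frac{1}{\sqrt{1+\gamma^2}}\left\{\int_0^\pi d\varphi\int_0^{\pi/2}{\mathcal G}(\varphi,\vartheta;\alpha,\gamma)^2\cos^{2\beta-n}\vartheta\,\sin^{n-2}\vartheta\,\sin^{n-3}\varphi\,d\vartheta\right\}^{1/2}.
\]
First I would expand ${\mathcal G}^2=(\cos^2\vartheta-\alpha)^2+2\gamma(\cos^2\vartheta-\alpha)\cos\vartheta\sin\vartheta\cos\varphi+\gamma^2\cos^2\vartheta\sin^2\vartheta\cos^2\varphi$ and integrate term by term over $\varphi\in(0,\pi)$ against $\sin^{n-3}\varphi$. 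The middle term vanishes because $\int_0^\pi\cos\varphi\sin^{n-3}\varphi\,d\varphi=0$; the first term contributes $\int_0^\pi\sin^{n-3}\varphi\,d\varphi=\sqrt{\pi}\,\Gamma((n-2)/2)/\Gamma((n-1)/2)$, and the $\gamma^2$ term contributes $\int_0^\pi\cos^2\varphi\sin^{n-3}\varphi\,d\varphi$, a ratio of Gamma functions via the Beta function. Thus the double integral becomes $A(\alpha,\beta)+\gamma^2 B(\beta)$ for explicit constants $A,B$ obtained by also evaluating the $\vartheta$-integrals $\int_0^{\pi/2}(\cos^2\vartheta-\alpha)^2\cos^{2\beta-n}\vartheta\sin^{n-2}\vartheta\,d\vartheta$ and $\int_0^{\pi/2}\cos^{2\beta+2-n}\vartheta\sin^{n}\vartheta\,d\vartheta$ as Beta integrals; expanding $(\cos^2\vartheta-\alpha)^2=\cos^4\vartheta-2\alpha\cos^2\vartheta+\alpha^2$ and using $B((a+1)/2,(b+1)/2)=\Gamma((a+1)/2)\Gamma((b+1)/2)/\Gamma((a+b+2)/2)$ gives the three summands $\alpha^2,\ -2\alpha(\beta+1)/\cdots$, etc., matching the bracket in \eqref{EH_2_2.3}. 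The convergence of all these Beta integrals near $\vartheta=\pi/2$ requires $2\beta-n>-1$, i.e. $\beta>(n-1)/2$, which is exactly condition \eqref{EQU_1} for $p=2$.

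Once the double integral is written as $A+\gamma^2 B$, the parameter optimization reduces to studying $g(\gamma)=(A+\gamma^2B)/(1+\gamma^2)$ for $\gamma\ge 0$. Since $g(\gamma)=B+(A-B)/(1+\gamma^2)$, the supremum is attained at $\gamma=0$ with value $A$ if $A\ge B$, and at $\gamma\to\infty$ with value $B$ if $A<B$. So $C_{\alpha,\beta,2}=|k|\beta\,(\omega_{n-2})^{1/2}\max\{A,B\}^{1/2}$ after absorbing constants. I would then identify: the ``$A$-branch'' gives \eqref{EH_2_2.3} (after combining $(\omega_{n-2})^{1/2}$ with the $\varphi$-integral constant $\sqrt{\pi}\,\Gamma((n-2)/2)/\Gamma((n-1)/2)$ to produce $\pi^{(n-1)/2}/\cdots$, using $\omega_{n-2}=2\pi^{(n-1)/2}/\Gamma((n-1)/2)$), and the ``$B$-branch'' gives \eqref{EH_2_2.6}, which is precisely \eqref{EH_2_2.3} with the bracket replaced by $(2\beta+3-n)/4$ — consistent since $B$ corresponds to the $\gamma^2$ coefficient. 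The inequality $A\ge B$ is a quadratic inequality in $\alpha$: writing $A-B$ as a quadratic $a_2\alpha^2+a_1\alpha+a_0$ with $a_2=2\beta(\beta+1)/(2\beta+1-n)>0$ (valid since $\beta>(n-1)/2$), its discriminant is proportional to $(\beta+1-n)$, so when $(n-1)/2<\beta\le n-1$ the discriminant is $\le 0$ and $A\ge B$ for all $\alpha\ge 0$ — giving the unconditional \eqref{EH_2_2.3}; when $\beta>n-1$ the roots $\alpha_1,\alpha_2$ are real and positive, and $A\ge B$ holds exactly for $\alpha\le\alpha_1$ or $\alpha\ge\alpha_2$, with the $B$-branch \eqref{EH_2_2.6} holding in between. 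The explicit formulas for $\alpha_{1,2}$ come straight from the quadratic formula applied to $a_2\alpha^2+a_1\alpha+a_0=0$.

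For the last assertion — that the coefficient is sharp even for $\partial u/\partial x_n$ in cases (i) and (ii) — I would observe that these are exactly the cases where the optimal $\gamma$ is $\gamma=0$, which corresponds (tracing back through the proof of Proposition~\ref{P_2}, where $\gamma=|\bs z'|/z_n$) to the optimal direction being $\bs z=\bs e_n$. Hence $C_{\alpha,\beta,2}=C_{\alpha,\beta,2}(\bs e_n)$, and by \eqref{EH_2A1}--\eqref{EH_2A} with $\bs z=\bs e_n$ the sharp constant for the normal derivative coincides with that for the full gradient; one must also exhibit, or invoke from Proposition~\ref{P_1}, an extremal (or extremizing sequence of) $f\in L^2$ realizing equality in the scalar inequality for $(\nabla u,\bs e_n)$. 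The main obstacle I anticipate is purely computational bookkeeping: carefully tracking the several Gamma-function factors through the $\varphi$- and $\vartheta$-integrations and the final repackaging into $\pi^{(n-1)/2}\Gamma((2\beta+3-n)/2)/\Gamma(\beta+2)$, and correctly signing and simplifying the quadratic $A-B$ so that its roots match the stated $\alpha_1,\alpha_2$; the structural parts (term-by-term $\varphi$-integration killing the cross term, the Möbius-type behavior of $g(\gamma)$, and the discriminant sign controlled by $\beta+1-n$) are routine once set up.
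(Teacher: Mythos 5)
Your proposal is correct and follows essentially the same route as the paper: specialize Proposition \ref{P_2} to $p=2$, expand ${\mathcal G}^2$ so the cross term vanishes under $\int_0^\pi\cos\varphi\sin^{n-3}\varphi\,d\varphi=0$, reduce the double integral to ${\mathcal I}_1+\gamma^2{\mathcal I}_2$, observe that $\sup_{\gamma\ge 0}({\mathcal I}_1+\gamma^2{\mathcal I}_2)/(1+\gamma^2)=\max\{{\mathcal I}_1,{\mathcal I}_2\}$, and control the sign of ${\mathcal I}_1-{\mathcal I}_2$ via a quadratic in $\alpha$ whose discriminant is proportional to $\beta+1-n$, with the normal-derivative sharpness following from $\gamma=0$ (i.e.\ $\bs z=\bs e_n$) being optimal in cases (i) and (ii). All the structural steps, including the identification of $\alpha_{1,2}$ and the two branches, coincide with the paper's argument.
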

\begin{proof} The equality (\ref{EH_2_2.2}) for the sharp coefficient ${\mathcal C}_{\alpha, \beta, 2}(x)$ in (\ref{EH_2_2.1}) 
was proved in Proposition \ref{P_1}. By (\ref{EH_1ABC}), (\ref{EH_1AC}) and (\ref{E_AUX}),
\begin{equation} \label{EH_9ABC}
C_{\alpha, \beta, 2}= |k|\beta\sqrt{\omega_{n-2}}\;\sup _{\gamma \geq 0}\;\frac{1}
{\sqrt{1+\gamma^2}}\left \{ \int _ {0}^{\pi}d\varphi\int _ {0}^{\pi/2}\!\!
{\mathcal F}_{n,2}(\varphi, \vartheta ; \alpha, \beta, \gamma)\; d\vartheta\right \}^{1/2},
\end{equation}
where
$$
\!{\mathcal F}_{n,2}(\varphi, \vartheta ; \alpha, \beta, \gamma)\!=\!
\big (\cos^2 \vartheta \!-\!\alpha \!+\!\gamma\cos \vartheta
\sin \vartheta\cos \varphi \big )^{2}\! \cos^{2\beta-n}\!\vartheta \!\sin^{n-2}\!\vartheta \sin ^{n-3}\!\varphi.
$$
The last equality and (\ref{EH_9ABC}) imply
\begin{equation} \label{EH_11ABCD}
C_{\alpha, \beta, 2}=|k|\beta\sqrt{\omega_{n-2}}\;\sup _{\gamma \geq 0}\;\frac{1}{\sqrt{1+\gamma^2}}
\left \{{\mathcal I}_1+ \gamma^2 {\mathcal I}_2 \right \}^{1/2},
\end{equation}
where
\begin{eqnarray} \label{EH_9B}
\hspace{-15mm}& &{\mathcal I}_1=\int _ {0}^{\pi} \sin ^{n-3}\varphi\;d\varphi
\int _ {0}^{\pi/2}\big (\cos^2 \vartheta -\alpha\big )^2\sin^{n-2}
\vartheta \cos^{2\beta -n} \vartheta \;d\vartheta \nonumber\\
\hspace{-15mm}& &\nonumber\\
\hspace{-15mm}& &=\frac{ \sqrt{\pi}\;\Gamma\left (\frac{n-2}{2} \right )\Gamma\left (\frac {2\beta+3-n}{2}
\right )}{2\Gamma(\beta+2)}\left \{ \frac{2\alpha^2 \beta(\beta+1)}{2\beta+1-n} -2\alpha(\beta+1)+ \frac{2\beta+3-n}{2} \right \} 
\end{eqnarray}
and
\begin{eqnarray} \label{EH_9BA}
\hspace{-8mm}{\mathcal I}_2&=&\int _ {0}^{\pi}\sin ^{n-3}\varphi \cos^2 \varphi\; d\varphi
\int _ {0}^{\pi/2}\sin^n \vartheta \cos^{2(\beta+1)-n} \vartheta
\;d\vartheta \nonumber\\
\hspace{-8mm}& &\nonumber\\
\hspace{-8mm}&=&\frac{\sqrt{\pi}\;\Gamma\left (\frac{n-2}{2} \right )
\Gamma\left (\frac{2\beta+3-n}{2} \right )}{4\Gamma(\beta+2)}.
\end{eqnarray}
By (\ref{EH_11ABCD}) we have
\begin{equation} \label{EQ_11ABCD}
C_{\alpha, \beta, 2}=|k|\beta\sqrt{\omega_{n-2}}\;\max \big \{ {\mathcal I}_1^{1/2}, {\mathcal I}_2^{1/2} \big  \}.
\end{equation}

Further, by (\ref{EH_9B}) and (\ref{EH_9BA}),
\begin{eqnarray} \label{EQ_10ABE}
\frac{{\mathcal I}_1}{{\mathcal I}_2}-1&=&4\alpha^2\frac{ \beta(\beta+1)}{2\beta+1-n} -4\alpha(\beta+1)+ 2(\beta+1)-n\nonumber\\
& &\nonumber\\
&=&\frac{4\alpha^2\beta(\beta+1)-4\alpha(\beta+1)(2\beta+1-n)+(2\beta+2-n)(2\beta+1-n)}{2\beta+1-n}\;.
\end{eqnarray}
We note that, by (\ref{EQ_11ABCD}) with $p=2$, $2\beta-n+1 >0$. By
$$
f(\alpha)=4\alpha^2\beta(\beta+1)-4\alpha(\beta+1)(2\beta+1-n)+(2\beta+2-n)(2\beta+1-n)
$$
we denote the numerator of fraction (\ref{EQ_10ABE}).
The roots of the equation $f(\alpha)=0$ are
\begin{equation} \label{EQ_10ABF}
\alpha_{_{1,2}}=\frac{(\beta+1)(2\beta+1-n)\pm \sqrt{(\beta+1)(2\beta+1-n)(\beta+1-n)}}{2\beta(\beta+1)}\;.
\end{equation}

It follows from (\ref{EQ_10ABE}) and (\ref{EQ_10ABF}) that ${\mathcal I}_1\geq {\mathcal I}_2$ for $\beta+1-n\leq 0$.
Combining the last condition for $\beta$ with inequality $\beta> (n-1)/2$ and taking into account (\ref{EH_9B}), 
(\ref{EQ_11ABCD}), we arrive at formula (\ref{EH_2_2.3}) for the case  $(n-1)/2<\beta\leq n-1$. 

Now, let $\beta >n-1$. Then, by (\ref{EQ_10ABE}),
$$
\frac{{\mathcal I}_1}{{\mathcal I}_2}-1\geq 0
$$
for $\alpha \leq \alpha_{_{1}}$ or $\alpha \geq \alpha_{_{2}}$, and
$$
\frac{{\mathcal I}_1}{{\mathcal I}_2}-1< 0
$$
for $\alpha_{_{1}}<\alpha <\alpha_{_{2}}$. This, by (\ref{EQ_11ABCD}), proves (\ref{EH_2_2.3}) for 
$\alpha \leq \alpha_{_{1}}$ or $\alpha \geq \alpha_{_{2}}$ and (\ref{EH_2_2.6}) for $\alpha_{_{1}}<\alpha <\alpha_{_{2}}$.

\smallskip
In conclusion, we note that supremum in (\ref{EH_11ABCD}) is attained for $\gamma=0$ in two cases: (i) $(n-1)/2<\beta\leq n-1$,
(ii) $\beta >n-1$ and $\alpha \leq \alpha_{_{1}}$ or $\alpha \geq \alpha_{_{2}}$. Taking into account that $\gamma =|\bs z'|/z_n$,
we conclude that $C_{\alpha, \beta, 2}=C_{\alpha, \beta, 2}(\bs e_n)$ for these cases. 
This proves that the coefficient ${\mathcal C}_{\alpha, \beta, 2}(x)$ is sharp under conditions of the Theorem also in the weaker inequality 
obtained from $(\ref{EH_2_2.1})$ by replacing $\nabla u$ by $\partial u/\partial x_n$.
\end{proof}

%%%%%%%%%%%%%%%%%%%%%%%%%%%%%%%%%%%%%%%%%%%%%%%%%%%%%%%%%%%%%%%%%%%%%%%
\section{The case $p=\infty$}
%%%%%%%%%%%%%%%%%%%%%%%%%%%%%%%%%%%%%%%%%%%%%%%%%%%%%%%%%%%%%%%%%%%%%%%

This section is devoted to the case $p=\infty$ with some restrictions on $\alpha$ and $\beta$. 
In the assertion below we obtain the explicit formula for $C_{\alpha, \beta, \infty }$ with 
any fixed $\beta>n-1$ and sufficiently large $\alpha >1$.
We note that inequality $\beta>n-1$ follows from (\ref{EQU_1}) with $p=\infty$.

\begin{theorem} \label{SP_2AAA} Let $f\in L^\infty({\mathbb R}^{n-1})$,
and let  $x $ be  an arbitrary point in ${\mathbb R}^n _+$. Let $\beta$ be a fixed and let 
$\alpha_n(\beta )$ be the root from the interval $(1, +\infty )$ of the equation  
\begin{equation} \label{EH_XBB}
\frac{2\Gamma\left ( \frac{\beta -n}{2}+1\right )}{\sqrt{\pi}\big ( \beta (\alpha-1)+n-1 \big )
\Gamma\left ( \frac{ \beta -n+1}{2}\right ) }
=\frac{\alpha-1}{1+\sqrt{1+(\alpha-1)^2}}
\end{equation} 
with respect to $\alpha$. 

If $\alpha \geq \alpha_n(\beta )$, then the sharp coefficient ${\mathcal C}_{\alpha, \beta, \infty}(x)$ in the inequality
\begin{equation} \label{EH_XAA}
|\nabla u(x)|\leq {\mathcal C}_{\alpha, \beta, \infty}( x)\big|\!\big |f \big |\!\big |_\infty
\end{equation}
is given by
\begin{equation} \label{EH_YAA}
{\mathcal C}_{\alpha, \beta, \infty }( x)=\frac{C_{\alpha, \beta, \infty}}{x_n^{2-n+\beta(1-\alpha)}}\;,
\end{equation}
where
\begin{equation} \label{EH_Y001AA}
C_{\alpha, \beta,\infty}=|k|\frac{\pi^{(n-1)/2}\;\Gamma\left ( \frac{\beta -n+1}{2}\right )}{
\Gamma\left ( \frac{\beta}{2}\right )}\big ( (\alpha-1)\beta+n-1  \big )\;.
\end{equation}

Under conditions of the Theorem, absolute value of the derivative of $u$
with respect to the normal to the boundary of the half-space at any
$x \in {\mathbb R}^n_+$ has the same supremum as $|\nabla u(x)|$.
\end{theorem}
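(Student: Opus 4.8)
The plan is to use the representation for $C_{\alpha,\beta,\infty}(\bs z)$ from Proposition~\ref{P_1}, namely formula (\ref{EH_3H}), and to reduce the extremal problem (\ref{EH_4J}) to a one-parameter supremum via Proposition~\ref{P_2} (in the appropriate $p\to\infty$ form), exactly as was done for $p=1$ and $p=2$ in the previous sections. Writing $\bs z = z_n\bs e_n + \bs z'$ with $|\bs z'|/z_n = \gamma \geq 0$, I expect that $C_{\alpha,\beta,\infty}$ becomes $|k|\beta\,\omega_{n-2}$ times the supremum over $\gamma\geq 0$ of $(1+\gamma^2)^{-1/2}$ times a double integral over $(\varphi,\vartheta)\in[0,\pi]\times[0,\pi/2]$ of $|{\mathcal G}(\varphi,\vartheta;\alpha,\gamma)|\cos^{\beta-n}\vartheta\sin^{n-2}\vartheta\sin^{n-3}\varphi$, with ${\mathcal G}$ as in (\ref{E_AUX}). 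The point of the hypothesis $\alpha\geq\alpha_n(\beta)>1$ is to guarantee that ${\mathcal G}(\varphi,\vartheta;\alpha,\gamma)=\cos^2\vartheta-\alpha+\gamma\cos\vartheta\sin\vartheta\cos\varphi$ does not change sign on the domain, so that the absolute value can be dropped (up to an overall sign) and the $\varphi$-integral of $\cos\varphi$ contributes nothing while the constant part integrates cleanly.

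First I would establish the sign condition: for $\vartheta\in[0,\pi/2]$ one has $\cos^2\vartheta-\alpha\leq 1-\alpha<0$, and the perturbation term is bounded in modulus by $\gamma\cos\vartheta\sin\vartheta\leq\gamma/2$; so ${\mathcal G}<0$ throughout whenever $\alpha-1>\gamma/2$, i.e. for small $\gamma$. For large $\gamma$ the factor $(1+\gamma^2)^{-1/2}$ decays, so the competition is genuine and the transcendental equation (\ref{EH_XBB}) is precisely where the two regimes balance. Concretely, when ${\mathcal G}\leq 0$ the inner double integral equals $\int\!\!\int(\alpha-\cos^2\vartheta-\gamma\cos\vartheta\sin\vartheta\cos\varphi)\cos^{\beta-n}\vartheta\sin^{n-2}\vartheta\sin^{n-3}\varphi\,d\vartheta\,d\varphi$; the $\cos\varphi$ term drops by symmetry of $\int_0^\pi\cos\varphi\sin^{n-3}\varphi\,d\varphi=0$, and the remaining $\vartheta$-integrals are Beta integrals giving, after using $\int_0^\pi\sin^{n-3}\varphi\,d\varphi=\sqrt{\pi}\,\Gamma((n-2)/2)/\Gamma((n-1)/2)$ and $\omega_{n-2}=2\pi^{(n-1)/2}/\Gamma((n-1)/2)$, the value $|k|\beta$ times $\pi^{(n-1)/2}\Gamma((\beta-n+1)/2)\big((\alpha-1)\beta+n-1\big)/\Gamma(\beta/2)$ at $\gamma=0$, which is (\ref{EH_Y001AA}). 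One then must show the function $\gamma\mapsto(1+\gamma^2)^{-1/2}\times(\text{double integral})$ is maximized at $\gamma=0$ under $\alpha\geq\alpha_n(\beta)$: monotonicity of the $\gamma=0$-branch is immediate from the $(1+\gamma^2)^{-1/2}$ factor as long as ${\mathcal G}\leq 0$ persists, and for the range of $\gamma$ where ${\mathcal G}$ does change sign one estimates the double integral from above (the right side of (\ref{EH_XBB}) encodes exactly the threshold $\gamma=\sqrt{1+(\alpha-1)^2}-1$ type bound coming from $|{\mathcal G}|\leq \alpha-\cos^2\vartheta$ versus the gain from the cross term) and checks the product stays below the $\gamma=0$ value; this comparison is what the definition of $\alpha_n(\beta)$ as the root of (\ref{EH_XBB}) is designed to make tight.

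Finally, the last sentence — that $\sup|\nabla u(x)|$ equals $\sup|\partial u/\partial x_n|$ — follows because the supremum over $\gamma\geq 0$ is attained at $\gamma=0$, i.e. at $\bs z=\bs e_n$: by definition $\gamma=|\bs z'|/z_n$, so $\gamma=0$ means the extremal direction is the inward normal, hence $C_{\alpha,\beta,\infty}=C_{\alpha,\beta,\infty}(\bs e_n)$, and (\ref{EH_2A}) with $\bs z=\bs e_n$ gives exactly the sharp coefficient for $|\partial u/\partial x_n|$, just as in Theorems~\ref{C_1_1} and \ref{C_2_2}.

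I expect the main obstacle to be the last monotonicity/comparison step: unlike the $p=1$ and $p=2$ cases, where the $\gamma$-dependence was an explicit quadratic ${\mathcal I}_1+\gamma^2{\mathcal I}_2$ and the maximum over $\gamma$ was elementary, here the inner integral depends on $\gamma$ through $|{\mathcal G}|$ in a genuinely nonsmooth way once ${\mathcal G}$ changes sign, so one cannot simply differentiate. The argument will require carefully splitting the $(\varphi,\vartheta)$ domain into the region where ${\mathcal G}\geq 0$ and its complement, bounding the contribution of the positivity region (which only appears for $\gamma$ large enough that $\gamma\cos\vartheta\sin\vartheta\cos\varphi>\alpha-\cos^2\vartheta$ somewhere), and showing the net effect, after multiplication by $(1+\gamma^2)^{-1/2}$, never exceeds the clean $\gamma=0$ value — with equality governed precisely by equation (\ref{EH_XBB}).
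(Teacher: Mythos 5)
Your skeleton matches the paper's: pass to the $p\to\infty$ limit of Proposition~\ref{P_2} to get $C_{\alpha,\beta,\infty}=\sup_{\gamma\ge0}J(\gamma)/\sqrt{1+\gamma^2}$ with $J(\gamma)$ the double integral of $|{\mathcal G}|$, compute the $\gamma=0$ value (your Beta-integral evaluation and the vanishing of $\int_0^\pi\cos\varphi\sin^{n-3}\varphi\,d\varphi$ are exactly right and yield (\ref{EH_Y001AA})), and then show the supremum is attained at $\gamma=0$, which also gives the normal-derivative statement. But the decisive comparison step --- the one you yourself flag as ``the main obstacle'' --- is left as a sketch, and the route you propose for it (splitting the $(\varphi,\vartheta)$ domain according to the sign of ${\mathcal G}$ and bounding the positivity region) is not carried out and is harder than necessary. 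As it stands the proof is incomplete precisely at the point where the hypothesis $\alpha\ge\alpha_n(\beta)$ must be used quantitatively.

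The missing device is elementary: apply the triangle inequality \emph{globally}, $|{\mathcal G}|\le|\alpha-\cos^2\vartheta|+\gamma\cos\vartheta\sin\vartheta|\cos\varphi|$, to get $J(\gamma)\le J_1+\gamma J_2$ for \emph{all} $\gamma\ge0$, where $J_1$ is the constant you already computed and $J_2$ is the explicit integral of the cross term (no domain splitting in $(\varphi,\vartheta)$ at all). An exercise in one variable shows $(J_1+\gamma J_2)/\sqrt{1+\gamma^2}\le J_1$ exactly for $\gamma\ge\gamma_0:=2(J_2/J_1)/(1-(J_2/J_1)^2)$, provided $J_2<J_1$. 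Since $J_2/J_1$ equals the left-hand side of (\ref{EH_XBB}), the condition $\alpha\ge\alpha_n(\beta)$ is equivalent to $J_2/J_1\le(\alpha-1)/(1+\sqrt{1+(\alpha-1)^2})$, which unwinds to $\gamma_0\le\alpha-1$. For the complementary range $0\le\gamma\le\gamma_0\le\alpha-1$ one has ${\mathcal G}\le0$ identically (here the paper uses the crude bound $\gamma\cos\vartheta\sin\vartheta|\cos\varphi|\le\gamma$, not your sharper $\gamma/2$), so $J(\gamma)=J_1$ exactly and $J(\gamma)/\sqrt{1+\gamma^2}\le J_1$ trivially. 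The two cases together give $\sup_\gamma J(\gamma)/\sqrt{1+\gamma^2}=J_1=J(0)$. You should also record the (easy) existence and uniqueness of the root $\alpha_n(\beta)$ on $(1,\infty)$, which follows from the strict monotonicity of the two sides of (\ref{EH_XBB}) and their values and limits at $\alpha=1$ and $\alpha\to\infty$.
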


\begin{proof} First of all, we show that equation (\ref{EH_XBB}) has only one $\alpha$-root $\alpha_n(\beta )$ on the 
interval $(1, +\infty )$ for any fixed $\beta >n-1$. In fact,
the function
\begin{equation} \label{Eq_f}
f(\alpha)=\frac{2\Gamma\left ( \frac{\beta -n}{2}+1\right )}{\sqrt{\pi}\big ( \beta (\alpha-1)+n-1 \big )
\Gamma\left ( \frac{ \beta -n+1}{2}\right ) }
\end{equation}
decreases, and the function
\begin{equation} \label{Eq_g}
g(\alpha)=\frac{\alpha-1}{1+\sqrt{1+(\alpha-1)^2}}
\end{equation}
increases on the interval $[1, \infty )$. The functions $f$, $g$ are continuous, $f(1) >0, g(1)=0$, and
$$
\lim_{\alpha\rightarrow +\infty } f(\alpha)=0,\;\;\;\;\lim_{\alpha\rightarrow +\infty } g(\alpha)=1\;.
$$
So, the existence and uniqueness of the $\alpha$-root $\alpha_n(\beta )$ of equation (\ref{EH_XBB}) 
on the interval $(1, +\infty )$ are proven.

The equality (\ref{EH_YAA}) for the sharp coefficient ${\mathcal C}_{\alpha, \beta, \infty}(x)$ in (\ref{EH_XAA}) 
was proved in Proposition \ref{P_1}.
We pass to the limit as $p \rightarrow \infty $ in (\ref{EH_1ABC}) and (\ref{EH_1AC}). This results at
\begin{equation} \label{EH_inf1AA}
C_{\alpha, \beta, \infty}=|k|\beta\sup _{\gamma \geq 0}\;
\frac{\omega _{n-2}}{\sqrt{1+\gamma^2}}\!\!\int _ {0}^{\pi}\sin ^{n-3}
\varphi d\varphi \!\!\int _ {0}^{\pi/2}
\big |{\mathcal G}(\varphi, \vartheta ; \alpha, \gamma)\big |\cos^{\beta -n}\vartheta \sin^{n-2}\vartheta d\vartheta ,
\end{equation}
where ${\mathcal G}(\varphi, \vartheta ;\alpha, \gamma)$ is defined by (\ref{E_AUX}).

Suppose that $\beta>n-1$ and $\alpha\geq \alpha_n(\beta )$ are fixed. We introduce three integrals
\begin{equation} \label{Eq_a1}
J(\gamma)=|k|\beta\omega _{n-2}\int _ {0}^{\pi}\sin ^{n-3}
\varphi d\varphi \!\!\int _ {0}^{\pi/2}
\big |\alpha-\cos^2 \vartheta -\gamma\cos \vartheta
\sin \vartheta\cos \varphi \big |\cos^{\beta -n}\vartheta \sin^{n-2}\vartheta d\vartheta\;,
\end{equation}
\begin{equation} \label{Eq_a3}
J_1=|k|\beta\omega _{n-2}\int _ {0}^{\pi}\sin ^{n-3}
\varphi d\varphi \!\!\int _ {0}^{\pi/2}
\big |\alpha-\cos^2 \vartheta  \big |\cos^{\beta -n}\vartheta \sin^{n-2}\vartheta d\vartheta\;, 
\end{equation}
and
$$
J_2=|k|\beta\omega _{n-2}\int _ {0}^{\pi}\sin ^{n-3}
\varphi |\cos \varphi | d\varphi \!\!\int _ {0}^{\pi/2}
\cos^{\beta -n+1}\vartheta \sin^{n-1}\vartheta d\vartheta .
$$
We note that 
\begin{equation} \label{Eq_a4A}
\frac{J(\gamma)}{\sqrt{1+\gamma^2}}\leq\frac{J_1+\gamma J_2}{\sqrt{1+\gamma^2}}\;.
\end{equation}

Calculating $J_1$ and $J_2$, we obtain
\begin{equation} \label{Eq_a5}
J_1=|k|\frac{\pi^{(n-1)/2}\;\Gamma\left ( \frac{\beta -n+1}{2}\right )}{
\Gamma\left ( \frac{\beta}{2}\right )}\big ( (\alpha-1)\beta+n-1 \big )\;,
\end{equation}
and 
\begin{equation} \label{Eq_a6}
J_2=|k|\frac{2\pi^{(n-2)/2}\;\Gamma\left ( \frac{\beta -n}{2}+1\right )}
{\Gamma\left ( \frac{\beta}{2}\right )}\;.
\end{equation}

It follows from (\ref{Eq_a5}) and (\ref{Eq_a6}) that
\begin{equation} \label{Eq_a7}
\frac{J_2}{J_1}=\frac{2\Gamma\left ( \frac{\beta -n}{2}+1\right )}{\sqrt{\pi}\big ( (\alpha-1)\beta+n-1 \big )
\Gamma\left ( \frac{ \beta -n+1}{2}\right ) }\;.
\end{equation}
We note that the right-hand side of the last equality coincides with the function $f(\alpha)$, defined by (\ref{Eq_f}).
Since $f(\alpha)\leq g(\alpha)$ for $\alpha\geq \alpha_n(\beta )$ and $g(\alpha)<1$, by (\ref{Eq_a7}) we conclude that 
\begin{equation} \label{Eq_a7A}
\frac{J_2}{J_1}=f(\alpha)<1
\end{equation}
for $\alpha\geq \alpha_n(\beta )$. We find the interval of $\gamma$ for which the inequality 
\begin{equation} \label{Eq_a8}
\frac{J_1+\gamma J_2}{\sqrt{1+\gamma^2}}\leq J_1
\end{equation}
holds. Solving inequality (\ref{Eq_a8}) with respect to $\gamma$, we obtain
\begin{equation} \label{Eq_a9}
\gamma \geq \frac{2J_1J_2}{J_1^2 - J_2^2}=\frac{2(J_2/J_1)}{1-(J_2/J_1)^2}\;.
\end{equation}
We denote
$$
\gamma _0=\frac{2(J_2/J_1)}{1-(J_2/J_1)^2}\;.
$$
By (\ref{Eq_a4A}), (\ref{Eq_a8}) and (\ref{Eq_a9}),
\begin{equation} \label{Eq_a9AB}
\frac{J(\gamma)}{\sqrt{1+\gamma^2}}\leq J_1\;\;{\rm for}\;\;\gamma \geq \gamma_0\;.
\end{equation}

Now, we show that $\alpha -1-\gamma_0 \geq 0$ for $\alpha\geq \alpha_n(\beta )$. Taking into account that 
$f(\alpha)\leq g(\alpha)$ for $\alpha\geq \alpha_n(\beta )$, by (\ref{Eq_g}) and (\ref{Eq_a7A}) we arrive at inequality
\begin{equation} \label{Eq_a9ABC}
\frac{J_2}{J_1}\leq \frac{\alpha-1}{1+\sqrt{1+(\alpha-1)^2}}\;.
\end{equation}
Using (\ref{Eq_a9ABC}), after calculations we obtain
$$
\gamma_0=\frac{2(J_2/J_1)}{1-(J_2/J_1)^2}\leq \alpha -1\;,
$$
which proves the inequality $\alpha -1-\gamma_0 \geq 0$ for $\alpha\geq \alpha_n(\beta )$.

Let $0\leq\gamma\leq \gamma_0$. Taking into account that $\alpha -1-\gamma \geq 0$, by (\ref{Eq_a1}) and (\ref{Eq_a3}) 
we have $J(\gamma)=J_1$. Hence,
$$
\frac{J(\gamma)}{\sqrt{1+\gamma^2}}=\frac{J_1}{\sqrt{1+\gamma^2}}\leq J_1\;\;{\rm for}\;\;0\leq\gamma \leq \gamma_0\;,
$$
which together with (\ref{Eq_a9AB}) leads to inequality
$$
\frac{J(\gamma)}{\sqrt{1+\gamma^2}}\leq J_1
$$
for any $\gamma\geq 0$. Therefore, in view of (\ref{EH_inf1AA})-(\ref{Eq_a3}), we obtain
\begin{equation} \label{Eq_a9ABCD}%$$
C_{\alpha, \beta, \infty}=\sup_{\gamma \geq 0}\frac{J(\gamma)}{\sqrt{1+\gamma^2}}\leq J_1=J(0)\;,
\end{equation}%$$
which together with inequality
$$
\sup_{\gamma\geq 0}\frac{J(\gamma)}{\sqrt{1+\gamma^2}}\geq J(0)
$$
results at
$$
C_{\alpha, \beta, \infty}=J(0)=J_1\;.
$$
In view of (\ref{Eq_a5}), the last equality proves (\ref{EH_Y001AA}). 
Since $\gamma =|\bs z'|/z_n$ and the supremum with respect to $\gamma$ in (\ref{Eq_a9ABCD}) is attained at $\gamma=0$, we conclude that the coefficient ${\mathcal C}_{\alpha, \beta, \infty}(x)=C_{\alpha, \beta, \infty}x_n^{n-2+\beta(\alpha-1)}$ is sharp under conditions of the Theorem also in the weaker inequality, obtained from $(\ref{EH_XAA})$ by replacing $\nabla u$ by $\partial u/\partial x_n$.
\end{proof}

\begin{remark} As an example, we give a number of values of $\alpha_n(\beta)$, obtained by numerical solution of equation (\ref{EH_XBB}):
$$
\alpha_3(2.5)\approx 1.2865,\;\;\alpha_3(3)\approx 1.4101,\;\;\alpha_3(3.5)\approx 1.4788,\;\;\alpha_3(4)\approx 1.521,\;\;\alpha_3(4.5)\approx 1.5482,\;\;\alpha_3(5)\approx 1.5664,
$$
$$
\alpha_4(3.5)\approx 1.207,\;\;\alpha_4(4)\approx 1.3079,\;\;\alpha_4(4.5)\approx 1.3698,\;\;\alpha_4(5)\approx 1.4115,\;\;\alpha_4(5.5)\approx 1.4413,\;\;\alpha_4(6)\approx 1.4631,
$$
$$
\alpha_5(4.5)\approx 1.1623,\;\;\alpha_5(5)\approx 1.2469,\;\;\alpha_5(5.5)\approx 1.3016,\;\;\alpha_5(6)\approx 1.3403,\;\;\alpha_5(6.5)\approx 1.3693,\;\;\alpha_5(7)\approx 1.3917,
$$
$$
\alpha_6(5.5)\approx 1.1316,\;\;\alpha_6(6)\approx 1.2063,\;\;\alpha_6(6.5)\approx 1.2548,\;\;\alpha_6(7)\approx 1.2903,\;\;\alpha_6(7.5)\approx 1.3176,\;\;\alpha_6(8)\approx 1.3393.
$$
\end{remark}

\bigskip
The representation for $C_{\alpha, \beta, \infty}$ with $\alpha\in (0, 1]$, obtained in the following auxiliary assertion, will be used later.
 
\begin{lemma} \label{L_2A} Let $0<\alpha \leq 1$ and $\beta>n-1$. Let $f\in L^\infty({\mathbb R}^{n-1})$,
and let  $x $ be  an arbitrary point in ${\mathbb R}^n _+$.
The sharp coefficient ${\mathcal C}_{\alpha, \beta, \infty} (x)$ in the inequality
\begin{equation} \label{EH_XAAB}
|\nabla u(x)|\leq {\mathcal C}_{\alpha, \beta, \infty}( x)\big|\!\big |f \big |\!\big |_\infty
\end{equation}
is given by
\begin{equation} \label{EH_XAAC}
{\mathcal C}_{\alpha, \beta, \infty} (x)=C_{\alpha, \beta, \infty} \; x_n^{n-2+\beta(\alpha-1)},
\end{equation}
where
\begin{equation} \label{EA_idnX}
C_{\alpha, \beta, \infty}=|k|\beta\sup _{\gamma \geq 0}\frac{ \omega _{n-2}}{\sqrt{1+\gamma^2}}\left \{ -c_{n, \beta}(\alpha)+2\int _ {0}^{\pi} 
P\big ( h_\gamma(\varphi )\big )\sin ^{n-3}\varphi d\varphi\right \} .
\end{equation}
Here
\begin{equation} \label{EC_idnX}
P(z)=\frac{(2\alpha)^{\beta-n+1}z^{n-1}}{\beta(4\alpha^2 +z^2)^{\beta/2}}+\int _ {0}^{\arctan \frac{z}{2\alpha}} 
\left \{ \!\frac{\beta\!-\!n\!+\!1}{\beta}\!-\!\alpha \!+\!\gamma\cos \varphi\cos \vartheta\sin\vartheta
\!\right \}\!\cos ^{\beta\!-\!n}\vartheta\sin^{n\!-\!2}\vartheta d\vartheta\;,
\end{equation}
\begin{equation} \label{EC_iii}
c_{n, \beta}(\alpha)=\left (\frac{\beta-n+1}{\beta} -\alpha \right )\frac{\sqrt{\pi}\Gamma\left (\frac{n-2}{2} \right )
\Gamma\left (\frac{\beta-n+1}{2} \right )}{2\Gamma\left (\frac{\beta}{2} \right )}\;,
\end{equation}
and
\begin{equation} \label{EC_jjj}
h_\gamma(\varphi)=\gamma \cos \varphi + \Big (\gamma ^2\cos ^2\varphi +4\alpha(1-\alpha )\Big )^{1/2}\;.
\end{equation}
\end{lemma}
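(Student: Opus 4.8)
The plan is as follows. Identity $(\ref{EH_XAAC})$ is immediate from Proposition~\ref{P_1}: at $p=\infty$ the exponent $2-n+\beta(1-\alpha)+(n-1)/p$ in $(\ref{EH_3J})$ equals $2-n+\beta(1-\alpha)$, so ${\mathcal C}_{\alpha,\beta,\infty}(x)=C_{\alpha,\beta,\infty}\,x_n^{\,n-2+\beta(\alpha-1)}$. For the constant itself I would start from the representation obtained by letting $p\to\infty$ in Proposition~\ref{P_2}, that is from formula $(\ref{EH_inf1AA})$ established in the proof of Theorem~\ref{SP_2AAA} (that passage to the limit uses no relation between $\alpha$ and $\beta$, so it is available here). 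Thus the whole task reduces to rewriting the inner double integral
$$I(\gamma)=\int_0^\pi\sin^{n-3}\varphi\,d\varphi\int_0^{\pi/2}\bigl|{\mathcal G}(\varphi,\vartheta;\alpha,\gamma)\bigr|\cos^{\beta-n}\vartheta\,\sin^{n-2}\vartheta\,d\vartheta$$
into the bracket displayed in $(\ref{EA_idnX})$; the essential point is the removal of the absolute value.

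First I would resolve the sign of ${\mathcal G}(\varphi,\vartheta;\alpha,\gamma)=\cos^2\vartheta-\alpha+\gamma\cos\vartheta\sin\vartheta\cos\varphi$ as $\vartheta$ runs over $[0,\pi/2]$, with $\varphi\in(0,\pi)$ and $\gamma\ge 0$ fixed. Substituting $t=\tan\vartheta$ gives ${\mathcal G}=(1+t^2)^{-1}\bigl(-\alpha t^2+\gamma\cos\varphi\,t+1-\alpha\bigr)$; since $0<\alpha\le 1$, the numerator is a concave quadratic in $t$, nonnegative at $t=0$ and tending to $-\infty$, hence it has a unique positive root $t_\ast=h_\gamma(\varphi)/(2\alpha)$ with $h_\gamma$ as in $(\ref{EC_jjj})$ (the inequality $\sqrt{\gamma^2\cos^2\varphi+4\alpha(1-\alpha)}>|\gamma\cos\varphi|$ keeps $t_\ast\ge 0$ even when $\cos\varphi<0$, and the picture degenerates harmlessly to $t_\ast=0$ only in the limiting case $\alpha=1$, $\cos\varphi\le 0$). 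Consequently ${\mathcal G}\ge 0$ on $[0,\vartheta_\ast(\varphi)]$ and ${\mathcal G}<0$ on $(\vartheta_\ast(\varphi),\pi/2]$, where $\vartheta_\ast(\varphi)=\arctan\bigl(h_\gamma(\varphi)/(2\alpha)\bigr)$. Writing $A(\varphi)$ and $B(\varphi)$ for the integral of ${\mathcal G}\cos^{\beta-n}\vartheta\sin^{n-2}\vartheta$ over $[0,\pi/2]$ and over $[0,\vartheta_\ast(\varphi)]$ respectively, the inner integral in $I(\gamma)$ equals $2B(\varphi)-A(\varphi)$.

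Next I would evaluate the two contributions. For $A$: expanding ${\mathcal G}$ into its three terms, the one carrying $\gamma\cos\varphi$ integrates to $0$ against $\sin^{n-3}\varphi$ over $[0,\pi]$ by oddness about $\varphi=\pi/2$, while the other two yield elementary Beta integrals which, after the reductions $\Gamma(x+1)=x\Gamma(x)$ and $\int_0^\pi\sin^{n-3}\varphi\,d\varphi=\sqrt{\pi}\,\Gamma((n-2)/2)/\Gamma((n-1)/2)$, collapse to exactly $\int_0^\pi A(\varphi)\sin^{n-3}\varphi\,d\varphi=c_{n,\beta}(\alpha)$ from $(\ref{EC_iii})$. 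For $B$: the identity $\beta\cos^{\beta-n+2}\vartheta\sin^{n-2}\vartheta=\frac{d}{d\vartheta}\bigl[\cos^{\beta-n+1}\vartheta\sin^{n-1}\vartheta\bigr]+(\beta-n+1)\cos^{\beta-n}\vartheta\sin^{n-2}\vartheta$ lets one integrate by parts the $\cos^2\vartheta$–part of $B(\varphi)$ (the boundary term at $\vartheta=0$ vanishes since $n-1\ge 2$, and $\beta>n-1$ ensures convergence of all integrals in sight). Evaluating the resulting boundary term at $\vartheta_\ast(\varphi)$ through $\cos\vartheta_\ast=2\alpha/\sqrt{4\alpha^2+h_\gamma(\varphi)^2}$ and $\sin\vartheta_\ast=h_\gamma(\varphi)/\sqrt{4\alpha^2+h_\gamma(\varphi)^2}$ produces $\frac{(2\alpha)^{\beta-n+1}h_\gamma(\varphi)^{n-1}}{\beta(4\alpha^2+h_\gamma(\varphi)^2)^{\beta/2}}$, and what remains is precisely the integral in $(\ref{EC_idnX})$ with upper limit $\arctan(h_\gamma(\varphi)/(2\alpha))$; hence $B(\varphi)=P\bigl(h_\gamma(\varphi)\bigr)$. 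Here one records that the coefficient $\gamma\cos\varphi$ written under the integral in $(\ref{EC_idnX})$ is a genuine function of $z=h_\gamma(\varphi)$: squaring $(\ref{EC_jjj})$ gives $\gamma\cos\varphi=\bigl(z^2-4\alpha(1-\alpha)\bigr)/(2z)$.

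Combining the two pieces, $I(\gamma)=2\int_0^\pi P(h_\gamma(\varphi))\sin^{n-3}\varphi\,d\varphi-c_{n,\beta}(\alpha)$, and substituting this into $(\ref{EH_inf1AA})$ yields $(\ref{EA_idnX})$. The main obstacle is the sign analysis of ${\mathcal G}$ — establishing the single-positive-root structure uniformly in $\varphi$ and $\gamma$ and checking that it degenerates gracefully — after which the proof is routine bookkeeping with Beta integrals together with one integration by parts.
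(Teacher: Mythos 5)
Your proposal is correct and follows essentially the same route as the paper: passage to the limit $p\to\infty$ to get the double-integral representation, location of the sign change of ${\mathcal G}$ via the quadratic in $\tan\vartheta$ yielding the root $h_\gamma(\varphi)/(2\alpha)$, the decomposition $\int|{\mathcal G}|=2\int_0^{\vartheta_\ast}{\mathcal G}-\int_0^{\pi/2}{\mathcal G}$, the Beta-integral evaluation of $c_{n,\beta}(\alpha)$ using the oddness of $\cos\varphi$, and the same reduction (integration by parts) producing the boundary term $(2\alpha)^{\beta-n+1}h^{n-1}/\bigl(\beta(4\alpha^2+h^2)^{\beta/2}\bigr)$. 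Your additional remarks — the explicit sign analysis of the concave quadratic (including the degenerate case $\alpha=1$, $\cos\varphi\le 0$) and the observation that $\gamma\cos\varphi=(z^2-4\alpha(1-\alpha))/(2z)$ so that $P$ genuinely depends only on $z$ — are correct refinements of points the paper treats as obvious.
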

\begin{proof} The formula (\ref{EH_XAAC}) for the sharp coefficient ${\mathcal C}_{\alpha, \beta, \infty}(x)$ in (\ref{EH_XAAB}) 
was proved in Proposition \ref{P_1}. By (\ref{EH_inf1AA}) and (\ref{E_AUX}) we have
\begin{equation} \label{EH_inf1}
C_{\alpha, \beta, \infty}=|k|\beta\sup _{\gamma \geq 0}\;
\frac{\omega _{n-2}}{\sqrt{1+\gamma^2}}\!\!\int _ {0}^{\pi}\sin ^{n-3}
\varphi d\varphi \!\!\int _ {0}^{\pi/2}
\big |{\mathcal G}(\varphi, \vartheta ; \alpha, \gamma)\big |\cos^{\beta -n}\vartheta \sin^{n-2}\vartheta d\vartheta ,
\end{equation}
where 
\begin{equation} \label{EH_inf_01AABC}
{\mathcal G}(\varphi, \vartheta ;\alpha, \gamma)=\cos^2 \vartheta-\alpha+\gamma\cos \vartheta
\sin \vartheta\cos \varphi .
\end{equation}

First, we calculate the integral
\begin{eqnarray} \label{eqn_001}
c_{n, \beta}(\alpha)&\!\!\!=\!\!\!&\int _ {0}^{\pi}\sin ^{n-3}\varphi d\varphi\int _ {0}^{\pi/2}
{\mathcal G}(\varphi, \vartheta ;\alpha, \gamma)\cos ^{\beta-n} \vartheta\sin^{n-2}\vartheta d\vartheta\nonumber\\
&\!\!\!=\!\!\!&\int _ {0}^{\pi}\!\!\sin ^{n-3}\varphi d\varphi\!\!\int _ {0}^{\pi/2}\!\!
\big \{ \cos^2 \vartheta\! -\!\alpha\!+\!\gamma\cos \vartheta
\sin \vartheta\cos \varphi \big \}\cos ^{\beta-n} \vartheta\sin^{n-2}\vartheta d\vartheta\nonumber\\
&\!\!\!=\!\!\!&\int _ {0}^{\pi}\!\!\sin ^{n-3}\varphi d\varphi\!\!\int _ {0}^{\pi/2}\!\!
\big (\cos^2 \vartheta\! -\!\alpha\big )\cos ^{\beta-n} \vartheta\sin^{n-2}\vartheta d\vartheta\nonumber\\
&\!\!\!=\!\!\!&\left (\frac{\beta-n+1}{\beta} -\alpha \right )\frac{\sqrt{\pi}\Gamma\left (\frac{n-2}{2} \right )
\Gamma\left (\frac{\beta-n+1}{2} \right )}{2\Gamma\left (\frac{\beta}{2} \right )}\;.
\end{eqnarray}

Now, we are looking for a solution of the equation
\begin{equation} \label{EH_inf2}
\cos^2 \vartheta -\alpha+\gamma\cos \vartheta \sin \vartheta\cos \varphi =0
\end{equation}
as a  function $\vartheta$ of $\varphi$. We can rewrite  (\ref{EH_inf2}) as the
second order equation in $\tan \vartheta$:
$$
-\alpha \tan^2\vartheta +\gamma \cos \varphi\tan \vartheta  +(1-\alpha)=0.
$$
Since $0\leq \vartheta \leq \pi/2$, we find that the nonnegative root of this equation is
\begin{equation} \label{EH_inf3}
\vartheta _\gamma(\varphi)=\arctan \frac{h_\gamma(\varphi)}{2\alpha}\;,
\end{equation}
where
\begin{equation} \label{EQN_1}
h_\gamma(\varphi)=\gamma \cos \varphi + \Big (\gamma ^2\cos ^2\varphi +4\alpha(1-\alpha )\Big )^{1/2}\;.
\end{equation}

We calculate the integral
\begin{eqnarray} \label{eqn_002}
H_n(\varphi, \psi; \alpha, \gamma)\!\!\!\!\!\!&=&\!\!\!\!\!\!\int _ {0}^{\psi}
{\mathcal G}(\varphi, \vartheta ; \alpha, \gamma)\cos ^{\beta -n} \vartheta\sin^{n-2}\vartheta d\vartheta \nonumber\\
\!\!\!\!\!\!&=&\!\!\!\!\!\!\int _ {0}^{\psi}\big (\cos^2 \vartheta -\alpha+\gamma\cos \vartheta
\sin \vartheta\cos \varphi  \big )\cos ^{\beta -n} \vartheta\sin^{n-2}\vartheta d\vartheta \nonumber\\
\!\!\!\!\!\!&=&\!\!\!\!\!\!\int _ {0}^{\psi} \big (\cos^2 \vartheta -\alpha \big )\cos ^{\beta -n} \vartheta\sin^{n-2}\vartheta d\vartheta+
\gamma\cos \varphi\int_{0}^{\psi}\cos ^{\beta-n+1}\vartheta\sin^{n-1}\vartheta d\vartheta\nonumber\\
\!\!\!\!\!\!\!&=&\!\!\!\!\!\!\frac{\sin^{n\!-\!1}\psi \cos ^{\beta\!-\!n\!+\!1}\psi}{\beta}\!+\!
\int _ {0}^{\psi}\!\!\!\left \{ \!\!\left (\frac{\beta\!-\!n\!+\!1}{\beta}\!-\!\alpha \right )\!+\!\gamma\cos \varphi\cos \vartheta\sin\vartheta
\!\right \}\!\!\cos ^{\beta\!-\!n}\vartheta\sin^{n\!-\!2}\vartheta d\vartheta .
\end{eqnarray}

Obviously, ${\mathcal G}(\varphi, \vartheta ; \alpha, \gamma) \geq 0$ for $0\leq\vartheta\leq\vartheta _\gamma(\varphi)$ and
${\mathcal G}(\varphi, \vartheta ; \alpha, \gamma) <0$ for $\vartheta _\gamma(\varphi)<\vartheta \leq\pi/2$. Hence,
\begin{eqnarray} \label{EH_inf4_01}
& &\int _ {0}^{\pi}\sin ^{n-3}\varphi d\varphi \!\!\int _ {0}^{\pi/2}
\big |{\mathcal G}(\varphi, \vartheta ; \alpha, \gamma)\big | \cos ^{\beta-n} \vartheta\sin^{n-2}\vartheta d\vartheta \nonumber\\
& &\nonumber\\
& &=\int _ {0}^{\pi}\sin ^{n-3}\varphi d\varphi \!\!\int _ {0}^{\vartheta_\gamma(\varphi)}
{\mathcal G}(\varphi, \vartheta ; \alpha, \gamma) \cos ^{\beta-n} \vartheta\sin^{n-2}\vartheta d\vartheta\nonumber\\
& &\nonumber\\
& &-\int _ {0}^{\pi}\sin ^{n-3}\varphi d\varphi \!\!\int ^{\pi/2}_{\vartheta_\gamma(\varphi)}
{\mathcal G}(\varphi, \vartheta ; \alpha, \gamma) \cos ^{\beta-n} \vartheta\sin^{n-2}\vartheta d\vartheta\;.
\end{eqnarray}
On the other hand, by (\ref{eqn_001}),
\begin{eqnarray} \label{EH_inf4_02}
c_{n,\beta}(\alpha)&\!\!\!\!=\!\!\!\!&\int _ {0}^{\pi}\sin ^{n-3}\varphi d\varphi \!\!\int _ {0}^{\vartheta_\gamma(\varphi)}
{\mathcal G}(\varphi, \vartheta ; \alpha, \gamma) \cos ^{\beta-n} \vartheta\sin^{n-2}\vartheta d\vartheta\nonumber\\
& &\nonumber\\
&\!\!\!\!+\!\!\!\!&\int _ {0}^{\pi}\sin ^{n-3}\varphi d\varphi \!\!\int ^{\pi/2}_{\vartheta_\gamma(\varphi)}
{\mathcal G}(\varphi, \vartheta ; \alpha, \gamma) \cos^{\beta-n} \vartheta\sin^{n-2}\vartheta d\vartheta\;.
\end{eqnarray}
 
Using the equalities (\ref{EH_inf4_01}) and (\ref{EH_inf4_02}), and taking into account (\ref{eqn_002}), 
we rewrite (\ref{EH_inf1}) as
\begin{eqnarray} \label{EH_inf4}
C_{\alpha, \beta, \infty}\!\!\!\!&=&\!\!\!\!|k|\beta\sup _{\gamma \geq 0}\frac{ \omega _{n-2}}{\sqrt{1+\gamma^2}}\left \{ -c_{n, \beta}(\alpha)+2
\int _ {0}^{\pi}\sin ^{n-3}\varphi\;d\varphi \int _ {0}^{\vartheta_\gamma(\varphi )}
{\mathcal G}(\varphi, \vartheta ; \alpha, \gamma)\cos ^{\beta-n} \vartheta\sin^{n-2}\vartheta d\vartheta \right \}\nonumber\\
& &\nonumber\\
&=&|k|\beta\sup _{\gamma \geq 0}\frac{\omega _{n-2}}{\sqrt{1+\gamma^2}}\!\left \{ -c_{n,\beta}(\alpha)+2\int _ {0}^{\pi}\!\!
H_n\big ( \varphi, \vartheta _\gamma(\varphi); \alpha, \gamma  \big ) \sin ^{n-3}\varphi d\varphi \right \}.
\end{eqnarray}

By (\ref{EH_inf3}),
\begin{equation} \label{EH_sin}
\sin \vartheta _\gamma(\varphi)=\frac{h_\gamma(\varphi)}{\sqrt{4\alpha^2+h_\gamma^2(\varphi)}}\; ,
\end{equation}
\begin{equation} \label{EH_cos}
\cos \vartheta _\gamma(\varphi)=\frac{2\alpha}{\sqrt{4\alpha^2+h_\gamma^2(\varphi)} }\;,
\end{equation}
where $h_\gamma(\varphi)$ is defined by (\ref{EQN_1}).

Using (\ref{EH_sin}) and (\ref{EH_cos}), we find
\begin{equation} \label{EH_comb}
\sin ^{n-1}\vartheta_\gamma(\varphi )\cos ^{\beta-n+1}\vartheta_\gamma(\varphi )=\frac{(2\alpha)^{\beta-n+1} h^{n-1}_\gamma(\varphi )}{(4\alpha^2+h^2_\gamma(\varphi ))^{\beta/2}}\;.
\end{equation}
By (\ref{eqn_002}) and (\ref{EH_comb}) we can write $H_n\big ( \varphi, \vartheta _\gamma(\varphi); \alpha, \gamma  \big )$ as
\begin{eqnarray*} 
H_n\big ( \varphi, \vartheta _\gamma(\varphi); \alpha, \gamma  \big )\!\!\!\!&=&\!\!\!\!\frac{(2\alpha)^{\beta-n+1} h^{n-1}_\gamma(\varphi )}{\beta(4\alpha^2+h^2_\gamma(\varphi ))^{\beta/2}}\\
& &\nonumber\\
\!\!\!\!&+&\!\!\!\!\int _ {0}^{\arctan \frac{h_\gamma(\varphi)}{2\alpha}}\left \{ \!\left (\frac{\beta\!-\!n\!+\!1}
{\beta}\!-\!\alpha \right )\!+\!\gamma\cos \varphi\cos \vartheta\sin\vartheta
\!\right \}\!\cos ^{\beta\!-\!n}\vartheta\sin^{n\!-\!2}\vartheta d\vartheta \;,
\end{eqnarray*}
which together with (\ref{EH_inf4}) leads to
\begin{equation} \label{EA_idn}
C_{\alpha, \beta, \infty}=|k|\beta\sup _{\gamma \geq 0}\frac{ \omega _{n-2}}{\sqrt{1+\gamma^2}}\left \{ -c_{n, \beta}(\alpha)+2\int _ {0}^{\pi} 
P\big ( h_\gamma(\varphi )\big )\sin ^{n-3}\varphi d\varphi\right \} ,
\end{equation}
where
\begin{equation} \label{EC_idn}
P(z)=\frac{(2\alpha)^{\beta-n+1}z^{n-1}}{\beta(4\alpha^2 +z^2)^{\beta/2}}+\int _ {0}^{\arctan \frac{z}{2\alpha}} 
\left \{ \!\left (\frac{\beta\!-\!n\!+\!1}{\beta}\!-\!\alpha \right )\!+\!\gamma\cos \varphi\cos \vartheta\sin\vartheta
\!\right \}\!\cos ^{\beta\!-\!n}\vartheta\sin^{n\!-\!2}\vartheta d\vartheta\;.
\end{equation}
Equalities (\ref{EA_idn}), (\ref{EC_idn}) together with (\ref{eqn_001}), (\ref{EQN_1}) prove the Lemma.
\end{proof}

In the next assertion we consider a particular case of (\ref{EA_idnX}) for $\alpha=1$, $\beta \in (n-1, n]$. 
To find the explicit formula for $C_{1, \beta, \infty}$ we solve an extremal problem 
with a scalar parameter in the integrand of a double integral.

\begin{theorem} \label{SP_2} Let $\alpha =1$ and $\beta \in (n-1, n]$. Let $f\in L^\infty({\mathbb R}^{n-1})$,
and let  $x $ be  an arbitrary point in ${\mathbb R}^n _+$.
The sharp coefficient ${\mathcal C}_{1, \beta, \infty} (x)$ in the inequality
\begin{equation} \label{EH_X}
|\nabla u(x)|\leq {\mathcal C}_{1, \beta, \infty}( x)\big|\!\big |f \big |\!\big |_\infty
\end{equation}
is given by
\begin{equation} \label{EH_Y}
{\mathcal C}_{1, \beta, \infty}( x)=\frac{|k|\pi^{(n-1)/2}(n-1)
\Gamma\left (\frac{\beta-n+1}{2} \right ) } {\Gamma\left (\frac{\beta}{2} \right )} x_n^{n-2}\;.
\end{equation}

The absolute value of the derivative of $u$
with respect to the normal to the boundary of the half-space at any 
$x \in {\mathbb R}^n_+$ has the same supremum as $|\nabla u(x)|$.
\end{theorem}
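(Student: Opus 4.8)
The plan is to specialize Lemma~\ref{L_2A} to $\alpha=1$ and to show that in its formula~(\ref{EA_idnX}) the supremum over $\gamma$ is attained at $\gamma=0$. First I would evaluate the constant: by~(\ref{EC_iii}), $\frac{\beta-n+1}{\beta}-1=-\frac{n-1}{\beta}$, so $-c_{n,\beta}(1)=\frac{n-1}{\beta}\cdot\frac{\sqrt{\pi}\,\Gamma(\frac{n-2}{2})\Gamma(\frac{\beta-n+1}{2})}{2\,\Gamma(\frac{\beta}{2})}=:A>0$, and with $\omega_{n-2}=2\pi^{(n-2)/2}/\Gamma(\frac{n-2}{2})$ a short computation gives $|k|\beta\,\omega_{n-2}\,A=\dfrac{|k|\pi^{(n-1)/2}(n-1)\Gamma(\frac{\beta-n+1}{2})}{\Gamma(\frac{\beta}{2})}$, precisely the constant in~(\ref{EH_Y}). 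Next, for $\alpha=1$ formula~(\ref{EC_jjj}) reads $h_\gamma(\varphi)=\gamma\cos\varphi+|\gamma\cos\varphi|$, which equals $2\gamma\cos\varphi$ on $[0,\pi/2]$ and $0$ on $[\pi/2,\pi]$; since $P(0)=0$ by~(\ref{EC_idnX}), the $\varphi$-integral in~(\ref{EA_idnX}) reduces to $\int_0^{\pi/2}$. Using the identity $P(h_\gamma(\varphi))=H_n(\varphi,\vartheta_\gamma(\varphi);1,\gamma)$ from the proof of Lemma~\ref{L_2A}, the factorization $\mathcal{G}(\varphi,\vartheta;1,\gamma)=\sin\vartheta\,(\gamma\cos\varphi\cos\vartheta-\sin\vartheta)$, and $\vartheta_\gamma(\varphi)=\arctan(\gamma\cos\varphi)$, the substitution $t=\tan\vartheta$ turns this into $P(h_\gamma(\varphi))=Q(\gamma\cos\varphi)$, where
$$
Q(c)=\int_0^c\frac{(c-t)\,t^{\,n-1}}{(1+t^2)^{(\beta+2)/2}}\,dt,\qquad Q'(c)=\int_0^c\frac{t^{\,n-1}}{(1+t^2)^{(\beta+2)/2}}\,dt\ \ge\ 0 .
$$
Consequently $C_{1,\beta,\infty}=|k|\beta\,\omega_{n-2}\,\sup_{\gamma\ge0}g(\gamma)$ with $g(\gamma)=(1+\gamma^2)^{-1/2}\bigl(A+2\!\int_0^{\pi/2}Q(\gamma\cos\varphi)\sin^{n-3}\varphi\,d\varphi\bigr)$ and $g(0)=A$. (The same $g$ can be reached directly from~(\ref{EH_inf1AA}) by writing $|\mathcal{G}(\varphi,\vartheta;1,\gamma)|=\sin\vartheta\,|\sin\vartheta-\gamma\cos\varphi\cos\vartheta|$ and using the symmetry $\varphi\mapsto\pi-\varphi$ to replace the $\varphi$-integrand on $[0,\pi]$ by $2\max(\sin\vartheta,\gamma\cos\varphi\cos\vartheta)$ on $[0,\pi/2]$.)

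The crux will be the pointwise majorant
$$
Q(c)\ \le\ \bigl(\sqrt{1+c^2}-1\bigr)\,M_1,\qquad c\ge0,\qquad M_1:=\int_0^\infty\frac{t^{\,n}\,dt}{(1+t^2)^{(\beta+2)/2}}=\int_0^{\pi/2}\sin^n\vartheta\,\cos^{\beta-n}\vartheta\,d\vartheta .
$$
Granting it, monotonicity of $Q$ gives $Q(\gamma\cos\varphi)\le(\sqrt{1+\gamma^2\cos^2\varphi}-1)M_1\le(\sqrt{1+\gamma^2}-1)M_1$, and integrating against $2\sin^{n-3}\varphi$ over $[0,\pi/2]$, together with the identity $A=M_1\int_0^\pi\sin^{n-3}\varphi\,d\varphi$, yields $2\int_0^{\pi/2}Q(\gamma\cos\varphi)\sin^{n-3}\varphi\,d\varphi\le(\sqrt{1+\gamma^2}-1)A$; hence $g(\gamma)\sqrt{1+\gamma^2}=A+2\int_0^{\pi/2}Q(\gamma\cos\varphi)\sin^{n-3}\varphi\,d\varphi\le A\sqrt{1+\gamma^2}$, i.e.\ $g(\gamma)\le A=g(0)$ for every $\gamma\ge0$, with equality only at $\gamma=0$.

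To prove the majorant I would set $h(c)=(\sqrt{1+c^2}-1)M_1-Q(c)$, so $h(0)=0$ and $h'(c)=\frac{cM_1}{\sqrt{1+c^2}}-Q'(c)$. Factoring $\frac{t^{\,n-1}}{(1+t^2)^{(\beta+2)/2}}=\frac{t}{\sqrt{1+t^2}}\cdot\frac{t^{\,n-2}}{(1+t^2)^{(\beta+1)/2}}$ and using that $t\mapsto t/\sqrt{1+t^2}$ is increasing, one gets $Q'(c)\le\frac{c}{\sqrt{1+c^2}}\int_0^\infty\frac{t^{\,n-2}\,dt}{(1+t^2)^{(\beta+1)/2}}=:\frac{c}{\sqrt{1+c^2}}\,M'$, the integral converging since $\beta>n-2$ and equalling $\int_0^{\pi/2}\sin^{n-2}\vartheta\cos^{\beta-n+1}\vartheta\,d\vartheta$. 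Writing $M_1,M'$ through Beta functions and applying $\Gamma(x+\tfrac12)^2\le\Gamma(x)\Gamma(x+1)$ (log-convexity of $\Gamma$) at $x=\tfrac{\beta-n+1}{2}$ and $x=\tfrac{\beta+1}{2}$,
$$
\frac{M_1}{M'}=\frac{n-1}{2}\cdot\frac{\Gamma(\frac{\beta-n+1}{2})}{\Gamma(\frac{\beta-n+2}{2})}\cdot\frac{\Gamma(\frac{\beta+1}{2})}{\Gamma(\frac{\beta+2}{2})}\ \ge\ \frac{n-1}{\sqrt{(\beta-n+1)(\beta+1)}}\ \ge\ \frac{n-1}{\sqrt{n+1}}\ \ge\ 1,
$$
where $\beta-n+1\le1$, $\beta+1\le n+1$ and $n+1\le(n-1)^2$ (valid for $n\ge3$) are used. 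Thus $M'\le M_1$, so $h'(c)\ge\frac{c(M_1-M')}{\sqrt{1+c^2}}\ge0$, hence $h(c)\ge h(0)=0$, which is the majorant. This is exactly where $\beta\le n$ enters; for $\beta>n$ the majorant fails and the supremum in $g$ is attained at some $\gamma>0$, consistent with Theorem~\ref{SP_2AAA}. The main obstacle of the whole proof is guessing this pointwise majorant and carrying out the $\Gamma$-estimate $M'\le M_1$.

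Finally, it follows that $\sup_{\gamma\ge0}g(\gamma)=g(0)=A$, so $C_{1,\beta,\infty}=|k|\beta\,\omega_{n-2}\,A$; inserting this into~(\ref{EH_3J})--(\ref{EH_4J}) with $\alpha=1$ (where the exponent $2-n+\beta(1-\alpha)+((n-1)/p)$ becomes $2-n$) gives~(\ref{EH_Y}). Since $\gamma=|\bs z'|/z_n$ and the supremum is attained only at $\gamma=0$, i.e.\ at $\bs z=\bs e_n$, we get $\sup_{|\bs z|=1}C_{1,\beta,\infty}(\bs z)=C_{1,\beta,\infty}(\bs e_n)=C_{1,\beta,\infty}$, which is the last assertion of the theorem. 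Everything beyond the majorant is routine bookkeeping resting on Proposition~\ref{P_1}, Lemma~\ref{L_2A}, and standard Beta-integral evaluations.
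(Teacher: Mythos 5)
Your proof is correct, and although it starts from the same place as the paper --- Lemma \ref{L_2A} specialized to $\alpha=1$, so that everything reduces to showing that the supremum over $\gamma$ in (\ref{EA_idnX}) is attained at $\gamma=0$ --- the way you establish that maximality is genuinely different. The paper differentiates $F(\gamma)$ in $\gamma$, splits $F'$ into $\Phi_1+\Phi_2$, disposes of $\Phi_1$ by enlarging the inner integral to $\int_0^{\pi/2}$, and shows $\Phi_2<0$ by a further differentiation and a sign inspection of the resulting integrand, where $\beta\le n$ enters through the factors $1-n/\beta$ and $-n/\beta+\cos\varphi$. You instead prove the pointwise majorant $Q(c)\le(\sqrt{1+c^2}-1)M_1$ and combine it with the identity $A=M_1\int_0^\pi\sin^{n-3}\varphi\,d\varphi$, which yields $g(\gamma)\sqrt{1+\gamma^2}\le A\sqrt{1+\gamma^2}$ in one line; the majorant itself rests on the comparison $Q'(c)\le cM'/\sqrt{1+c^2}$ and the Gamma-ratio bound $M'\le M_1$, obtained from log-convexity via $\Gamma(x+\tfrac12)\le\sqrt{x}\,\Gamma(x)$, with $\beta\le n$ entering through $(\beta-n+1)(\beta+1)\le n+1\le(n-1)^2$. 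I checked the individual steps: the substitution $t=\tan\vartheta$ does give $P(h_\gamma(\varphi))=Q(\gamma\cos\varphi)$ when $\alpha=1$; the Beta-integral evaluations of $A$, $M_1$, $M'$ are right; and $n+1\le(n-1)^2$ holds for $n\ge3$ (with equality at $n=3$), which is covered by the paper's standing assumption $n>2$. What your route buys is that the role of the hypothesis $\beta\le n$ is isolated in a single clean Gamma inequality rather than hidden in the sign of a twice-manipulated integrand, and the argument transparently shows where it becomes tight; the cost is having to guess the majorant, whereas the paper's differentiation is mechanical. Both arguments locate the maximizer at $\gamma=0$, i.e.\ $\bs z=\bs e_n$, which gives the final normal-derivative assertion.
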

\begin{proof} The inequality (\ref{EH_X}) follows from (\ref{EH_XAAB}).
By (\ref{EC_jjj}), in the case $\alpha=1$ we have $h_\gamma(\varphi)=2\gamma \cos \varphi $ for 
$\varphi\in [0, \pi/2]$ and $h_\gamma(\varphi)=0$ for $\varphi\in (\pi/2, \pi ]$.
Therefore, by (\ref{EA_idnX})-(\ref{EC_iii}) we obtain
\begin{equation} \label{eqn_009}
C_{1, \beta, \infty}=|k|\beta\sup _{\gamma \geq 0}\frac{\omega _{n-2}}{\sqrt{1+\gamma^2}}\!\left \{ -c_{n,\beta}(1)+2\int _0^{\pi/2}\!\!
U(\gamma\cos\varphi )\sin ^{n-3}\varphi d\varphi \right \}\;,
\end{equation}
where
\begin{equation} \label{eqn_010A}
c_{n, \beta}(1)=\frac{\sqrt{\pi}(1-n)\Gamma\left (\frac{n-2}{2} \right )
\Gamma\left (\frac{\beta-n+1}{2} \right )}{2\beta\Gamma\left (\frac{\beta}{2} \right )}
\end{equation}
and
\begin{equation} \label{eqn_010}
U(z)= \frac{z^{n-1}}{ \beta \big ( 1+z^2 \big )^{\beta/2}} +
\int _0^{\arctan z} \left ( \frac{1-n}{\beta}+z\cos \vartheta\sin \vartheta\right )
\cos^{\beta-n}\vartheta\sin^{n-2}\vartheta d\vartheta \;.
\end{equation}

Denoting
\begin{equation} \label{eqn_011}
F(\gamma)=\frac{1}{\sqrt{1+\gamma^2}}\!\left \{ -c_{n,\beta}(1)+2\int _ {0}^{\pi/2}\!\!
U(\gamma\cos\varphi )\sin ^{n-3}\varphi d\varphi \right \}\;,
\end{equation} 
where $U(z)$ is defined by (\ref{eqn_010}), we can rewrite (\ref{eqn_009}) in the form
\begin{equation} \label{eqn_009A}
C_{1, \beta, \infty}=|k|\beta\omega_{n-2}\sup _{\gamma \geq 0}F(\gamma)\;.
\end{equation}

It follows from (\ref{eqn_011}) that
\begin{equation} \label{eqn_012}
\frac{dF}{d\gamma}=\frac{1}{(1+\gamma^2)^{3/2}}\left \{ c_{n, \beta}(1)\gamma +2\int _ {0}^{\pi/2}\!\!
\left (-\gamma U(\gamma\cos\varphi ) +(1+\gamma^2)\frac{\partial U(\gamma\cos\varphi )}{\partial \gamma }\right )\sin ^{n-3}\varphi d\varphi \right \}\;.
\end{equation}
Differentiating $U(\gamma\cos\varphi )$ with respect to $\gamma$, we obtain
\begin{equation} \label{eqn_013}
\frac{\partial U(\gamma\cos \varphi )}{\partial\gamma}=\cos \varphi\int _0^{\arctan (\gamma \cos \varphi)}\cos^{\beta-n+1}\vartheta \sin ^{n-1}\vartheta d\vartheta\;.
\end{equation}
Substituting $U(\gamma\cos \varphi )$ from (\ref{eqn_010}) and $\partial U(\gamma\cos \varphi )/\partial\gamma$ 
from (\ref{eqn_013}) into (\ref{eqn_012}), we arrive at equality
\begin{equation} \label{eqn_014}
\frac{dF}{d\gamma}=\frac{1}{(1+\gamma^2)^{3/2}}\Big \{ \Phi_1(\gamma )+ \Phi_2(\gamma )\Big \}\;,
\end{equation}
where
$$
\Phi_1(\gamma )=\frac{\gamma(n-1)}{\beta}\left \{-\frac{\sqrt{\pi}\Gamma\left (\frac{n-2}{2} \right )
\Gamma\left (\frac{\beta-n+1}{2} \right )}{2\Gamma\left (\frac{\beta}{2} \right )} +2\int_0^{\pi/2}
\left ( \int _ 0^{\arctan (\gamma \cos\varphi)} \cos^{\beta-n}\vartheta\sin^{n-2}\vartheta d\vartheta \right )\sin ^{n-3}\varphi d\varphi \right \}
$$
and
$$
\Phi_2(\gamma )=2\left\{- \frac{\gamma}{\beta}\int _ {0}^{\pi/2}\!\!\frac{(\gamma \cos\varphi)^{n-1}\sin ^{n-3}\varphi}
{\big ( 1+\gamma^2 \cos^2\varphi \big )^{\beta/2}}d\varphi+
\int _ {0}^{\pi/2}\!\!\left ( \int _ {0}^{\arctan (\gamma \cos\varphi)}\!\!\!\!\!\!\!\!\!\!\!\!\!\!\!\!\!\!\!\!\!\!\! \cos^{\beta-n+1}\vartheta\sin^{n-1}\vartheta d\vartheta\right )\cos\varphi\sin ^{n-3}\varphi d\varphi \right \}\;.
$$
Estimating $\Phi_1(\gamma )$, we obtain
\begin{equation} \label{eqn_015}
\Phi_1(\gamma )\leq \frac{\gamma(n\!-\!1)}{\beta}\left \{-\frac{\sqrt{\pi}\Gamma\left (\frac{n\!-\!2}{2} \right )
\Gamma\left (\frac{\beta\!-\!n\!+\!1}{2} \right )}{2\Gamma\left (\frac{\beta}{2} \right )}\!+2\!\int_0^{\pi/2}\!\!
\left ( \int _ 0^{\pi/2}\! \cos^{\beta-n}\vartheta\sin^{n-2}\vartheta d\vartheta \right )\sin ^{n-3}\varphi d\varphi \right \}=0\;.
\end{equation} 
By differentiating $\Phi_2(\gamma )$, we arrive at equality
$$
\frac {d\Phi_2}{d\gamma }=2 \int _ {0}^{\pi/2}\frac{( \gamma\cos\varphi )^{n-1}}
{\big ( 1+\gamma^2 \cos^2\varphi \big )^{(\beta+2)/2}}\left \{ -\frac{n}{\beta}+\left (1-\frac{n}{\beta}\right )\gamma^2\cos^2 \varphi 
+\cos\varphi \right \}\sin^{n-3}\varphi d\varphi\;.
$$
Therefore, $\Phi '_2(\gamma)<0$ for $\gamma >0$ and any $\beta \in (n-1, n]$.
This together with $\Phi_2(0)=0$ proves inequality $\Phi_2(\gamma)<0$ for $\gamma >0$ and any $\beta \in (n-1, n]$. 
Hence, by (\ref{eqn_014}) and (\ref{eqn_015}), $F'(\gamma)<0$ for $\gamma >0$ and any $\beta \in (n-1, n]$. So, by (\ref{eqn_009A}),
$$
C_{1, \beta, \infty}=|k|\beta\omega_{n-2}F(0)\;,
$$
which, in view of (\ref{eqn_010A})-(\ref{eqn_011}), leads to 
$$
C_{1, \beta, \infty}=-|k|\beta\omega _{n-2}c_{n, \beta}(1)=
|k|\frac{\pi^{(n-1)/2}(n-1)\Gamma\left (\frac{\beta-n+1}{2} \right )}{\Gamma\left (\frac{\beta}{2} \right )}\;.
$$
Combining the last formula with (\ref{EH_XAAC}) in the case $\alpha=1$, we arrive at (\ref{EH_Y}).
Since $\gamma =|\bs z'|/z_n$ and the supremum in (\ref{eqn_009A}) is attained at $\gamma=0$, we conclude that the coefficient ${\mathcal C}_{1, \beta, \infty}(x)=C_{1, \beta, \infty}x_n^{n-2}$ is sharp under conditions of the Theorem also in the weaker inequality, 
obtained from $(\ref{EH_X})$ by replacing $\nabla u$ by $\partial u/\partial x_n$.
\end{proof}

%%%%%%%%%%%%%%%%%%%%%%%%%%%%%%%%%%%%%%%%%%%%%%%%%%%%%%%%%%%%%%%%%
\section{Sharp estimates for harmonic and biharmonic functions}
%%%%%%%%%%%%%%%%%%%%%%%%%%%%%%%%%%%%%%%%%%%%%%%%%%%%%%%%%%%%%%%%%

By $h^p({\mathbb R}^n_+)$ we denote the Hardy space of harmonic functions on ${\mathbb R}^n_+$ which can be represented as the Poisson integral
\begin{equation} \label{Eq_8.1}
v(x)=\frac{2}{\omega_n}\int_{{\mathbb R}^{n-1}}\frac{x_n}{|y-x|^n}v(y')dy'
\end{equation}
with boundary values in $L^p({\mathbb R}^{n-1})$, $1\leq p \leq \infty$. Multiplying (\ref{Eq_8.1}) on $x_n^{n\alpha - 1}$, 
$\alpha\geq 0$, we obtain 
\begin{equation} \label{Eq_8.2A}
x_n^{n\alpha - 1}v(x)=\frac{2}{\omega_n}\int_{{\mathbb R}^{n-1}}\left ( \frac{x_n^\alpha}{|y-x|} \right )^n v(y')dy'.
\end{equation}
On the right-hand side of the last equality is located generalized Poisson integral (\ref{EI_1}) with $k=2/\omega_n$ and $\beta=n$. 

Thus, we can apply the results of previous sections to obtain sharp pointwise estimates for 
$\left |\nabla \big ( x_n^{n\alpha - 1}v(x)\big )\right |$ in terms of the norm $L^p({\mathbb R}^{n-1}), 1\leq p \leq \infty$.

As consequence of Proposition \ref{P_1} and Theorem \ref{P_1_0} with $\beta=n$, we obtain

\begin{corollary} \label{C_8.1}
Let $v \in h^p({\mathbb R}^n_+)$ and let $x $ be  an arbitrary point in ${\mathbb R}^{n} _+$. % and let $\bs z\in {\mathbb S}^{n-1}$. 
The sharp coefficient ${\mathcal C}_{\alpha, n, p} (x)$ in the inequality
$$
\left |\nabla \big ( x_n^{n\alpha - 1}v(x)\big )\right |\leq {\mathcal C}_{\alpha, n, p}( x)\big|\!\big |v \big |\!\big |_p
$$
is given by
$$
{\mathcal C}_{\alpha, n, p}( x)= C_{\alpha, n, p}\; x_{n}^{n\alpha -2-(n-1)/p}\;,
$$
where
$$
C_{\alpha, n, 1}=\frac{2 n}{\omega_n}\sup_{|\bs z|=1}\sup _{\sigma \in {\mathbb S}^{n-1}_+}
\big |\big (\alpha\bs e_{n} -(\bs e_{\sigma}, \bs e_{n})\bs e_{\sigma},\; \bs z \big )\big |
\big (\bs e_{\sigma}, \bs e_{n} \big )^n ,
$$
$$
C_{\alpha, n, p}\!=\!\frac{2 n}{\omega_n}\sup_{|\bs z|=1}
\left \{ \int _ {{\mathbb S}^{n-1}_+ }\!\!
\big |\!\big (\alpha\bs e_{n} \!-\!(\bs e_{\sigma}, \bs e_{n})\bs e_{\sigma}, \bs z \big )\!\big |^{\frac{p}{p\!-\!1}}
\big (\bs e_{\sigma}, \bs e_{n} \big )^{\frac{n}{p-1}}d\sigma \!\right \}^{\frac{p\!-\!1}{p}}
$$
for $1<p<\infty $,
and
$$
C_{\alpha, n, \infty}=\frac{2 n}{\omega_n}\sup_{|\bs z|=1}\int _ {{\mathbb S}^{n-1}_+ }
\big |\big (\alpha\bs e_{n} - (\bs e_{\sigma}, \bs e_{n})\bs e_{\sigma},\; 
\bs z \big )\big |\;d\sigma .
$$

In particular, the sharp constant $C_{0, n, p}$ in the inequality
\begin{equation} \label{Eq_8.7}
\left |\nabla \left \{ \frac{v (x)}{x_n} \right \}\right |
\leq C_{0, n, p} x_{n}^{-2-(n-1)/p}\;\big|\!\big |v \big |\!\big |_p
\end{equation}
is given by $C_{0, n, 1}=2n/\omega_n$, $C_{0, n, \infty}=1$ and
$$
C_{0, n, p}=\frac{2 n}{\omega_n}\left \{\frac{\pi^{\frac{n-1}{2}}\Gamma\left ( \frac{3p+n-1}{2(p-1)} \right )}
{\Gamma\left ( \frac{(n +2)p}{2(p-1)} \right )} \right \}^{\frac{p}{p-1}}
$$
for $1< p< \infty$.

The constant $C_{0, n, p}$ is sharp in conditions of the Corollary also in the weaker inequality 
obtained from $(\ref{Eq_8.7})$ by replacing $\nabla \big ( x_n^{n\alpha - 1}v\big )$ 
by $\partial \big ( x_n^{n\alpha - 1}v\big ) /\partial x_n$.
\end{corollary}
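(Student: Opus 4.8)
The plan is to derive Corollary~\ref{C_8.1} as a direct specialization of Proposition~\ref{P_1} and Theorem~\ref{P_1_0}, applied to the generalized Poisson integral obtained from the Poisson representation of $v\in h^p({\mathbb R}^n_+)$. Indeed, as noted in the preamble to the Corollary, multiplying~(\ref{Eq_8.1}) by $x_n^{n\alpha-1}$ yields
$$
x_n^{n\alpha-1}v(x)=\frac{2}{\omega_n}\int_{{\mathbb R}^{n-1}}\left(\frac{x_n^\alpha}{|y-x|}\right)^n v(y')\,dy',
$$
which is precisely integral~(\ref{EH_1}) with $k=2/\omega_n$, $\beta=n$, and boundary function $f=v|_{x_n=0}$. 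Since $\beta=n>n-1$, condition~(\ref{EQU_1}) holds for every $p\in[1,\infty]$, so Proposition~\ref{P_1} applies. Substituting $k=2/\omega_n$ and $\beta=n$ into formulas~(\ref{EH_2B}), (\ref{EH_3DA}), (\ref{EH_3H}) for $C_{\alpha,\beta,p}(\bs z)$, and taking the supremum over $|\bs z|=1$ as in~(\ref{EH_4J}), gives exactly the three displayed formulas for $C_{\alpha,n,1}$, $C_{\alpha,n,p}$ ($1<p<\infty$), and $C_{\alpha,n,\infty}$; note that in the last one the exponent $\beta-n=0$, so the factor $(\bs e_\sigma,\bs e_n)^{\beta-n}$ disappears. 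The exponent of $x_n$ in the sharp coefficient, namely $2-n+\beta(1-\alpha)+(n-1)/p$, becomes $2-n+n(1-\alpha)+(n-1)/p=2-n\alpha+(n-1)/p$, so ${\mathcal C}_{\alpha,n,p}(x)=C_{\alpha,n,p}\,x_n^{n\alpha-2-(n-1)/p}$ as claimed.

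Next I would specialize further to $\alpha=0$, where Theorem~\ref{P_1_0} gives the explicit value of $C_{0,\beta,p}$. With $\beta=n$ we have $\beta=n-1+1\geq n-1$, so condition~(i) of Theorem~\ref{P_1_0} holds for $p\in[1,\infty)$ and condition~(ii) holds for $p=\infty$; hence the hypotheses of Theorem~\ref{P_1_0} are met for all $p\in[1,\infty]$. From $C_{0,\beta,1}=|k|\beta$ with $|k|=2/\omega_n$ and $\beta=n$ we get $C_{0,n,1}=2n/\omega_n$. For $1<p<\infty$, plugging $\beta=n$ into~(\ref{EQ_expl}) gives
$$
C_{0,n,p}=\frac{2n}{\omega_n}\left\{\frac{\pi^{\frac{n-1}{2}}\,\Gamma\!\left(\frac{3p+n-1}{2(p-1)}\right)}{\Gamma\!\left(\frac{(n+2)p}{2(p-1)}\right)}\right\}^{\frac{p}{p-1}},
$$
since $(\beta-n+3)p+n-1=3p+n-1$ and $(\beta+2)p=(n+2)p$. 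For $p=\infty$ the value $C_{0,n,\infty}=1$ should be obtained either by passing to the limit $p\to\infty$ in the formula above, or more cleanly by evaluating~(\ref{EH_3H}) directly with $\alpha=0$, $\beta=n$, $k=2/\omega_n$: then $C_{0,n,\infty}(\bs z)=\frac{2n}{\omega_n}\int_{{\mathbb S}^{n-1}_+}|(\bs e_\sigma,\bs e_n)(\bs e_\sigma,\bs z)|\,d\sigma$, whose supremum over $\bs z$ is attained at $\bs z=\bs e_n$ (by the argument of Theorem~\ref{P_1_0}, or by the Hölder-type symmetrization there), yielding $\frac{2n}{\omega_n}\int_{{\mathbb S}^{n-1}_+}(\bs e_\sigma,\bs e_n)^2\,d\sigma$; the standard integral $\int_{{\mathbb S}^{n-1}}(\bs e_\sigma,\bs e_n)^2\,d\sigma=\omega_n/n$ then gives $C_{0,n,\infty}=1$. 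The $x_n$-exponent at $\alpha=0$ is $n\cdot0-2-(n-1)/p=-2-(n-1)/p$, matching~(\ref{Eq_8.7}).

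Finally, the assertion that $C_{0,n,p}$ remains sharp in the weaker inequality with $\partial/\partial x_n$ in place of $\nabla$ is inherited verbatim from the corresponding statement in Theorem~\ref{P_1_0}: there it is shown (via the Hölder symmetrization leading to $C_{0,\beta,p}=C_{0,\beta,p}(\bs e_n)$) that the extremal direction $\bs z$ in~(\ref{EH_4J}) is $\bs e_n$, i.e.\ the normal direction, so the supremum of $|(\nabla u,\bs e_n)|$ equals that of $|\nabla u|$; specializing to $\beta=n$, $k=2/\omega_n$ carries this over. I expect no genuine obstacle here, since the Corollary is a pure substitution $\beta=n$, $k=2/\omega_n$ into already-proved general theorems; the only points requiring a word of care are (a) checking the parameter constraint $\beta>(n-1)(p-1)/p$ and the conditions (i)--(ii) of Theorem~\ref{P_1_0} are satisfied at $\beta=n$ for the full range $p\in[1,\infty]$, and (b) handling the endpoint $p=\infty$ for $C_{0,n,\infty}$, which is cleanest to do by direct evaluation of the surface integral rather than by a limiting argument.
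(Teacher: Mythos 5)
Your proposal is correct and matches the paper's intent exactly: the paper gives no separate proof of Corollary \ref{C_8.1}, stating it as an immediate consequence of Proposition \ref{P_1} and Theorem \ref{P_1_0} with $k=2/\omega_n$ and $\beta=n$, which is precisely the substitution you carry out (including the verification of condition (\ref{EQU_1}) and conditions (i)--(ii), the exponent bookkeeping, and the direct evaluation giving $C_{0,n,\infty}=1$).
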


Concretizing Theorem \ref{C_1_1} for $\beta=n$, we arrive at

\begin{corollary} \label{C_8.2} Let $v \in h^1({\mathbb R}^n_+)$
and let $x $ be  an arbitrary point in ${\mathbb R}^{n} _+$. 
The sharp coefficient ${\mathcal C}_{\alpha, n, 1} (x)$ in the inequality
\begin{equation} \label{Eq_8.9}
\left |\nabla \big ( x_n^{n\alpha - 1}v(x)\big )\right |\leq {\mathcal C}_{\alpha, n, 1}( x)\big|\!\big |v \big |\!\big |_1
\end{equation}
is given by
$$
{\mathcal C}_{\alpha, n, 1}(x)=C_{\alpha, n, 1}\;x_{n}^{n(\alpha-1) -1}\;,
$$
where
$$
C_{\alpha, n, 1}=\frac{2n}{\omega_n}|1-\alpha|
$$
if 
\begin{equation} \label{Eq_8.12}
0\leq\alpha\leq\frac{\sqrt{n+1}}{\sqrt{n+1}+1}\;\;\;{\rm or}\;\;\;\alpha\geq\frac{\sqrt{n+1}}{\sqrt{n+1}-1}\;,
\end{equation}
and
$$
C_{\alpha, n, 1}=\frac{2n}{\omega_n}\left ( \frac{n }{2\alpha-1} \right )^{n/2}\left ( \frac{\alpha^2}{n+1} \right )^{(n+2)/2} 
$$
if
$$
\frac{\sqrt{n+1}}{\sqrt{n+1}+1} < \alpha< \frac{\sqrt{n+1}}{\sqrt{n+1}-1}\;.
$$

In particular,
$$
C_{1, n, 1}=\frac{2(n-2)}{n\omega_n}\left \{ \frac{(n-1)^2}{(n-2)(n+1)} \right \}^{(n+2)/2}\;.
$$

If $\alpha$ satisfies condition $(\ref{Eq_8.12})$, then the coefficient ${\mathcal C}_{\alpha, n, 1}( x)$ is sharp 
in conditions of the Corollary also in
the weaker inequality obtained from $(\ref{Eq_8.9})$ by replacing $\nabla \big ( x_n^{n\alpha - 1}v\big )$ 
by $\partial \big ( x_n^{n\alpha - 1}v\big ) /\partial x_n$.
\end{corollary}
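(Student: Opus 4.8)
The plan is to read Corollary~\ref{C_8.2} off from Theorem~\ref{C_1_1} in the special case $\beta=n$, $k=2/\omega_n$, using that the rescaled Poisson integral of a harmonic function is itself a generalized Poisson integral. First I would record identity $(\ref{Eq_8.2A})$: multiplying the Poisson representation $(\ref{Eq_8.1})$ through by $x_n^{n\alpha-1}$ exhibits $x_n^{n\alpha-1}v(x)$ as the integral $(\ref{EH_1})$ with $f$ equal to the boundary trace of $v$ (so $f\in L^1(\mathbb R^{n-1})$), with parameters $\alpha\ge 0$ and $\beta=n$, and with $k=2/\omega_n$. Since $\beta=n>0$, condition $(\ref{EQU_1})$ holds for $p=1$, so Theorem~\ref{C_1_1} applies to $u=x_n^{n\alpha-1}v$ and controls the sharp coefficient in $(\ref{Eq_8.9})$.

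Next I would substitute $\beta=n$ and $k=2/\omega_n$ into the conclusion of Theorem~\ref{C_1_1}. The power of $x_n$ in $(\ref{Eq_C_1.2})$ becomes $1+n(1-\alpha)$, so $\mathcal C_{\alpha,n,1}(x)=C_{\alpha,n,1}\,x_n^{n(\alpha-1)-1}$; the threshold values $\sqrt{1+\beta}/(\sqrt{1+\beta}\pm 1)$ become $\sqrt{n+1}/(\sqrt{n+1}\pm1)$, so that $(\ref{EH_3AA})$ turns into $(\ref{Eq_8.12})$; formula $(\ref{EH_2BAA})$ gives $C_{\alpha,n,1}=(2n/\omega_n)|1-\alpha|$ on the range $(\ref{Eq_8.12})$; and formula $(\ref{EH_4AA})$ gives $C_{\alpha,n,1}=(2n/\omega_n)(n/(2\alpha-1))^{n/2}(\alpha^2/(n+1))^{(n+2)/2}$ on the remaining interval $\sqrt{n+1}/(\sqrt{n+1}+1)<\alpha<\sqrt{n+1}/(\sqrt{n+1}-1)$.

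For the stated value of $C_{1,n,1}$ I would note that $\alpha=1$ lies strictly inside that intermediate interval, since $\sqrt{n+1}<\sqrt{n+1}+1$ forces $\sqrt{n+1}/(\sqrt{n+1}+1)<1$ and $\sqrt{n+1}-1<\sqrt{n+1}$ forces $1<\sqrt{n+1}/(\sqrt{n+1}-1)$; hence the second formula applies, and plugging $\alpha=1$, $\beta=n$ in and collecting the powers of $n-2$, $n-1$, $n$ and $n+1$ gives the displayed expression. Finally, the sharpness of $\mathcal C_{\alpha,n,1}(x)$ in the weaker inequality with $\partial\big(x_n^{n\alpha-1}v\big)/\partial x_n$ in place of $\nabla\big(x_n^{n\alpha-1}v\big)$, for $\alpha$ in the range $(\ref{Eq_8.12})$, is immediate from the corresponding statement of Theorem~\ref{C_1_1}, because $u=x_n^{n\alpha-1}v$ is exactly a function of the form $(\ref{EH_1})$.

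The proof presents no analytic difficulty; it is a direct specialization, and the only genuine work is the algebraic simplification yielding the $\alpha=1$ constant. The one step to watch is verifying that the $\alpha$-intervals transform correctly under $\beta\mapsto n$ and that $\alpha=1$ ends up in the intermediate regime rather than where $(\ref{EH_2BAA})$ would force the constant to vanish; this is settled by the elementary monotonicity of $t\mapsto\sqrt{1+t}/(\sqrt{1+t}\pm1)$ together with the inequalities above.
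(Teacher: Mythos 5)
Your route is the paper's own: the corollary is obtained by concretizing Theorem \ref{C_1_1} with $\beta=n$, $k=2/\omega_n$, after observing via (\ref{Eq_8.2A}) that $x_n^{n\alpha-1}v$ is the generalized Poisson integral (\ref{EH_1}) of the boundary trace of $v$. The specialization of the $x_n$-power, of the thresholds $\sqrt{n+1}/(\sqrt{n+1}\pm1)$, of the constant $2n|1-\alpha|/\omega_n$, and the transfer of the $\partial/\partial x_n$ sharpness claim are all fine.

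The gap is in the one step you assert without carrying out: that substituting $\alpha=1$ into
$$
C_{\alpha,n,1}=\frac{2n}{\omega_n}\Big(\frac{n}{2\alpha-1}\Big)^{n/2}\Big(\frac{\alpha^2}{n+1}\Big)^{(n+2)/2}
$$
and ``collecting the powers of $n-2$, $n-1$, $n$ and $n+1$'' yields the displayed $C_{1,n,1}$. It does not: at $\alpha=1$ the formula collapses to $\frac{2n}{\omega_n}\,n^{n/2}(n+1)^{-(n+2)/2}$, which contains no factors $n-1$ or $n-2$ and cannot equal $\frac{2(n-2)}{n\omega_n}\big\{(n-1)^2/((n-2)(n+1))\big\}^{(n+2)/2}$. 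For $n=3$ the former is $\tfrac{9\sqrt{3}}{16\,\omega_3}\approx 0.974/\omega_3$ while the latter is $\tfrac{2}{3\omega_3}\approx 0.667/\omega_3$. (One can check that the displayed ``In particular'' value is what the general formula produces at $\alpha=(n-1)/n$, not at $\alpha=1$; so the corollary is internally inconsistent, and doing the algebra you waved at would have exposed this.) A second issue you inherit by quoting (\ref{EH_4AA}) verbatim: maximizing $f(t)=(\alpha^2+(1-2\alpha)t^2)^{1/2}t^{\beta}$ at $t_1^2=\alpha^2\beta/((2\alpha-1)(1+\beta))$ gives $f(t_1)=(\beta/(2\alpha-1))^{\beta/2}(\alpha^2/(1+\beta))^{(\beta+1)/2}$, i.e.\ exponent $(\beta+1)/2$ rather than $(\beta+2)/2$; a direct check for $\alpha=1$, $\beta=n=3$ gives $\sup_t\sqrt{1-t^2}\,t^3=3\sqrt{3}/16$, consistent with $(\beta+1)/2$ and not with $(\beta+2)/2$. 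So the intermediate-range formula you are specializing needs to be re-derived rather than copied, and the $\alpha=1$ constant recomputed from it.
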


Theorem \ref{C_2_2} in the case $\beta=n$ leads to the following
 
\begin{corollary} \label{C_8.3} Let $v \in h^2({\mathbb R}^n_+)$
and let  $x $ be  an arbitrary point in ${\mathbb R}^{n} _+$. 
The sharp coefficient ${\mathcal C}_{\alpha, n, 2} (x)$ in the inequality
\begin{equation} \label{Eq_8.15}
\left |\nabla \big ( x_n^{n\alpha - 1}v(x)\big )\right |\leq {\mathcal C}_{\alpha, n, 2}( x)\big|\!\big |v \big |\!\big |_2
\end{equation}
is given by
$$
{\mathcal C}_{\alpha, n, 2}( x)= C_{\alpha, n, 2}\;x_{n}^{n\alpha -(n+3)/2}\;,
$$
where
$$
C_{\alpha, n, 2}=\left \{ \frac{n}{2^{n-2}\omega_n}
\left (  n\alpha^2  -(n+1)\alpha+ \frac{n+3}{4}\right )\right \}^{1/2}
$$
if
\begin{equation} \label{Eq_8.18}
0\leq\alpha\leq\frac{1}{2}\;\;\;{\rm or}\;\;\;\alpha\geq\frac{1}{2}+\frac{1}{n}\;,
\end{equation}
and
$$
C_{\alpha, n, 2}=\left \{ \frac{n}{2^{n}\omega_n }\right \}^{1/2}
$$
if 
$$
\frac{1}{2}<\alpha <\frac{1}{2}+\frac{1}{n}\;.
$$

In particular,
$$
C_{1, n, 2}=\left \{ \frac{n(n-1)}{2^{n}\omega_n }\right \}^{1/2}\;.
$$

If $\alpha$ satisfies condition $(\ref{Eq_8.18})$, 
then the coefficient ${\mathcal C}_{\alpha, n, 2}( x)$ is sharp in conditions of the Corollary also in
the weaker inequality obtained from $(\ref{Eq_8.15})$ by replacing $\nabla \big ( x_n^{n\alpha - 1}v)\big )$ by
$\partial \big ( x_n^{n\alpha - 1}v)\big )/\partial x_n$.
\end{corollary}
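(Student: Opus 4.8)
The plan is to obtain Corollary \ref{C_8.3} by specializing Theorem \ref{C_2_2} to the parameters $k=2/\omega_n$ and $\beta=n$. Indeed, the identity (\ref{Eq_8.2A}) exhibits $x_n^{n\alpha-1}v(x)$ as exactly the generalized Poisson integral (\ref{EH_1}) with these $k,\beta$ and boundary function $v\in L^2(\mathbb R^{n-1})$. First I would check the admissibility condition (\ref{EQU_1}) for $p=2$, namely $\beta>(n-1)/2$; with $\beta=n>2$ this is automatic, as are $2\beta+1-n=n+1>0$ and $\beta>n-1$. So Theorem \ref{C_2_2} applies verbatim to $u=x_n^{n\alpha-1}v$, giving ${\mathcal C}_{\alpha,n,2}(x)=C_{\alpha,n,2}\,x_n^{-[\,n(1-\alpha)+(3-n)/2\,]}$; rewriting the exponent as $n(1-\alpha)+(3-n)/2=(n+3)/2-n\alpha$ yields the stated form ${\mathcal C}_{\alpha,n,2}(x)=C_{\alpha,n,2}\,x_n^{n\alpha-(n+3)/2}$.

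Next I would evaluate the thresholds. Substituting $\beta=n$ into the formulas for $\alpha_1,\alpha_2$ in Theorem \ref{C_2_2}: here $1+\beta=n+1$, $2\beta+1-n=n+1$, $\beta+1-n=1$, so the radical collapses to $\sqrt{(n+1)^2}=n+1$ and
\[
\alpha_1=\frac{(n+1)^2-(n+1)}{2n(n+1)}=\frac12,\qquad
\alpha_2=\frac{(n+1)^2+(n+1)}{2n(n+1)}=\frac12+\frac1n .
\]
Since $\beta=n>n-1$, Corollary \ref{C_8.3} is precisely the $\beta>n-1$ dichotomy of Theorem \ref{C_2_2}: condition (\ref{Eq_8.18}) corresponds to formula (\ref{EH_2_2.3}), its complement to formula (\ref{EH_2_2.6}).

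Then I would carry out the arithmetic. In (\ref{EH_2_2.3}) with $\beta=n$ one has $2\beta+3-n=n+3$ and the bracketed quantity equals $2n\alpha^2-2(n+1)\alpha+\tfrac{n+3}2=2\bigl(n\alpha^2-(n+1)\alpha+\tfrac{n+3}4\bigr)$, while $|k|\beta=2n/\omega_n$; similarly the prefactor in (\ref{EH_2_2.6}) involves the same Gamma quotient with an extra $1/2$. Pulling out the polynomial, the remaining task is the numerical identity
\[
\Bigl(\tfrac{2n}{\omega_n}\Bigr)^{2}\frac{2\pi^{(n-1)/2}\Gamma\!\bigl(\tfrac{n+3}2\bigr)}{\Gamma(n+2)}=\frac{n}{2^{\,n-2}\omega_n},
\]
together with its counterpart for the middle interval giving $C_{\alpha,n,2}=\{n/(2^n\omega_n)\}^{1/2}$. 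Using $\omega_n=2\pi^{n/2}/\Gamma(n/2)$, both reduce to $2^{n}n\,\Gamma\!\bigl(\tfrac{n+3}2\bigr)\Gamma\!\bigl(\tfrac n2\bigr)=\sqrt\pi\,\Gamma(n+2)$, which follows from $\Gamma\!\bigl(\tfrac{n+3}2\bigr)=\tfrac{n+1}2\Gamma\!\bigl(\tfrac{n+1}2\bigr)$ and the Legendre duplication formula $\Gamma\!\bigl(\tfrac n2\bigr)\Gamma\!\bigl(\tfrac{n+1}2\bigr)=2^{1-n}\sqrt\pi\,\Gamma(n)$, since then the left side becomes $2^{n-1}n(n+1)\cdot 2^{1-n}\sqrt\pi\,\Gamma(n)=\sqrt\pi\,\Gamma(n+2)$. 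The particular value $C_{1,n,2}$ comes from putting $\alpha=1$ in the first formula (valid since $1\ge\tfrac12+\tfrac1n$ for $n>2$) and noting $n-(n+1)+\tfrac{n+3}4=\tfrac{n-1}4$. Finally, the sharpness assertion for $\partial(x_n^{n\alpha-1}v)/\partial x_n$ is inherited: for $\alpha$ as in (\ref{Eq_8.18}) we are in case (ii) of Theorem \ref{C_2_2}, where the supremum over $\gamma$ is attained at $\gamma=0$, i.e.\ $C_{\alpha,n,2}=C_{\alpha,n,2}(\bs e_n)$.

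I do not expect a genuine obstacle: Corollary \ref{C_8.3} is a pure specialization of Theorem \ref{C_2_2}. The only delicate point is the Gamma-function bookkeeping — in particular the factor $2$ between the bracket appearing in (\ref{EH_2_2.3}) and the polynomial $n\alpha^2-(n+1)\alpha+\tfrac{n+3}4$ displayed in the statement, and orienting the duplication formula correctly.
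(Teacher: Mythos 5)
Your proposal is correct and follows exactly the paper's route: the paper presents Corollary \ref{C_8.3} as an immediate specialization of Theorem \ref{C_2_2} with $\beta=n$ and $k=2/\omega_n$, and your computations of $\alpha_1=\tfrac12$, $\alpha_2=\tfrac12+\tfrac1n$, the exponent $n\alpha-(n+3)/2$, and the Gamma-function identity $2^{n}n\,\Gamma(\tfrac{n+3}{2})\Gamma(\tfrac n2)=\sqrt{\pi}\,\Gamma(n+2)$ all check out. You have simply supplied the arithmetic the paper leaves implicit.
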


As consequence of Theorem \ref{SP_2AAA} with $\beta=n$ we obtain

\begin{corollary} \label{C_8.4} Let $v \in h^\infty({\mathbb R}^n_+)$
and let  $x $ be  an arbitrary point in ${\mathbb R}^n _+$. Let $\alpha_n$ be the root from the interval $(1, +\infty )$ 
of the equation  
$$
\frac{2}{\pi(n\alpha -1 )}=\frac{\alpha-1}{1+\sqrt{1+(\alpha-1)^2}}
$$
with respect to $\alpha$. 

If $\alpha \geq \alpha_n$, then the sharp coefficient ${\mathcal C}_{\alpha, n, \infty} (x)$ in the inequality
$$
\left |\nabla \big ( x_n^{n\alpha - 1}v(x)\big )\right |\leq {\mathcal C}_{\alpha, n, \infty}( x)\big|\!\big |v \big |\!\big |_\infty
$$
is given by
$$
{\mathcal C}_{\alpha, n, \infty}( x)=(n\alpha - 1)\;x_{n}^{n\alpha -2}\;.
$$

In conditions of the Corollary, absolute value of the derivative of $ x_n^{n\alpha - 1}v$
with respect to the normal to the boundary of the half-space at any
$x \in {\mathbb R}^n_+$ has the same supremum as $|\nabla \big ( x_n^{n\alpha - 1}v\big )|$.
\end{corollary}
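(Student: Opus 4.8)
The plan is to derive Corollary~\ref{C_8.4} as the $\beta=n$, $k=2/\omega_n$ specialization of Theorem~\ref{SP_2AAA}. First I would note that multiplying the Poisson representation (\ref{Eq_8.1}) of $v\in h^\infty({\mathbb R}^n_+)$ by $x_n^{n\alpha-1}$ yields, exactly as in (\ref{Eq_8.2A}), the generalized Poisson integral (\ref{EH_1}) with kernel parameters $\alpha$ (the very $\alpha$ appearing in the Corollary), $\beta=n$, constant $k=2/\omega_n$, and density $f=v|_{x_n=0}\in L^\infty({\mathbb R}^{n-1})$. Since $\beta=n>n-1$, condition (\ref{EQU_1}) holds with $p=\infty$, so Theorem~\ref{SP_2AAA} applies to $u(x)=x_n^{n\alpha-1}v(x)$ whenever $\alpha\geq\alpha_n(n)$.

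Next I would substitute $\beta=n$ into the transcendental equation (\ref{EH_XBB}). Using $\Gamma\!\left(\tfrac{\beta-n}{2}+1\right)=\Gamma(1)=1$, $\Gamma\!\left(\tfrac{\beta-n+1}{2}\right)=\Gamma(\tfrac12)=\sqrt{\pi}$, and $\beta(\alpha-1)+n-1=n\alpha-1$, the left-hand side of (\ref{EH_XBB}) collapses to $2/(\pi(n\alpha-1))$, so the equation defining $\alpha_n(n)$ becomes precisely
$$
\frac{2}{\pi(n\alpha-1)}=\frac{\alpha-1}{1+\sqrt{1+(\alpha-1)^2}}\;,
$$
which is the equation for $\alpha_n$ in the Corollary. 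The existence and uniqueness of its root in $(1,+\infty)$ were already proved inside the proof of Theorem~\ref{SP_2AAA}, so $\alpha_n=\alpha_n(n)$ is well defined, and in particular $\alpha_n>1$.

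Then I would simplify the exponent in (\ref{EH_YAA}) and the constant (\ref{EH_Y001AA}) under $\beta=n$, $k=2/\omega_n$. The exponent $2-n+\beta(1-\alpha)$ becomes $2-n\alpha$, hence ${\mathcal C}_{\alpha,n,\infty}(x)=C_{\alpha,n,\infty}\,x_n^{n\alpha-2}$; and, since $\Gamma\!\left(\tfrac{\beta-n+1}{2}\right)=\sqrt{\pi}$ and $(\alpha-1)\beta+n-1=n\alpha-1$,
$$
C_{\alpha,n,\infty}=\frac{2}{\omega_n}\cdot\frac{\pi^{(n-1)/2}\sqrt{\pi}}{\Gamma(n/2)}\,(n\alpha-1)=\frac{1}{\omega_n}\cdot\frac{2\pi^{n/2}}{\Gamma(n/2)}\,(n\alpha-1)=n\alpha-1\;,
$$
where the last equality uses $\omega_n=2\pi^{n/2}/\Gamma(n/2)$. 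This gives ${\mathcal C}_{\alpha,n,\infty}(x)=(n\alpha-1)\,x_n^{n\alpha-2}$, as asserted. The final claim, that the modulus of the derivative of $x_n^{n\alpha-1}v$ along the inner normal $\bs e_n$ has the same supremum as $|\nabla(x_n^{n\alpha-1}v)|$ at every $x\in{\mathbb R}^n_+$, is inherited verbatim from the corresponding conclusion of Theorem~\ref{SP_2AAA}, valid there precisely because $\alpha\geq\alpha_n(\beta)$. I do not expect any genuine obstacle here beyond bookkeeping: the only points requiring a moment's care are verifying that the Gamma-function factors collapse as above and checking that $n\alpha-1>0$ on the admissible range, which is immediate from $\alpha\geq\alpha_n>1$.
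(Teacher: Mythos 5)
Your proposal is correct and follows exactly the paper's route: the paper derives Corollary~\ref{C_8.4} simply by specializing Theorem~\ref{SP_2AAA} to $\beta=n$, $k=2/\omega_n$ via the representation (\ref{Eq_8.2A}), and your Gamma-function simplifications ($\Gamma(1)=1$, $\Gamma(1/2)=\sqrt{\pi}$, $\omega_n=2\pi^{n/2}/\Gamma(n/2)$) correctly reduce the transcendental equation, the exponent, and the constant to the stated forms. No gaps.
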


Theorem \ref{SP_2} with $\beta=n$ implies

\begin{corollary} \label{C_8.5} Let $v \in h^\infty({\mathbb R}^n_+)$
and let  $x $ be  an arbitrary point in ${\mathbb R}^n _+$.
The sharp coefficient ${\mathcal C}_{1, n, \infty}(x)$ in the inequality
$$
\left |\nabla \big ( x_n^{n - 1}v(x)\big )\right |\leq {\mathcal C}_{1, n, \infty}( x)\big|\!\big |v \big |\!\big |_\infty
$$
is given by
$$
{\mathcal C}_{1, n, \infty}( x)=(n-1) x_n^{n-2}\;.
$$

The absolute value of the derivative of $ x_n^{n - 1}v(x)$ with respect to the normal to the boundary of 
the half-space at any $x \in {\mathbb R}^n_+$ has the same supremum as $\left |\nabla \big ( x_n^{n - 1}v(x)\big )\right |$.
\end{corollary}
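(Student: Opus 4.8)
The plan is to recognize $x_n^{n-1}v(x)$ as a generalized Poisson integral of type (\ref{EI_1}) and then to specialize Theorem \ref{SP_2}. First I would multiply the Poisson representation (\ref{Eq_8.1}) by $x_n^{n-1}$; as in (\ref{Eq_8.2A}) with $\alpha=1$ this gives
$$x_n^{n-1}v(x)=\frac{2}{\omega_n}\int_{{\mathbb R}^{n-1}}\left(\frac{x_n}{|y-x|}\right)^n v(y')\,dy',$$
which is exactly (\ref{EI_1}) with the parameter choice $k=2/\omega_n$, $\alpha=1$, $\beta=n$. Since $v\in h^\infty({\mathbb R}^n_+)$, the boundary datum $f=v|_{x_n=0}$ belongs to $L^\infty({\mathbb R}^{n-1})$; the admissibility restriction (\ref{EQU_1}) for $p=\infty$ reads $\beta>n-1$, which holds since $n>n-1$, and $\beta=n$ is the right endpoint of the interval $(n-1,n]$ appearing in Theorem \ref{SP_2}, so that theorem applies.

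Next I would invoke Theorem \ref{SP_2} with $\alpha=1$, $\beta=n$. Formula (\ref{EH_Y}) then yields
$$\mathcal{C}_{1,n,\infty}(x)=\frac{|k|\,\pi^{(n-1)/2}(n-1)\,\Gamma\!\left(\tfrac12\right)}{\Gamma\!\left(\tfrac{n}{2}\right)}\,x_n^{n-2},$$
using $\tfrac{\beta-n+1}{2}=\tfrac12$ at $\beta=n$. It remains to collapse the constant: $\Gamma\!\left(\tfrac12\right)=\sqrt{\pi}$ and, by $\omega_n=2\pi^{n/2}/\Gamma(n/2)$, we have $|k|=2/\omega_n=\Gamma(n/2)/\pi^{n/2}$; substituting, the factor $\Gamma(n/2)$ cancels and $\pi^{(n-1)/2}\,\pi^{1/2}/\pi^{n/2}=1$, so that $\mathcal{C}_{1,n,\infty}(x)=(n-1)x_n^{n-2}$, the asserted value.

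Finally, the assertion that the modulus of the normal derivative $\partial(x_n^{n-1}v)/\partial x_n$ has the same supremum as $|\nabla(x_n^{n-1}v)|$ is inherited directly from the last sentence of Theorem \ref{SP_2}: there the supremum over $\gamma\ge0$ in (\ref{eqn_009A}) is attained at $\gamma=0$, and since $\gamma=|\bs z'|/z_n$ this forces the extremal direction $\bs z$ to coincide with $\bs e_n$. There is no genuine obstacle here; the only care needed is the parameter bookkeeping — verifying that the hypotheses of Theorem \ref{SP_2} are met at the endpoint $\beta=n$ — together with the elementary simplification of the $\Gamma$-factors and $\omega_n$ into the clean constant $n-1$.
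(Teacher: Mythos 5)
Your proposal is correct and follows exactly the paper's route: the paper obtains Corollary \ref{C_8.5} by specializing Theorem \ref{SP_2} to $\beta=n$ with $k=2/\omega_n$ via the representation (\ref{Eq_8.2A}), and your simplification of the $\Gamma$-factors (using $\Gamma(1/2)=\sqrt{\pi}$ and $\omega_n=2\pi^{n/2}/\Gamma(n/2)$) to arrive at the constant $n-1$ is the intended computation. The verification that $\beta=n$ lies in the admissible interval $(n-1,n]$ and the transfer of the normal-derivative statement from Theorem \ref{SP_2} are likewise exactly what the paper's one-line derivation relies on.
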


\medskip
We conclude this section by some remark. The following representation is well known (e.g. Schot \cite{SCH}) 
\begin{equation} \label{EQ_4IA}
w(x)=\frac{2 n}{\omega_n}\int_{{\mathbb R}^{n-1}}\frac{x^3_n}{|y-x|^{n+2}}f_0(y')dy'+ 
\frac{2}{\omega_n}\int_{{\mathbb R}^{n-1}} \frac{x^2_n}{|y-x|^{n}}f_1(y')dy'
\end{equation}
for solution in ${\mathbb R}^n_+$ of the first boundary value problem for biharmonic equation
\begin{equation} \label{EQ_3IA}
\Delta ^2 w=0\;\;in\;\;{\mathbb R}^n_+\;,\;\;\;w\big |_{x_n=0}=f_0(x'),\;\;\;\frac{\partial w}{\partial x_n}\Big |_{x_n=0}=f_1(x')\;,
\end{equation}
where $y=(y', 0)$.

By $w_0$ we denote a solution of the problem (\ref{EQ_3IA}) with $f_0=0$. By (\ref{EQ_4IA}), we have
\begin{equation} \label{EQ_4IAAA}
x_n^{n\alpha - 2} w_0(x)=\frac{2}{\omega_n}\int_{{\mathbb R}^{n-1}} \frac{x_n^{n\alpha}}{|y-x|^{n}}f_1(y')dy'\;.
\end{equation}
The right-hand side of (\ref{EQ_4IAAA}) is the same as in (\ref{Eq_8.2A}). So, by Proposition \ref{P_1} we arrive at
\begin{equation} \label{EI_8FFF}
\left |\nabla \big ( x_n^{n\alpha - 2}w_0(x)\big )\right |\leq C_{\alpha, n, p}x_n^{n\alpha-2-(n-1)/p}\left |\!\left |
\frac{\partial w_0}{\partial x_n} \right |\!\right |_p\;,
\end{equation}
where the sharp constant $C_{\alpha, n, p}$ in (\ref{EI_8FFF}) is the same as in Corollaries \ref{C_8.1}-\ref{C_8.5}. 

For instance, in the case $\alpha=0, p=\infty$ by Corollary \ref{C_8.1} we have the following inequality with the sharp coefficient
$$
\left |\nabla \left \{ \frac{w_0 (x)}{x_n^2} \right \}\right |
\leq \frac{1}{x_n^2} \;\left |\!\left |
\frac{\partial w_0}{\partial x_n} \right |\!\right |_\infty\;.
$$
In other case, for $\alpha=1$ and $p=\infty$, Corollary \ref{C_8.5} leads to the next inequality
$$
\left |\nabla \big ( x_n^{n - 2}w_0(x)\big )\right |\leq (n-1) x_n^{n-2}\left |\!\left |
\frac{\partial w_0}{\partial x_n} \right |\!\right |_\infty\;.
$$

%%%%%%%%%%%%%%%%%%%%%%%%%%%%%%%%%%%%%%%%%%%%%%%%%%%%%%%%%%%%%%%%%

\end{document}